\newtheorem{theorem}{Theorem}[section]
\newtheorem{lemma}[theorem]{Lemma}
\newtheorem{corollary}[theorem]{Corollary}
\newtheorem{fact}[theorem]{Fact}
\newtheorem{proposition}[theorem]{Proposition}
\newtheorem*{thmA}{Theorem A}
\newtheorem*{thmAtypes}{Theorem A, reformulated}
\newtheorem*{thmB}{Theorem B}
\theoremstyle{definition}
\newtheorem{example}[theorem]{Example}
\newtheorem{remark}[theorem]{Remark}
\newtheorem*{definitionintro}{Definition}
\newtheorem{question}{Question}
\def\dcl{\operatorname{dcl}}
\def\acl{\operatorname{acl}}
\def\alg{\operatorname{alg}}
\def\tp{\operatorname{tp}}
\def\id{\operatorname{id}}
\def\Aut{\operatorname{Aut}}
\def\Fix{\operatorname{Fix}}
\def\U{\mathcal{U}}
\def\C{\mathcal{C}}
\def\K{\mathcal{K}}
\def\logd{\operatorname{log}_\delta}
\def\Ga{\mathbb G_{\operatorname{a}}}
\def\Gm{\mathbb G_{\operatorname{m}}}
\def\dx{\frac{d}{dx}}
\def\Ind#1#2{#1\setbox0=\hbox{$#1x$}\kern\wd0\hbox to 0pt{\hss$#1\mid$\hss}
\lower.9\ht0\hbox to 0pt{\hss$#1\smile$\hss}\kern\wd0}
\def\ind{\mathop{\mathpalette\Ind{}}}
\def\Notind#1#2{#1\setbox0=\hbox{$#1x$}\kern\wd0\hbox to 0pt{\mathchardef
\nn=12854\hss$#1\nn$\kern1.4\wd0\hss}\hbox to
0pt{\hss$#1\mid$\hss}\lower.9\ht0 \hbox to
0pt{\hss$#1\smile$\hss}\kern\wd0}
\def\nind{\mathop{\mathpalette\Notind{}}}
\begin{document}

\title[Logarithmic-differential pullbacks]{Internality of\\ logarithmic-differential pullbacks}

\author{Ruizhang Jin}
\address{Ruizhang Jin\\
University of Waterloo\\
Department of Pure Mathematics\\
200 University Avenue West\\
Waterloo, Ontario \  N2L 3G1\\
Canada}
\email{r6jin@uwaterloo.ca}

\author{Rahim Moosa}
\address{Rahim Moosa\\
University of Waterloo\\
Department of Pure Mathematics\\
200 University Avenue West\\
Waterloo, Ontario \  N2L 3G1\\
Canada}
\email{rmoosa@uwaterloo.ca}

\date{\today}

\thanks{2010 {\em Mathematics Subject Classification}: 03C98, 12H05.}
\thanks{{\em Keywords}: logarithmic derivative, internality to the constants, differentially closed fields}
\thanks{{\em Acknowledgements}: R. Moosa was partially supported by NSERC Discovery  and DAS grants.}

\begin{abstract}
A criterion in the spirit of Rosenlicht is given, on the rational function $f(x)$, for when the planar vector field
$
\begin{Bmatrix}
y'=xy\\
x'=f(x)
\end{Bmatrix}
$
admits a pair of algebraically independent first integrals over some extension of the base field.
This proceeds from model-theoretic considerations by working in the theory of differentially closed fields of characteristic zero and asking: If $D\subseteq\mathbb A^1$ is a strongly minimal set internal to the constants, when is $\logd^{-1}(D)$, the pullback of $D$ under the logarithmic derivative, itself internal to the constants?
\end{abstract}

\maketitle

\setcounter{tocdepth}{1}
\tableofcontents

\section{Introduction}

\noindent
This paper is concerned with systems of differential equations of the form
\begin{equation}
\label{logd}
\begin{Bmatrix}
y'=xy\\
x'=f(x)
\end{Bmatrix}
\end{equation}
where $f$ is a rational function over a base differential field\footnote{All fields in this paper are of characteristic zero.} $(F,\delta)$.
Note that when the base is a field of constants, namely $\delta$ is trivial on $F$, these equations define a rational vector field on the plane.
Indeed, we will be applying our results to precisely that context.
But we are interested more generally in the case of a possibly nonconstant base field.
Note, also, that at least outside of $y=0$, the system~(\ref{logd}) can be expressed as the single second-order differential equation
$\displaystyle \left(\frac{y'}{y}\right)'=f\left(\frac{y'}{y}\right)$.
In other words, we are looking at the pullback of $x'=f(x)$ under the {\em logarithmic derivative} operator $y\mapsto \displaystyle\frac{y'}{y}$.

Other classes of second-order algebraic differential equations that have been the object of model-theoretic study in recent years include Painlev\'e equations in~\cite{np14, np17}, generic planar vector fields in~\cite{jaoui19}, and the twisted $D$-groups of~\cite[Example~3.4]{blsm18}.
A central feature of Painlev\'e equations and generic planar vector fields is that there is little structure induced on the solution space, whereas the system~(\ref{logd}) that we study here, like that of twisted $D$-groups, admits a lot of structure.\footnote{This will become clearer as we go along, but for the model theorist, what we have in mind here is disintegratedness versus nonorthogonality to the constants.}

What we ask about logarithmic-differential pullbacks is motivated by both model theory and differential-algebraic geometry:
\begin{question}
\label{question}
When is~(\ref{logd}) {\em almost internal} to the constants?
\end{question}
\noindent
We will review what this means formally in Section~\ref{sect-prelim}, but roughly speaking, to say that ~(\ref{logd}) is almost internal to the constants is to say that it admits two algebraically independent first integrals after  base change.
Recall that a {\em first integral} to a vector field $(\mathbb A^2,s)$ is a nonconstant rational function that is constant on the leaves of the corresponding foliation.
Equivalently, it is a nonconstant rational function on which the derivation induced by $s$ vanishes. 
So Question~1 asks: when does it happens that, over some differential field $(K,\delta)$ extending $(F,\delta)$, there exists a pair of algebraically independent rational functions in $K(x,y)$ that are constant for the (unique) derivation extending~$\delta$ on $K$ and taking $(x,y)\mapsto \big(f(x),xy\big)$.

Of course one should ask this question first about the single equation $x'=f(x)$.
When is $x'=f(x)$ almost internal to the constants?
When does it admit a first integral after base change?
In the case that $F$ is a field of constants the answer to this question is given by an old theorem of Rosenlicht (see~\cite[Proposition 2]{rosenlicht}): it is almost internal to the constants if and only if $f=0$ or
$\displaystyle \frac{1}{f}=\frac{d}{dx}(g)$, or
$\displaystyle \frac{1}{f}=\frac{c\frac{d}{dx}(g)}{g}$, for some $c\in F$ and $g\in F(x)$.
However, no extension of Rosenlicht's criterion to the case of nonconstant parameters is known -- the naive generalisations certainly fail.
So at this level of generality we will have to impose as an additional condition that $x'=f(x)$ is almost internal to the constants.
Or even that it is {\em internal} to the constants -- that after base change the rational function field is generated by a first integral.
Then we take the logarithmic-differential pullback of $x'=f(x)$, namely the system~(\ref{logd}), and ask when it too is almost internal to the constants.

An important example is $f=0$.
In that case~(\ref{logd}) defines a differential-algebraic group which extends the constant points of the multiplicative group by the constant points of the additive group.
It is not almost internal to the constants.
We do have one first integral, namely the rational function $x$ itself.
Moreover, each fibre $y'=cy$ has a first integral after we extend the base to a solution: if $\gamma$ is nonzero and satisfies $\gamma'=c\gamma$ then the rational function $\frac{y}{\gamma}$ is a first integral to $y'=cy$ over $F(\gamma)$.
(In model-theoretic parlance, this says that the system~(\ref{logd}) is {\em analysable} in the constants.)
But these first integrals on the fibres cannot be put together, and we do not obtain an algebraically independent pair of first integrals over any differential field.
An argument for this well known fact can be found in~\cite[Fact~4.2]{chtm}.
From the $f=0$ case one can deduce the same result for $f=\gamma$ where $\gamma\in F$.

Consider next the example $f(x)=x$.
Then, again, the logarithmic-differential pullback~(\ref{logd}) is not almost internal to the constants.
This is~\cite[Proposition~3.2]{jin2018}.
There is a first integral (namely, $\frac{x}{\gamma}$ where $\gamma'=\gamma$ is nonzero) but not a pair of algebraically independent first integrals.

Finally, consider $f(x)=x^2$.
Then $\frac{1}{x}+t$ is a first integral if $t'=1$.
On the other hand, $\frac{x}{y}$ is also a first integral to~(\ref{logd}).
Hence, in this case, the logarithmic-differential pullback {\em is} almost internal to the constants.
Note that $x'=x^2$ is differential-birationally isomorphic to $x'=1$ by the change of variable $x\mapsto\frac{-1}{x}$, but the latter has logarithmic-differential pullback {\em not} internal to the constants.
So, while almost internality to the constants is a differential-birational invariant of~(\ref{logd}), it is not an invariant of $x'=f(x)$.
The answer to Question~1 will have to be sensitive to the specific rational function $f$.

Our goal is to explain the different behaviour exhibited in the above examples.
Observe that when $f(x)=x^2$, the system~(\ref{logd}) is differential-birationally isomorphic to a product of first-order equations; namely, the change of variables $y\mapsto\frac{x}{y}$ yields
$\begin{Bmatrix}
y'=0\\
x'=x^2
\end{Bmatrix}.$
We say that the system~(\ref{logd}) {\em splits}.
A precise definition appears in the next section, but roughly speaking, it means that after a finite-to-finite differential-rational correspondence the system transforms into one of the form
$\begin{Bmatrix}
y'=\gamma y\\
x'=f(x)
\end{Bmatrix}$
for some $\gamma\in F$.
It is not hard to verify that the other examples we have considered (when $f$ is either constant or $x$) do not split.
It was at first expected that this was typical; that if a logarithmic-differential pullback is almost internal to the constants then it is for the concrete reason that it splits, see~\cite[Conjecture~5.4]{jinthesis}.
This turns out not to be entirely correct.

In fact, Question~\ref{question} depends sharply on a subtle model-theoretic invariant of the equation $x'=f(x)$ called its {\em binding group}.
This is a certain algebraic group, acting differential-rationally on the generic solutions to the equation, that generalises the differential Galois group in the case of linear ODE's.
Because the equation is first-order, it follows from general results that the dimension of the binding group is at most $3$.
We answer Question~1 in all but the top-dimensional case:

\begin{thmA}
Suppose $F$ is a differential field and $f\in F(x)$ is a rational function such that $x'=f(x)$ is internal to the constants with binding group not of dimension~$3$.
Then the logarithmic-differential pullback of $x'=f(x)$ is almost internal to the constants if and only if it splits.
\end{thmA}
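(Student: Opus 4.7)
I first dispose of the implication that splitting gives almost internality. If (logd) splits then, up to a finite-to-finite differential-rational correspondence, it is the product system $\{y'=\gamma y,\ x'=f(x)\}$ for some $\gamma\in F$. The first factor is internal to the constants --- adjoining any nonzero solution $\eta$ makes $y/\eta$ a constant --- and the second is internal to the constants by hypothesis. A product of types internal to the constants is internal to the constants, and almost internality is preserved under finite-to-finite correspondences, giving this direction.

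\textbf{Setup for the converse.} Assume $\tp(a,b/F)$ is almost internal to the constants, with $a$ a generic solution of $x'=f(x)$ over $F$ and $b'/b=a$. Let $G$ be the binding group of $\tp(a/F)$, an algebraic group over $\C$ of dimension $1$ or $2$ (dimension $0$ is excluded because the strongly minimal locus $D$ is infinite). The binding-group machinery applied to $\tp(a,b/F)$ yields an algebraic group $\widetilde G$ over $\C$ together with a surjection $\widetilde G \twoheadrightarrow G$ --- obtained by restricting the action to the $a$-coordinate --- whose kernel $N$ embeds into $\Gm$, since it acts on the fibres of $(a,b)\mapsto a$, which are $\Gm(\C)$-torsors.

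\textbf{Case analysis, descent, and the main obstacle.} If $N$ is finite then $b\in F\langle a\rangle^{\operatorname{alg}}$, and a direct calculation on a suitable power of $b$ produces the splitting. The main case is $N=\Gm$, yielding a central extension $1\to\Gm\to\widetilde G\to G\to 1$ of algebraic groups over $\C$. By the classification of faithful transitive algebraic group actions on a strongly minimal set, together with the hypothesis $\dim G\neq 3$, the group $G$ must be one of $\Ga$, $\Gm$, or the affine group $\Gm\ltimes\Ga$. For each of these I would verify that every central extension by $\Gm$ splits as algebraic groups --- standard for $\Ga$ and $\Gm$, and for the affine case extractable from the splitting of the Borel subgroup of $\operatorname{GL}_2$ as $\Gm\times(\Gm\ltimes\Ga)$. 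A section of such an extension, via the binding-group formalism, produces a $G$-equivariant algebraic trivialization of the torsor $\tp(a,b/F)\to\tp(a/F)$, and from this one extracts $\eta$ algebraic over $F\langle a\rangle$ with $\eta'/\eta=a-\gamma$ for some $\gamma\in F$, exactly the splitting data. The step I expect to be hardest is the descent: the binding groups live most naturally over a differential field extending $F$, so a Galois-cohomological argument controlled by the algebraic structure of $G$ is needed to bring the section down to the base $F$, and the scalar ambiguity inherent in that descent is precisely what produces the compensating $\gamma\in F$. The exclusion of $\dim G=3$ is essential, because the nontrivial central extension $\operatorname{GL}_2\to\operatorname{PGL}_2$ is exactly what would allow almost internality without splitting in that case.
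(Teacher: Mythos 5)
Your central-extensions strategy is genuinely different from the paper's proof, which proceeds via an explicit algebraic lemma (the key Proposition~\ref{w3}: from almost $\C$-internality of $\logd^{-1}(p)$, one extracts $w\in F\langle a_1,a_2\rangle$ and a nonzero integer $k$ with $\logd w=k(a_2-a_1)$) followed by careful, dimension-by-dimension polynomial manipulations to descend the splitting data to $F$. Your idea of realising the binding group of $\tp(a,b/F)$ as a central extension of $G$ by a subgroup of $\Gm$, and then invoking the vanishing of the relevant $\operatorname{Ext}$-groups, is conceptually appealing and could in principle shorten the argument. However, as written the proposal has genuine gaps.

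First, the assertion that ``dimension $0$ is excluded because the strongly minimal locus $D$ is infinite'' is wrong. A $0$-dimensional binding group corresponds to $p$ not being weakly $\C$-orthogonal, i.e.\ $a\in\acl(F,\C)$ for $a\models p$; this does happen for infinite strongly minimal $D$ (e.g.\ $\delta x=x^2$ over $\mathbb Q(t)^{\alg}$ with $\delta t=1$, where $\frac1a+t$ is a new constant), and the paper must and does treat it separately (Theorem~\ref{thm-dim0}). Your proof as stated simply omits a non-vacuous case.

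Second, in dimension $1$ the classification of faithful transitive actions only says $G$ acts regularly; it does not rule out $G$ being an elliptic curve. The paper excludes elliptic curves by a genus argument that uses the specific shape of the equation $x'=f(x)$ (the function field $K\langle a\rangle=K(a)$ has genus $0$, whereas the function field of an elliptic curve has genus $1$). Your list ``$\Ga$, $\Gm$, or $\Gm\ltimes\Ga$'' quietly assumes this exclusion without supplying the argument. Third, and most seriously, you yourself flag the descent as the hardest step and then do not carry it out: splitting the central extension of abstract algebraic groups does not by itself produce an element $w_2$ with $\logd w_2\in F$ --- the section lives a priori over an extension field, and showing that the associated first integral descends to $F$ (up to the compensating $\gamma$) is exactly where the real work lies. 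In the paper this is handled through Lemma~\ref{inc} (stable embeddedness) and explicit polynomial identities (Lemmas~\ref{polyadd},~\ref{polymult}), and there is no obvious short cut via Galois cohomology alone. As it stands the proposal is a plausible outline of an alternative route, but it is not a proof.
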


It is necessary that the binding group not have dimension $3$ in the following strong sense: If the binding group {\em is} of dimension three -- and this does happen as we verify in Example~\ref{psl2} below -- then $x'=f(x)$ is a finite cover of another first-order algebraic differential equation whose logarithmic-differential pullback is almost internal to the constants but does {\em not} split.
This ubiquity of counterexamples in dimension $3$ appears as Theorem~\ref{3dim} below.

Our proof of Theorem~A goes through a careful case analysis by dimension of the binding group, studying dimensions $0,1$, and $2$ in turn.
This is done in Section~\ref{sect-not3dim}, where the bulk of the work of this paper is done.
(There are examples showing each case to be non-vacuous, see in particular Example~\ref{exdim2} in dimension~$2$.)

When the base is a field of constants it is known that the binding group is at most $1$-dimensional, and so Theorem~A applies to answer Question~\ref{question} entirely.
In this case we give explicit Rosenlicht-type criteria on $f(x)$ for~(\ref{logd}) to be almost internal to the constants:

\begin{thmB}
Suppose $F$ is an algebraically closed field of constants and $f\in F(x)$.
The rational vector field defined by
$\begin{Bmatrix}
y'=xy\\
x'=f(x)
\end{Bmatrix}$
is almost internal to the constants if and only if the following conditions on $f$ are satisfied:
\begin{itemize}
\item[(i)]
$f\neq 0$, and
\item[(ii)]
$\displaystyle \frac{1}{f}=\frac{d}{dx}(g)$ or $\displaystyle \frac{1}{f}=\frac{c\frac{d}{dx}(g)}{g}$ for some $c\in F$ and $g\in F(x)$, and
\item[(iii)]
$\displaystyle \frac{kx-e}{f}=\frac{\frac{d}{dx}(h)}{h}$ for some nonzero $k\in\mathbb Z$, $e\in F$ and $h\in F(x)$.
\end{itemize}
\end{thmB}

Condition~(ii) is the Rosenlicht condition on $f$ that we saw before.
Condition~(iii) can be expressed as saying that the partial-fraction decomposition of $\frac{x-\frac{e}{k}}{f}$ must be of the form $\sum_{i=1}^\ell\frac{r_i}{x-c_i}$ for some rational numbers $r_i$ and constants $c_i\in F$.
In particular, writing $f=\frac{P}{Q}$ with $P,Q$ coprime polynomials, condition~(iii) forces $\deg P\geq\deg Q+~2$.
(It also forces $P$ to have at most one multiple root, namely $\frac{e}{k}$ with multiplicity~$2$.)
In particular, we see that while all the examples $f(x)=c,x,x^2$ considered above satisfy~(i) and~(ii), only $f(x)=x^2$ satisfies condition~(iii).
This explains why the logarithmic-differential pullback of $x'=x^2$ behaves differently from the others.

Let us end this introduction by speculating on a more general setting in which this work could be situated.
The logarithmic derivative is a special case of the {\em generalised Schwarzian derivative} in the sense of~\cite{scanlon18}, arising from an algebraic group acting on a curve.
In this case it is the action of the multiplicative group on itself.
Now, the only faithful transitive algebraic group actions on curves are the one-dimensional groups acting on themselves (of which $\Ga$ gives rise to the derivative and $\Gm$ gives rise to the logarithmic deriviative), $\Gm\ltimes\Ga$ acting on the affine line by affine linear transformations, and $\operatorname{PSL}_2$ acting on the projective line by projective transformations (which gives rise to the classical Schwarzian derivative).
In all cases, by construction, the fibres of the generalised Schwarzian derivative are internal to the constants.
It would be natural to ask, in each of these cases, when is the generalised-Schwarzian-differential pullback of $x'=f(x)$ almost internal to the constants?

\medskip
\noindent
{\em Acknowledgements}.
We are grateful to R\'emi Jaoui and Anand Pillay for several useful conversations on this topic.

\bigskip
\section{Splitting and a formulation for types}
\label{sect-splitting}

\noindent
We work throughout this paper in a fixed sufficiently saturated model $(\U,\delta)$ of the theory of differentially closed fields in characteristic zero.
This is a large differentially closed field that serves as a universal domain for differential-algebraic geometry.
All parameter sets are assumed to be of cardinality strictly less than $|\U|$ unless explicitly stated otherwise.
All algebraic and differential-algebraic varieties are identified with their $\U$-points.
{\em Definable} will always mean definable with parameters in $(\U,\delta)$.
The field of constants is denoted by $\C=\{x\in\U:\delta x=0\}$.

By the {\em logarithmic derivative} we then mean the definable function $\displaystyle \logd x=\frac{\delta x}{x}$.
We have the following short exact sequence of differential-algebraic group homomorphisms
$$\xymatrix{
0\ar[r]&\Gm(\C)\ar[r]&\Gm\ar[r]^{\logd}&\Ga\ar[r]&0}
$$
where $\Gm$ denotes the multiplicative group and $\Ga$ the additive group.

For the sake of both clarity and rigour, it is more convenient to work with logarithmic-differential pullbacks of types rather than equations.

\begin{definitionintro}[Logarithmic-differential pullback]
For $p\in S_1(A)$ a complete $1$-type over $A$, we denote by $\logd^{-1}(p)$ the type $\tp(u/A)\in S_1(A)$ where $u\in\Gm$ is such that $\logd u\models p$ and $u\notin\acl(A,\logd u)$.
\end{definitionintro}

That $\logd^{-1}(p)$ exists is because for fixed $a\models p$ the equation $\delta y=ay$ has a solution outside of $\acl(A,a)$ by the existential closedness and saturation of $(\U,\delta)$.
Moreover, $\logd^{-1}(p)$ does not depend on the choice of $a$.
Indeed, suppose $a'\models p$ and $u'\notin\acl(A,a')$ is such that $\logd u'=a'$.
Then there is $\sigma\in\Aut_A(\U)$ with $\sigma(a)=a'$.
Now $\logd\big(\sigma(u)\big)=\sigma(\logd u)=\sigma(a)=a'$ and $\sigma(u)\notin\acl(A,a')$.
So $u'$ and $\sigma(u)$ are both generic realisations of the strongly minimal formula $\delta y=a'y$ over $\acl(A,a')$, and hence there is $\tau\in\Aut_{Aa'}(\U)$ with $\tau(\sigma(u))=u'$.
In particular, $\tp(u/A)=\tp(u'/A)$.

We now make precise what we mean by {\em splitting} in Theorem~A.

\begin{definitionintro}[Splitting]
\label{splits}
For $p\in S_1(A)$ we say that the logarithmic-differential pullback {\em splits} if we can factor some nontrivial integer power of $u\models\logd^{-1} p$ in $\Gm$ as $u^k=w_1w_2$ where $w_1\in\dcl(A,\logd u)$ and $\logd w_2\in\dcl(A)$.
\end{definitionintro}

Assuming $p$ itself is almost $\C$-internal, we have that $\tp(w_1/A)$ is almost $\C$-internal as any realisation is in the definable closure of a realisation of $p$.
Also $\tp(w_2/A)$ is $\C$-internal as any two realisations, having the same logarithmic derivative, will have a constant ratio.
Hence, if $\logd^{-1}(p)$ splits then any realisation is in the algebraic closure of realisations of almost $\C$-internal types.
That is, if $\logd^{-1}(p)$ splits then it is almost $\C$-internal.
The question is whether splitting is the only way $\logd^{-1}(p)$ can be almost $\C$-internal.

We restate Theorem~A with these formalities in place.

\begin{thmAtypes}
Suppose $F$ is an algebraically closed differential field and $f\in F(x)$ is such that the generic type $p\in S_1(F)$ of $\delta x=f(x)$ is $\C$-internal.
Suppose the binding group of $p$ is not of dimension $3$.
Then $\logd^{-1}(p)$ is almost $\C$-internal if and only if it splits.
\end{thmAtypes}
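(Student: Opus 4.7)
The plan is to prove the nontrivial direction, that almost $\C$-internality of $\logd^{-1}(p)$ implies it splits, by a case analysis on the dimension of the binding group $G$ of $p$ over $F$. Since $p$ is strongly minimal, $\C$-internal, and defined from a first-order ODE, $G$ is an algebraic group of dimension at most $3$ acting faithfully and generically transitively on the realizations of $p$; by hypothesis $\dim G \in \{0,1,2\}$. The classification of faithful transitive algebraic-group actions on curves further restricts the possibilities: $G$ is finite in dimension~$0$; in dimension~$1$, $G$ is (up to isogeny) $\Ga$, $\Gm$, or an elliptic curve; and in dimension~$2$, $G = \Gm \ltimes \Ga$.

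The unifying idea is as follows. Almost $\C$-internality of $q := \logd^{-1}(p)$ gives a binding group $\tilde G$ fitting into a short exact sequence of algebraic groups
\[
1 \longrightarrow H \longrightarrow \tilde G \longrightarrow G \longrightarrow 1,
\]
where $H \leq \Gm(\C)$ arises from the fact that fibres of $\logd$ over a realization of $p$ are cosets of $\Gm(\C)$. The splitting condition of Definition~\ref{splits} corresponds, up to isogeny, to this algebraic-group extension splitting: a finite-index section $G \to \tilde G$ would provide coordinates that separate a generic $u \models q$ into a part $w_1 \in \dcl(F,\logd u)$ (coming from the image of $G$) and a part $w_2$ with $\logd w_2 \in F$ (coming from the $H$-coordinate). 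It therefore suffices, in each dimension, to show that the extension splits up to isogeny and then to translate this back into the concrete decomposition $u^k = w_1 w_2$.

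In dimension~$0$, $\tilde G$ is finite, and any extension of a finite group by a subgroup of $\Gm(\C)$ becomes trivial after passing to a finite-index subgroup. In dimension~$1$ with $G$ isogenous to $\Ga$ or $\Gm$, characteristic-zero algebraic-group cohomology yields no nontrivial extensions of $\Ga$ or $\Gm$ by (subgroups of) $\Gm$, so the extension splits outright; the elliptic-curve sub-case is more delicate, and requires exploiting the specific way the logarithmic-differential pullback produces $\tilde G$ to force the extension class to vanish up to isogeny (or to rule the sub-case out entirely on geometric grounds). Dimension~$2$ proceeds by filtering $\tilde G$ along the unipotent radical of $\Gm \ltimes \Ga$ and reducing twice to the dimension-$1$ splittings.

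The main technical obstacle in every case is faithfully translating the algebraic-group splitting back into the form of Definition~\ref{splits}, with the correct definable-closure control: namely, choosing a rational section of $\tilde G \to G$, evaluating at a generic $u$, and verifying that the resulting $w_1$ lies in $\dcl(F,\logd u)$ rather than only in $\dcl(F,\logd u,\C)$, and that $\logd w_2 \in \dcl(F)$. I expect the dimension-$2$ case to involve the bulk of the computation, as the non-abelian structure of $\Gm \ltimes \Ga$ complicates the section-picking. The dimension-$3$ $\operatorname{PSL}_2$ case is excluded precisely because extensions of $\operatorname{PSL}_2$ by $\Gm$ can fail to split even up to isogeny, which is what permits the counterexamples mentioned in the introduction.
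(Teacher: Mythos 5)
Your case split by $\dim G$ matches the paper's organization, but the heart of your proposal---translating an algebraic-group splitting of $1\to H\to\tilde G\to G\to 1$ (up to isogeny) into the concrete decomposition $u^k=w_1w_2$ that the definition of splitting requires---is exactly where the real work lies, and you leave it undone. You flag it yourself as ``the main technical obstacle,'' but that acknowledgement conceals the problem: a section $G\to\tilde G$ of algebraic groups gives a product decomposition of the \emph{group}, not of the orbit $q(\U)$, and by itself it says nothing about why the two components of $u^k$ should land in $\dcl(F,\logd u)$ (as opposed to merely $\dcl(F,\C,\logd u)$) and in $\{w:\logd w\in F\}$, respectively. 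This parameter control is precisely what the paper labours to establish by concrete rational-function arguments: Proposition~\ref{w3}, via the multiplicative-dependence Lemma~\ref{gu1u2}, extracts $w\in F\langle a_1,a_2\rangle$ with $\logd w=k(a_2-a_1)$; each dimension case then descends $w=g(a_1,a_2)$ to a one-variable $\delta$-rational $h$ over $F$ with $ka+\logd h(a)\in F$ (the cocycle identity $g_{1,2}g_{2,3}g_{3,1}=\pm 1$ forced by the sharply $2$-transitive affine action in dimension~$2$, Lemmas~\ref{polyadd}--\ref{polymult} and Proposition~\ref{aorm} in dimension~$1$, infinitude of $D(F)$ in dimension~$0$), with Lemma~\ref{inc} used at key points to push constant parameters down into $F$. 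Your outline has no substitute for that descent, and there is no reason to expect a purely group-cohomological argument to deliver it.

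Two further inaccuracies. The elliptic sub-case is not handled by ``forcing the extension class to vanish'': the paper rules it out outright by a genus argument that uses the hypothesis that $p$ is the generic type of $\delta x=f(x)$ (so $F\langle a\rangle=F(a)$ has genus $0$, incompatible with a generic point of an elliptic curve), and the authors state explicitly that absent this hypothesis they have neither a proof nor a counterexample. And the dimension-$3$ counterexamples of Theorem~\ref{3dim} rest on $\operatorname{PSL}_2(\C)$ having no nontrivial abelian \emph{quotient} (simplicity), not on extensions of $\operatorname{PSL}_2$ by $\Gm$ failing to split; your diagnosis points in the wrong direction.
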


It is this version that we prove in Section~\ref{sect-main} below.

Finally, let us remark that it might be natural to ask about arbitrary minimal $\C$-internal $1$-types  $p$ -- without insisting that it is the generic type of an equation of the form $\delta x=f(x)$.
In fact, the only way in which we use this equation in the proof of Theorem~A is to rule out (by genus considerations) the possibility of the binding group of $p$ being an elliptic curve.
Our methods do not seem to yield a proof, nor a counterexample, when the binding group is an elliptic curve.

\bigskip
\section{Model-theoretic preliminaries}
\label{sect-prelim}

\noindent
Our aim in this section is to briefly recall various relevant notions from stability theory around internality and orthogonality, and then to establish some either elementary or well-known facts about them that will be used later.
We stay in the setting of differentially closed fields, though everything discussed here makes sense in, and is true of, stable first-order theories in general.
We suggest~\cite{marker91} for an introduction to the model theory of differentially closed fields.

By a {\em minimal} type we mean a stationary complete type of $U$-rank one.

Suppose $p\in S(A)$ is a stationary type.
We say that $p$ is {\em $\C$-internal} if for some $B\supseteq A$ and $a\models p$ independent of $B$ over $A$, $a\in\dcl(B,\C)$.
Often one uses the following ``fundamental system of solutions" characterisation: for some $n<\omega$ there is a Morley sequence $(a_1,\dots,a_n)\models p^{(n)}$ and an $(A,a_1,\dots,a_n)$-definable function $g(x)$ such that for all $a\models p$ there is a tuple $c$ from $\C$ such that $a=g(c)$.
Here $p^{(n)}$ is the (unique) type of an $n$-tuple of independent realisations of $p$.
A key tool in studying $\C$-internal types is the {\em binding group} which we will denote by $\Aut_A(p/\C)$.
By definition this is the group of permutations of $p(\U)$ that are induced by automorphisms of $\U$ fixing $A\cup\C$ pointwise.
When $p$ is $\C$-internal, the binding group, along with its action on $p(\U)$, is $A$-definable.
In fact, over possibly additional parameters, $\Aut_A(p/\C)$ is definably isomorphic to the $\C$-points of an algebraic group over the constants.

\begin{lemma}
\label{dclliason}
Suppose $p\in S(A)$ is a stationary $\C$-internal type.
If $a\models p$ and $b\in \dcl(Aa)$ then $q:=\tp(b/A)$ is a stationary $\C$-internal type and there is a surjective $A$-definable homomorphism $\phi:\Aut_A(p/\C)\to\Aut_A(q/\C)$.
If $p$ is minimal and $b\notin\acl(A)$ then $\ker(\phi)$ is finite.
\end{lemma}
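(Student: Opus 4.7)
The plan is to address the three assertions in turn; the first two are largely formal, with the main substance residing in the finiteness of $\ker(\phi)$. Since $b \in \dcl(Aa)$, write $b = f(a)$ for some $A$-definable function $f$. For $\C$-internality of $q$: if $a \in \dcl(B\C)$ for some $B \supseteq A$ with $a \ind_A B$, then $b = f(a) \in \dcl(B\C)$ and $b \ind_A B$ by monotonicity of independence. Stationarity of $q$ follows from that of $p$ together with $b \in \dcl(Aa)$ by a standard push-forward argument.

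Next, any $\sigma \in \Aut_A(p/\C)$ extends to some $\tilde\sigma \in \Aut(\U/A\C)$, and since every $b' \models q$ has the form $f(a')$ for some $a' \models p$, the restriction of $\tilde\sigma$ to $q(\U)$ is the rule $f(a') \mapsto f(\sigma\cdot a')$, which depends only on $\sigma$. This defines a group homomorphism $\phi \colon \Aut_A(p/\C) \to \Aut_A(q/\C)$. Surjectivity is immediate: any $\tau \in \Aut_A(q/\C)$ is induced by some $\tilde\tau \in \Aut(\U/A\C)$, whose restriction to $p(\U)$ provides a preimage in $\Aut_A(p/\C)$. Definability of $\phi$ over $A$ follows from the $A$-definable characterization $\phi(\sigma)(f(x)) = f(\sigma\cdot x)$ for $x$ in $p(\U)$.

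The substantive step is finiteness of $\ker(\phi)$ under the additional hypothesis that $p$ is minimal and $b \notin \acl(A)$. Unpacking definitions, $\sigma \in \ker(\phi)$ iff $f(\sigma\cdot a') = f(a')$ for every $a' \models p$; equivalently, every $\ker(\phi)$-orbit on $p(\U)$ lies in a single fibre of $f$. Under our hypotheses every $b' \models q$ lies outside $\acl(A)$, and minimality of $p$ yields $a' \in \acl(A, f(a'))$ for every $a' \models p$, so each fibre of $f$ meets $p(\U)$ in a finite set. Hence $\ker(\phi)$ acts on $p(\U)$ with all orbits finite. Now $\ker(\phi)$ is a definable subgroup of $\Aut_A(p/\C)$, hence an algebraic group; let $K^\circ$ denote its connected component of the identity. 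For $a \models p$ the orbit $K^\circ\cdot a$ is finite, so by orbit-stabilizer $\operatorname{Stab}_{K^\circ}(a)$ has full dimension in $K^\circ$, and by connectedness $\operatorname{Stab}_{K^\circ}(a) = K^\circ$; that is, $K^\circ$ fixes $a$. As this holds for every $a \models p$, faithfulness of the binding-group action on $p(\U)$ forces $K^\circ = \{e\}$, whence $\ker(\phi)$ is finite.
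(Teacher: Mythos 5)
Your proof is correct, and the handling of the kernel takes a genuinely different route from the paper. For finiteness of $\ker(\phi)$, the paper fixes a fundamental system of solutions $(a_1,\dots,a_n)\models p^{(n)}$, observes that an element of the binding group is determined by its action on this tuple, chooses $b_i\in\dcl(Aa_i)$ realising $q$, and notes that since $a_i\in\acl(Ab_i)$ (by minimality and $b\notin\acl(A)$), any $\sigma\in\ker(\phi)$ can take only finitely many values on $(a_1,\dots,a_n)$; hence $\ker(\phi)$ is finite by direct counting. You instead argue structurally: each $\ker(\phi)$-orbit on $p(\U)$ sits inside a fibre of $f$, these fibres meet $p(\U)$ in finite sets (again by $a'\in\acl(A,f(a'))$), so the connected component $K^\circ$ has finite orbits, whence $K^\circ$ stabilises every realisation of $p$ and must be trivial by faithfulness. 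The paper's argument is more elementary and self-contained (it only needs the fundamental-system characterisation of internality), while yours leans on the definable-group machinery — existence of a connected component of a type-definable group and the orbit–stabiliser/dimension argument — which is available in $\omega$-stable theories such as $\mathrm{DCF}_0$ but is a heavier tool. One small point worth making explicit in your write-up: the claim that a fibre of $f$ meets $p(\U)$ in a finite set uses that a type-definable set over a small parameter set which is contained in the algebraic closure of those parameters must be finite (by saturation); alternatively, and a little more directly, one can note that any $\sigma\in\ker(\phi)$ extends to an automorphism fixing $A\cup\{b\}\cup\C$ pointwise, so $\sigma(a)$ realises the algebraic type $\tp(a/Ab)$, which bounds the orbit by the (fixed, finite) multiplicity of that type.
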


\begin{proof}
Given $\sigma\in\Aut_A(p/\C)$ we define $\phi(\sigma)$ to be the restriction to $q^{\U}$ of any extension of $\sigma$ to $\Aut_A(\U/\C)$.
The fact that $b\in\dcl(Aa)$ ensures that $\phi(\sigma)$ does not depend on the extension chosen and is in fact $A$-definable.
Indeed, writing $b=f(a)$ for some $A$-definable function $f$, for any $b'\models q$ we have that $b'=f(a')$ for some $a'\models p$, and $\phi(\sigma)(b')=f(\sigma(a'))$ for any such $a'$.
It is also clear that $\phi$ is surjective: given $\tau\in\Aut_A(q/\C)$ extend it to $\hat\tau\in\Aut_A(\U/\C)$ and note that $\phi(\hat\tau|_{p(\U)})=\tau$.

Let $(a_1,\dots,a_n)\models p^{(n)}$ be a fundamental system of solutions for the $\C$-internality of $p$.
Then the action of an element of $\Aut_A(p/\C)$ on $\{a_1,\dots,a_n\}$ determines its action on $p(\U)$.
Now let $b_1,\dots,b_n\models q$ be such that $\tp(ab/A)=\tp(a_ib_i/A)$ for all $i=1,\dots, n$.
If $p$ is minimal and $b\notin\acl(A)$ then each $a_i\in\acl(Ab_i)$.
If $\sigma\in\ker(\phi)$ and $\hat\sigma\in\Aut_A(\U/\C)$ extends $\sigma$, then $\hat\sigma(b_i)=b_i$ for all $i$, and so there are only finitely many possible values that $\sigma$ can take on $\{a_1,\dots,a_n\}$.
This proves that $\ker(\phi)$ is finite.
\end{proof}

\begin{lemma}
\label{quotientliason}
Suppose $p\in S(A)$ is a stationary type that is $\C$-internal and whose binding group $G$ acts regularly -- i.e., uniquely transitively -- on $p(\U)$.
Suppose $L$ is an $A$-definable subgroup of~$G$.
Then $L$ is normal and there exist $a\models p$ and $e\in\dcl(A,a)$ such that the binding group of $\tp(e/A)$ is $G/L$.
\end{lemma}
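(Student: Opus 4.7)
My plan is to realise $G/L$ as the binding group of an imaginary element that codes the $L$-orbit of a realisation of $p$. The key object to introduce is the $A$-definable equivalence relation $E_L$ on $p(\U)$ given by $E_L(a,b)\iff\exists l\in L,\ l\cdot a=b$. For $a\models p$ I would set $e:=a/E_L$; by elimination of imaginaries for $\mathrm{DCF}_0$ this can be taken in $\U$, and clearly $e\in\dcl(A,a)$. I then let $q:=\tp(e/A)$ and aim to show $\Aut_A(q/\C)\cong G/L$.

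Since $p$ is complete over $A$, the realisations of $q$ are precisely the $E_L$-classes of realisations of $p$. Lemma~\ref{dclliason} will then supply a surjective $A$-definable homomorphism $\phi:G\to\Aut_A(q/\C)$, where $\phi(\sigma)$ is the restriction to $q(\U)$ of any lift of $\sigma$ to $\Aut_A(\U/\C)$. The main step is to identify $\ker\phi$ with $L$. Unwinding, $\sigma\in\ker\phi$ iff $\sigma(b)\in L\cdot b$ for every $b\in p(\U)$; fixing a base-point $a_0\in p(\U)$ and using the regular action to write $b=g_b\cdot a_0$, this becomes $\sigma g_b\in Lg_b$, i.e., $\sigma\in L$, independently of $b$. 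Hence $\ker\phi=L$, and this simultaneously establishes normality of $L$ and the desired isomorphism $\Aut_A(q/\C)\cong G/L$.

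The heart of the plan is thus the kernel computation, whose key ingredient is the regularity of the $G$-action: it is exactly what allows the orbit condition ``$\sigma(b)\in L\cdot b$ for every $b$'' to collapse to the single membership ``$\sigma\in L$''. I do not anticipate any serious obstacle here. The only point requiring a moment's care is verifying that $q(\U)$ really consists of all $E_L$-classes of $p(\U)$ (so that the kernel condition must be checked at every such $b$), and this follows directly from completeness of $p$ together with $A$-definability of $E_L$. Given Lemma~\ref{dclliason} and elimination of imaginaries, the argument is then a direct unwinding.
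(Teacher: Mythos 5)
Your proposal is correct and follows essentially the same route as the paper: taking $e$ to be a code for the $L$-orbit $L\cdot a$ (the paper says ``canonical parameter,'' you phrase it as the $E_L$-class, which is the same object), invoking Lemma~\ref{dclliason} for the surjection $\phi:G\to\Aut_A(q/\C)$, and then computing $\ker\phi=L$ using regularity of the $G$-action. The only cosmetic difference is that you phrase the kernel computation uniformly via a base point $a_0$ and cosets, whereas the paper argues the two inclusions $\ker\phi\subseteq L$ and $L\subseteq\ker\phi$ separately; the substance is identical.
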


\begin{proof}
Let $a\models p$ and set $e$ to be a canonical parameter for $L\cdot a$, the orbit of $a$ under the action of $L$.
Then clearly $e\in\dcl(A,a)$, and hence $q:=\tp(e/A)$ is stationary and $\C$-internal.
Let $H:=\Aut_A(q/\C)$ be its binding group and let $\phi:G\to H$ be the surjective definable homomorphism given by Lemma~\ref{dclliason}.
We show that $L=\ker(\phi)$.
Fix $g\in G$ and $\sigma\in\Aut_A(\U/\C)$ extending~$g$.
Then
\begin{eqnarray*}
g\in\ker(\phi)
& \implies &
\sigma(e)=e\\
& \implies &
\sigma(L\cdot a)=L\cdot a\\
&\implies &
L\cdot(ga)=L\cdot a\\
&\implies&
ga=\ell a\ \ \ \ \ \text{ for some }\ell\in L\\
&\implies&
g\in L\ \ \ \text{ by the regularity of the action.}
\end{eqnarray*}
Conversely, suppose $g\in L$.
Let $e'\models q$ be arbitrary.
Then $e'$ is the canonical parameter for $L\cdot a'$ for some $a'\models p$.
Now
$g(L\cdot a')=L\cdot a'$ as $g\in L$.
Hence $\sigma(L\cdot a')=L\cdot a'$, and so $\sigma(e')=e'$.
That is, $\phi(g)=1$, as desired.
\end{proof}

\begin{lemma}
\label{normal}
Suppose $p\in S(A)$ is a stationary type that is $\C$-internal with binding group $G$.
Suppose $H$ is an algebraic group over the constants and $\phi:H(\C)\to G$ is a definable isomorphism over possibly additional parameters.
If $L$ is a normal algebraic subgroup of $H$ over the constants then $\phi\big(L(\C)\big)$ is definable over $A\cup\C$.
\end{lemma}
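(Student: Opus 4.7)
The plan is to exploit the fact that $L':=\phi(L(\C))$ is a normal subgroup of $G$---it is the image under a group isomorphism of the normal subgroup $L(\C)$ of $H(\C)$---and to show that the Galois group $\Aut(\U/A\cup\C)$ acts on $G$ by inner automorphisms. Once this is established, normality of $L'$ forces it to be fixed setwise by every such automorphism, and a standard stability argument then yields definability over $A\cup\C$.

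The reduction from $\Aut(\U/A\cup\C)$-invariance to $(A\cup\C)$-definability is routine: in the stable theory $\mathrm{DCF}_0$, a definable set invariant under $\Aut(\U/A\cup\C)$ is definable over $\acl^{\mathrm{eq}}(A\cup\C)$, and since $\C$ is a stably embedded pure algebraically closed field, one may descend to a definition with parameters in $A\cup\C$ after absorbing a finite constant tuple.

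The substantive step is the identification of the $\Aut(\U/A\cup\C)$-action on $G$ with an inner action. Given $\sigma\in\Aut(\U/A\cup\C)$, the type $p$ is over $A$ so $\sigma$ preserves $p(\U)$; because $\sigma$ also fixes $\C$ pointwise, its restriction $s:=\sigma|_{p(\U)}$ is itself an element of $G=\Aut_A(p/\C)$. A short unwinding will then show that for any $g\in G$ realised by some $\tau\in\Aut(\U/A\cup\C)$, the conjugate $\sigma\tau\sigma^{-1}$ again fixes $A\cup\C$ pointwise and restricts on $p(\U)$ to $sgs^{-1}$. So the induced action of $\sigma$ on $G$ is conjugation by $s$, and normality of $L'$ immediately gives $\sigma(L')=sL's^{-1}=L'$.

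The main obstacle I expect is exactly this inner-automorphism computation---verifying carefully that $s$ genuinely belongs to $G$ (not merely to $\Aut_A(p)$), and that conjugation at the level of global automorphisms of $\U$ descends under the binding-group interpretation to conjugation inside $G$. This is where both hypotheses---that $G$ is the binding group of $p$ and that $L$ is normal---enter the argument. Everything else is formal or standard.
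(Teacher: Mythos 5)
Your argument is correct and it takes a genuinely different route from the paper's. The paper's proof invokes Hrushovski's \emph{binding groupoid}: there is a definable connected groupoid $\mathcal{G}$ over $A$ with $G=\Aut_{\mathcal G}(O_*)$, $H(\C)=\Aut_{\mathcal G}(O_c)$, and $\phi$ equal to conjugation by a fixed morphism $f_m\colon O_c\to O_*$; any two such morphisms differ by precomposition with an element of $H(\C)$, so normality of $L(\C)$ makes $\phi\big(L(\C)\big)$ independent of the choice of $m$ and hence fixed setwise by every $\sigma\in\Aut(\U/A\cup\C)$. You bypass the groupoid entirely and argue directly that $\Aut(\U/A\cup\C)$ acts on the $A$-definable group $G$ by inner automorphisms --- conjugation by $s:=\sigma|_{p(\U)}\in G$, which one verifies by applying $\sigma$ to the graph of $g$ (or to a fundamental system of solutions together with its image under $g$) --- so that the normal subgroup $L'=\phi\big(L(\C)\big)$ is carried to itself. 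The two proofs are morally parallel, normality in each case converting the ambiguity in identifying $G$ with $H(\C)$ into invariance, but yours is more elementary and self-contained, at the modest cost of carrying out the $\sigma(g)=sgs^{-1}$ computation carefully. One small simplification: invariance of a definable set under $\Aut(\U/A\cup\C)$ already places its canonical parameter in $\dcl^{\mathrm{eq}}(A\cup\C)$, so there is no need to pass through $\acl^{\mathrm{eq}}$ and then absorb a finite tuple of constants.
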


\begin{proof}
For convenience, let us take $A=\emptyset$.
Recall that the binding group appears as the group of automorphisms of an object in the {\em binding groupoid} as introduced by Hrushovski in~\cite{hrushovskigroupoid} -- but see also~\cite[$\S$3]{groupoid} for a detailed exposition with present terminology.
Namely, we have a $0$-definable connected groupoid $\mathcal G$ with objects $(O_i)_{i\in I}$ and morphisms $(f_m:O_i\to O_j)_{m\in M(i,j)}$ all in the pure field of constants, plus one new object $O_*$  with morphisms $(f_m:O_i\to O_*)_{m\in M(i,*)}$ and $(f_m:O_*\to O_j)_{m\in M(*,j)}$ in $\U$, such that $G=\Aut_{\mathcal G}(O_*)$, $H(\C)=\Aut_{\mathcal G}(O_c)$, and $\phi=f_m$ for some fixed $c\in I$ and $m\in M(c,*)$.
Now, each morphism $f_{m'}:O_c\to O_*$ induces a definable isomorphism $F_{m'}:\Aut_{\mathcal G}(O_c)\to \Aut_{\mathcal G}(O_*)$ given by $\alpha\mapsto f_{m'}\alpha f_{m'}^{-1}$.
Since $L(\C)$ is normal in $\Aut_{\mathcal G}(O_c)$ we have that $F_{m}\big(L(\C)\big)=F_{m'}\big(L(\C)\big)$ for all $m'\in M(c,*)$.
In particular, or all $\sigma\in\Aut(\U/\C)$,
\begin{eqnarray*}
\sigma\phi\big(L(\C)\big)&=&\sigma F_{m}\big(L(\C)\big)\\
&=&F_{\sigma(m)}\big(L(\C)\big)\\
&=&F_{m}\big(L(\C)\big)\\
&=&\phi\big(L(\C)\big)
\end{eqnarray*}
as desired.
\end{proof}

Suppose $p\in S(A)$ is a stationary type.
We say that $p$ is {\em weakly orthogonal to~$\C$} if $a\ind_A\C$ for all $a\models p$.
An equivalent characterisation is that of ``having no new constants"; namely, that $\dcl(A,a)\cap\C=\dcl(A)\cap\C$.
Note that by the fundamental system of solutions characterisation of internality, if $p$ is $\C$-internal then $p^{(n)}$ is not weakly $\C$-orthogonal for some $n<\omega$.

Because of stationarity, if $p$ is both $\C$-internal and weakly $\C$-orthogonal, then $\Aut_A(p/\C)$ acts transitively on $p(\U)$.
In particular, 
being the orbit of an element under the definable action of a definable group, $p(\U)$ is a definable set and so $p$ is isolated.
If $p$ is in addition minimal then $p(\U)$ will be a strongly minimal set.

\begin{lemma}
\label{dimliason}
Suppose $p\in S(A)$ is minimal $\C$-internal.
If $p^{(n)}$ is weakly orthogonal to $\C$ but $p^{(n+1)}$ is not, then $\dim\Aut_A(p/\C)=n$.
\end{lemma}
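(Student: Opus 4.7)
The plan is to set $G = \Aut_A(p/\C)$ and compute $\dim G$ via orbit-stabiliser for the diagonal action of $G$ on $p^{(n)}(\U)$. The two hypotheses translate directly: weak $\C$-orthogonality of $p^{(n)}$ amounts to $G$ acting transitively on $p^{(n)}(\U)$, while the failure of weak $\C$-orthogonality for $p^{(n+1)}$ amounts to failure of transitivity on $p^{(n+1)}(\U)$.

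Assuming $n \geq 1$, weak $\C$-orthogonality of $p$ itself (inherited from $p^{(n)}$) together with minimality makes $p(\U)$ a strongly minimal set, so $p^{(n)}(\U)$ has Morley rank $n$. Transitivity of the $A$-definable action of $G$ on this set yields $\dim G = n + \dim G_{\bar{a}}$ for any $\bar{a} = (a_1, \dots, a_n) \models p^{(n)}$, so $\dim G \geq n$, and it remains to show $G_{\bar{a}}$ is finite.

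Suppose for contradiction that $H := (G_{\bar{a}})^0$ is positive-dimensional. Then $H$ is a connected $A\bar{a}$-definable subgroup of $G$, acting faithfully on $p(\U)$. Each $H$-orbit is a definable subset of the strongly minimal $p(\U)$, hence finite or cofinite. They cannot all be finite, for then each such orbit would be the image of the connected group $H$ in a finite set, forcing it to collapse to a point; $H$ would act trivially, and faithfulness would give $H = 1$. So $H$ has a (necessarily unique) cofinite orbit $O \subseteq p(\U)$, and since $H$ and $O$ are $A\bar{a}$-definable, $p(\U) \setminus O$ is a finite subset of $\acl(A\bar{a})$. Any generic realisation $a_{n+1}$ of $p$ over $A\bar{a}$ avoids $\acl(A\bar{a})$ and so lies in $O$; its $G_{\bar{a}}$-orbit therefore contains $O$, and any two such orbits coincide (two cofinite classes of the same partition of $p(\U)$ must). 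Thus $G_{\bar{a}}$ acts transitively on the generic realisations of $p$ over $A\bar{a}$, which is equivalent to $p^{(n+1)}$ being weakly $\C$-orthogonal---a contradiction, giving $\dim G_{\bar{a}} = 0$ and $\dim G = n$.

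The main point requiring care is the connectedness step, which relies on identifying the binding group (over additional parameters) with the $\C$-points of an algebraic group, so that its definable subgroups possess well-behaved connected components; this is a standard feature of $\C$-internal types in DCF. The boundary case $n = 0$ requires a separate but shorter argument: the failure of weak $\C$-orthogonality of $p$ furnishes a constant $c \in \dcl(A,a) \cap \C$ with $c \notin \dcl(A)$, and minimality of $p$ together with $U$-rank inequalities then forces $a \in \acl(A,c)$, so $G$ has only finite orbits on $p(\U)$, and the connectedness-plus-faithfulness argument again yields $\dim G = 0$.
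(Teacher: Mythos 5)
Your proof is correct but takes a genuinely different route from the paper's. Both arguments establish $\dim G\geq n$ via transitivity on $p^{(n)}(\U)$ and then reduce to showing that the stabiliser $\Fix(\bar a)$ (your $G_{\bar a}$) is finite, and both extract this finiteness from the same key consequence of the hypotheses: writing $B=A\cup\{a_1,\dots,a_n\}$, minimality together with the failure of weak $\C$-orthogonality for $p^{(n+1)}$ places every realisation of $p$ generic over $B$ inside $\acl(B,\C)$. From there the paper extends $\bar a$ to a Morley sequence $(a_1,\dots,a_m)$ forming a fundamental system of solutions and embeds $\Fix(\bar a)$ into the binding group $\Aut_B(q^{(m-n)}/\C)$ of the tail of the sequence, which is finite precisely because its realisations lie in $\acl(B,\C)$; you instead argue group-theoretically that a positive-dimensional connected subgroup of $G_{\bar a}$ would have a cofinite orbit $O$ in the strongly minimal set $p(\U)$ containing every generic over $B$, so the $G_{\bar a}$-orbit of such a generic would be infinite --- contradicting that it sits inside the finite $\Aut(\U/B\cup\C)$-orbit of that generic. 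Your route avoids the fundamental-system machinery and leans instead on connected components of definable groups of finite Morley rank, which is indeed unproblematic and in fact holds in any $\omega$-stable theory without needing the algebraic-group identification you invoke; in exchange, the paper's proof is uniform in $n$, while you must treat $n=0$ separately since strong minimality of $p(\U)$ already presupposes $p$ weakly $\C$-orthogonal. One small imprecision in your $n=0$ discussion: a new constant $c\in\dcl(A,a)\cap\C\setminus\dcl(A)$ need not lie outside $\acl(A)$, so $a\in\acl(A,c)$ is not automatic --- but the conclusion you actually need, namely $a\in\acl(A,\C)$ and hence finite $G$-orbits, follows directly from $a\nind_A\C$ and minimality, so nothing essential is lost.
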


\begin{proof}
Note, first of all, that $G=\Aut_A(p^{(m)}/\C)$ for all $m\geq 1$.
So if $p^{(n)}$ is weakly orthogonal to $\C$ then the action of $G$ on $p^{(n)}(\U)$ is transitive, and hence $\dim G\geq n$.
Now, let $(a_1,\dots,a_n)\models p^{(n)}$.
We can extend this to a Morley sequence $(a_1,\dots,a_m)$ that forms a fundamental system of solutions, with $m> n$.
In particular, $G$ is determined by its action on $(a_1,\dots,a_m)$.
If we let $q$ be the nonforking extension of $p$ to $B:=A\cup\{a_1,\dots,a_n\}$ and set $K:=\Fix(a_1,\dots,a_n)=\{g\in G:ga_i=a_i\}$, then $K$ embeds into $\Aut_B(q^{(m-n)}/\C)$.
But as $p$ is minimal and $p^{(n+1)}$ is not weakly orthogonal to $\C$ we have that $p(\U)\subseteq\acl(B,\C)$, and hence $\Aut_B(q^{(m-n)}/\C)$ is a finite group.
So $K$ is finite.
Since every element of $G$ is determined modulo~$K$, by its action on $(a_1,\dots,a_n)$, this means that $G\subseteq\acl(A,a_1,\dots,a_n)$.
Hence $\dim G=n$, as desired.
\end{proof}

\begin{corollary}
\label{dimliason-cor}
Suppose $p\in S(A)$ is minimal $\C$-internal.
Then
$$\dim\Aut_A(p/\C)=\max\{m<\omega: p^{(m)}\text{ is weakly orthogonal to }\C\}.$$
\end{corollary}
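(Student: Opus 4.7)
The plan is to deduce the corollary directly from Lemma~\ref{dimliason} by identifying the right value of $n$: namely, the maximum of the set $S := \{m<\omega: p^{(m)}\text{ is weakly orthogonal to }\C\}$. To do this I need only verify that $S$ is a nonempty bounded initial segment of $\omega$.

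First I would show $S$ is downward-closed. If $p^{(m+1)}$ is weakly $\C$-orthogonal and $(a_1,\dots,a_m)\models p^{(m)}$, extend to a realisation $(a_1,\dots,a_{m+1})\models p^{(m+1)}$ by picking $a_{m+1}$ realising the nonforking extension of $p$ to $A\cup\{a_1,\dots,a_m\}$. Then $(a_1,\dots,a_{m+1})\ind_A \C$, and forgetting the last coordinate preserves independence, so $(a_1,\dots,a_m)\ind_A\C$. Hence $p^{(m)}\in S$.

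Next, $0\in S$ trivially (the empty tuple is independent from everything over $A$), and $S$ is bounded because of the observation recorded in the paragraph just before Lemma~\ref{dimliason}: since $p$ is $\C$-internal, some $p^{(n)}$ fails to be weakly $\C$-orthogonal, by the fundamental-system-of-solutions characterisation of internality.

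Thus $n:=\max S$ exists and satisfies the hypotheses of Lemma~\ref{dimliason}: $p^{(n)}$ is weakly $\C$-orthogonal while $p^{(n+1)}$ is not. Applying that lemma gives $\dim\Aut_A(p/\C)=n$, which is the desired equality. The real work went into Lemma~\ref{dimliason} itself; the corollary is essentially bookkeeping, so I do not anticipate any obstacle.
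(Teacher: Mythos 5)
Your proof is correct and takes essentially the same route as the paper's: both verify that the set of $m$ with $p^{(m)}$ weakly $\C$-orthogonal is a bounded initial segment (using $\C$-internality for boundedness and monotonicity of non-weak-orthogonality for the initial-segment structure), then apply Lemma~\ref{dimliason} at the maximum. You spell out the monotonicity step slightly more explicitly, but the argument is the same.
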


\begin{proof}
As we have pointed out, $\C$-internality implies that some power is not weakly $\C$-orthogonal.
Note also that if $p^{(m)}$ is not weakly $\C$-orthogonal then neither is $p^{(m+1)}$.
The Corollary now follows from Lemma~\ref{dimliason}.
\end{proof}

A more robust notion than internality is {\em almost internality}:
$p$ is almost $\C$-internal if for some $B\supseteq A$ and $a\models p$ independent of $B$ over $A$, $a\in\acl(B,\C)$.
So $\acl$ has replaced $\dcl$ in the definition.

\begin{lemma}
\label{iai}
  Every almost $\C$-internal type over~$A$ is interalgebraic over $A$ with a $\C$-internal type over $A$.\end{lemma}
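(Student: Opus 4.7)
The plan is to exhibit $b := \operatorname{Cb}(\tp(Bc/Aa))$, the canonical base computed in $\U^{\operatorname{eq}}$, as a $\C$-internal companion to $a$ that is interalgebraic with it over $A$. Here $a\models p$ and $B\supseteq A$, together with a tuple $c$ from $\C$, witness almost $\C$-internality: $a\ind_A B$ and $a\in\acl(ABc)$. The argument splits into verifying (a) that $a$ and $b$ are interalgebraic over $A$, and (b) that $\tp(b/A)$ is genuinely $\C$-internal; both halves rely only on standard properties of canonical bases in stable theories.

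For (a), the containment $b\in\acl^{\operatorname{eq}}(Aa)$ is immediate from the definability of types. In the other direction, the defining property $Bc\ind_b Aa$ of the canonical base yields $a\ind_b Bc$ by symmetry; combined with the hypothesis $a\in\acl(Bc)$ and the standard forking-algebraicity principle (if $a\in\acl(XY)$ and $a\ind_X Y$ then $a\in\acl(X)$), this gives $a\in\acl(b)\subseteq\acl(Ab)$.

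For (b), I would invoke the Morley-sequence characterisation of canonical bases: there is a Morley sequence $(B_ic_i)_{i=1}^n$ of $\tp(Bc/Aa)$ over $Aa$ with $b\in\dcl^{\operatorname{eq}}(B_1c_1,\dots,B_nc_n)$. Since $\C$ is $\emptyset$-definable, each $c_i$ is again a tuple from $\C$, so $b$ lies in $\dcl^{\operatorname{eq}}((B_1\cup\dots\cup B_n)\cup\C)$. An inductive application of the Morley-sequence independences, starting from $a\ind_A B$ and using transitivity of forking, yields $a\ind_A B_1\cdots B_n$; since $b\in\acl^{\operatorname{eq}}(Aa)$ this upgrades to $b\ind_A B_1\cdots B_n$. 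Setting $B':=A\cup B_1\cup\dots\cup B_n$ then exhibits $b\in\dcl^{\operatorname{eq}}(B',\C)$ with $b\ind_A B'$, which is exactly the $\C$-internality of $\tp(b/A)$.

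The main obstacle is less mathematical depth than bookkeeping: one must check that the iterated Morley independences really do compose to give $a\ind_A B_1\cdots B_n$, and that the $\dcl^{\operatorname{eq}}$ and $\acl^{\operatorname{eq}}$ containments land in the correct quarters of the argument. The crucial structural observation throughout is the $\emptyset$-definability of $\C$, which forces Morley-sequence conjugates of a $\C$-tuple to remain $\C$-tuples; this is precisely what transfers the $\C$-internal character from the ``almost''-internality witness to the honest internal companion $b$.
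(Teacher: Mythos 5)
Your argument is correct, but it takes a genuinely different route from the paper. You construct the companion as a canonical base $b = \operatorname{Cb}\big(\stp(Bc/Aa)\big)$ (use the strong type so that $\operatorname{Cb}$ is well-defined) and then appeal to two standard facts: the forking--algebraicity transfer giving $a\in\acl(b)$, and the Morley-sequence description $b\in\dcl^{\operatorname{eq}}(B_1c_1,\dots,B_nc_n)$ giving $\C$-internality. The paper instead runs a hands-on minimality argument: it chooses an $A$-formula $\varphi(x,y,z)$, a tuple $b$ with $b\ind_A a$, and a constant tuple $c$ with $\models\varphi(a,b,c)$ so as to minimise $n := |\varphi(\U,b,c)|$; minimality forces $\varphi(\U,b,c)\subseteq\acl(Aa)$, and the code $d$ of this finite set is then the $\C$-internal companion, with $d\ind_A b$ and $d\in\dcl(A,b,c)$. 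Both proofs produce an imaginary interalgebraic with $a$ over $A$, and both use the $\emptyset$-definability of $\C$ to keep the constant witnesses inside $\C$ under conjugation. Your route is slicker if the reader already carries the canonical-base toolbox; the paper's is more self-contained, needing only finite character of algebraicity and elementary forking calculus rather than the Morley-sequence characterisation of $\operatorname{Cb}$. For the record, your induction $a\ind_A B_1\cdots B_n$ does close: the Morley independence $B_1\cdots B_i\ind_{Aa}B_{i+1}$ together with the inductive hypothesis $B_1\cdots B_i\ind_A a$ gives, by transitivity, $B_1\cdots B_i\ind_{AB_{i+1}}a$, which combined with $B_{i+1}\ind_A a$ completes the step.
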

\begin{proof}
This is well known, but as we could not find a detailed proof we provide one here.
Suppose $p\in S(A)$ is almost $\C$-internal and $a\vDash p$.
Let $n$ be least such that
  there exists an $L_A$-formula $\varphi(x,y,z)$,
  a tuple $b$ independent from $a$ over $A$ and
  a tuple $c$ from $\C$ such that
  $\vDash\varphi(a,b,c)$ and $|\varphi(\mathcal U,b,c)|=n$.
  We fix these $b$, $c$, and $\varphi$.

  We claim that $\varphi\left(\mathcal U,b,c\right)\subseteq\mathrm{acl}(Aa)$.
  Indeed, let $a=a_1,a_2,\ldots ,a_n$ be the elements of $\varphi(\mathcal U,b,c)$. Towards a contradiction,
  suppose without loss of generality
  that $a_2\not\in\mathrm{acl}(Aa)$. Then there are $a_2'$, $b'$ and $c'$ such that
  $\tp(a_2'b'c'/Aa)=\tp(a_2bc/Aa)$ and
  $a_2'b'\ind_{Aa} a_2\cdots a_nb$.
  We want to show that $a\models \varphi(x,b,c)\wedge\varphi(x,b',c')$ also witnesses the almost $\C$-internality of $p$.
  This would suffice as it would contradict the minimality of $n$.
  We need that $bb'\ind_A a$.
  Since $b\ind_A a$ and $b\ind_{Aa} b'$,
  we have $b\ind_{Ab'} a$.
 But also, $b'\ind_A a$.
 Hence, $bb'\ind_A a$, as desired.

 Now, let $d$ be a code for the finite set $\varphi\left(\U,b,c\right)$.
Then $a\in\acl(Ad)$ and as each $a_i\in \acl(Aa)$ we get that $d\in \acl(Aa)$ as well.
So $\acl(Aa)=\acl(Ad)$.
Moreover, it follows that $d\ind_A b$, and as $d\in\dcl(A,b,c)$ we have that $\tp(d/A)$ is $\C$-internal.
\end{proof}

\begin{lemma}
\label{aim}
Suppose $p\in S(A)$.
The following are equivalent:
\begin{itemize}
\item[(i)]
$p$ is almost $\C$-internal,
\item[(ii)]
for some $m>0$, $p^{(m)}$ is not weakly orthogonal to $\C$,
\item[(iii)]
for some $m>0$ and $(a_1,\dots,a_m)\models p^{(m)}$, $a_m\in\acl(A,\C,a_1,\dots,a_{m-1})$.
\end{itemize}
\end{lemma}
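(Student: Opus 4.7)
The plan is to establish the cycle $(\mathrm{iii}) \Rightarrow (\mathrm{i}) \Rightarrow (\mathrm{ii}) \Rightarrow (\mathrm{iii})$. The first implication falls out immediately from the definition of almost $\C$-internality: setting $B := A \cup \{a_1, \dots, a_{m-1}\}$, the Morley sequence property gives $a_m \ind_A B$, while $(\mathrm{iii})$ supplies $a_m \in \acl(B \cup \C)$, so that $p = \tp(a_m/A)$ is witnessed as almost $\C$-internal via this $B$.

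For $(\mathrm{i}) \Rightarrow (\mathrm{ii})$, I would invoke Lemma~\ref{iai} to produce a $\C$-internal $q \in S(A)$ interalgebraic with $p$ over $A$. The fundamental system of solutions characterisation of internality---recalled in the paragraph immediately preceding Lemma~\ref{dimliason}---ensures that $q^{(n)}$ is not weakly $\C$-orthogonal for some $n$. Interalgebraicity over $A$ then couples a Morley sequence $(a_1,\dots,a_n) \models p^{(n)}$ with a Morley sequence $(d_1,\dots,d_n) \models q^{(n)}$ satisfying $\acl(A,a_i) = \acl(A,d_i)$ for each $i$. The two tuples therefore have equal algebraic closures over $A$ and so share the same independence behaviour with $\C$; transferring $(d_1,\dots,d_n) \nind_A \C$ back to $(a_1,\dots,a_n) \nind_A \C$ establishes $(\mathrm{ii})$.

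For $(\mathrm{ii}) \Rightarrow (\mathrm{iii})$, I would take $m$ to be least with $p^{(m)}$ not weakly $\C$-orthogonal, and fix a realisation $(a_1,\dots,a_m)$ witnessing $(a_1,\dots,a_m) \nind_A \C$. The minimality of $m$ gives $(a_1,\dots,a_{m-1}) \ind_A \C$, and transitivity of non-forking on the right (the pair lemma) then converts the failure of $\C \ind_A (a_1,\dots,a_m)$ into $a_m \nind_{A,\, a_1,\dots,a_{m-1}} \C$. Since $\tp(a_m/A,a_1,\dots,a_{m-1})$ is the non-forking extension of the minimal type $p$, it is itself minimal; on a minimal type forking is the same as algebraicity, and we conclude $a_m \in \acl(A,\C,a_1,\dots,a_{m-1})$.

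The principal subtlety lies in this last step, where the passage from a forking extension over $\C$ to an algebraic one uses the minimality of $p$; this is consistent with the standing framework of the paper, in which the types of interest arise from first-order algebraic differential equations and hence have $U$-rank $1$. The other two implications are formal and go through for any stationary $p$, given Lemma~\ref{iai} and the prior remark on fundamental systems of solutions.
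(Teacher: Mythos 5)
Your proof is correct and uses essentially the same ingredients as the paper's: Lemma~\ref{iai} together with the fundamental-system characterisation for the direction out of (i), and minimality of $p$ to convert forking over $\C$ into algebraicity for the step establishing (iii). The only difference is the order of the cycle --- the paper proves (iii)$\Rightarrow$(ii)$\Rightarrow$(i) and (i)$\Rightarrow$(iii), while you prove (iii)$\Rightarrow$(i)$\Rightarrow$(ii)$\Rightarrow$(iii) --- but this is a cosmetic reorganisation, not a different argument. You are also right to flag that the minimality of $p$ is a genuinely needed hypothesis that the lemma's statement omits; the paper's own proof of (ii)$\Rightarrow$(i) invokes ``by minimality'' in exactly the same way, so the lemma should be read with the standing assumption that $p$ is minimal, as it is in all its applications.
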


\begin{proof}
That~(iii) implies~(ii) with the same $m$ is clear.
For~(ii) implies~(i) note that by minimality, $(a_1,\dots,a_m)\nind_A\C$ yields $a_i\in\acl(A,\C,a_1,\dots,a_{i-1})$ for some $i\leq m$.

Finally, assume~(i) holds.
Let $p'\in S(A)$ be $\C$-internal and interalgebraic with~$p$ over $A$, by Lemma~\ref{iai}.
If $(a_1',\dots, a_n')$ is a fundamental system of solutions for $p'$ and $a'_{n+1}\models p'$ is independent of $(a_1',\dots,a_n')$, then $a_{n+1}'\in\dcl(A,\C,a_1',\dots, a_n')$.
Let $a_1\models p$ be interalgebraic with $a_1'$ over $A$, and extend it to $(a_1,\dots,a_{n+1})$ such that $\big((a_1,a_1'),\dots,(a_{n+1},a_{n+1}')\big)$ is a Morely sequence.
Then $a_{n+1}\in\acl(A,a_{n+1}')\subseteq\acl(A,\C,a_1',\dots, a_n')=\acl(A,\C,a_1,\dots, a_n)$, which is~(iii) with $m=n+1$.
\end{proof}

As suggested by Corollary~\ref{dimliason-cor} and Lemma~\ref{aim},
when working with almost $\C$-internality, rather than $\C$-internality, the role of the dimension of the binding group is played by $\max\{m<\omega: p^{(m)}\text{ is weakly orthogonal to }\C\}$.
This will agree with the dimension of the binding group of a $\C$-internal type interalgebraic with $p$.

The following is the manifestation in differentially closed fields of Hrushovski's~\cite{udi89} classification of faithful transitive group actions on strongly minimal sets in stable theories, along with the fact that every infinite finite rank definable field in a differentially closed field is definably isomorphic to the field of constants (see~\cite[Corollary~1.6]{pillaydag}).

\begin{fact}
\label{homspace}
Suppose $(G,S)$ is a definable faithful transitive group action of $G$ on a strongly minimal set $S$.
Then
\begin{itemize}
\item[(1)]
$\dim G=1$ and $G$ acts regularly, or
\item[(2)]
$\dim G=2$ and $(G,S)$ is definably isomorphic to the action of $\Gm(\C)\ltimes\Ga(\C)$ on $\C$ by affine transformations, or
\item[(3)]
$\dim G=3$ and $(G,S)$ is definably isomorphic to the action of $\operatorname{PSL}_2(\C)$ on $\mathbb P(\C)$ by projective transformations.
\end{itemize}
In particular, $\dim G\leq 3$.
\end{fact}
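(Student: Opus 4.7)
The plan is simply to combine two existing results: Hrushovski's classification of strongly minimal homogeneous spaces in stable theories from~\cite{udi89}, and Pillay's identification of finite rank definable fields in differentially closed fields from~\cite{pillaydag}. No independent argument is really needed in the present setting once these are in hand.

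In the general stable setting, \cite{udi89} tells us that a definable group $G$ acting faithfully and transitively on a strongly minimal set $S$ satisfies $\dim G\leq 3$, and that the action is sharply $(\dim G)$-transitive. When $\dim G=1$ this gives case~(1) directly, as sharp $1$-transitivity is just regularity. When $\dim G\in\{2,3\}$, Hrushovski's result goes further: it produces an infinite field $K$ interpretable in $\U$ together with a definable isomorphism of $(G,S)$ with either $K^\times\ltimes K$ acting on $K$ by affine transformations (if $\dim G=2$) or $\operatorname{PSL}_2(K)$ acting on $\mathbb{P}^1(K)$ by projective transformations (if $\dim G=3$).

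To finish, I would observe that since $G$ has finite $U$-rank, so does the interpreted field $K$. By~\cite[Corollary~1.6]{pillaydag}, the only infinite definable field of finite rank in a differentially closed field of characteristic zero is, up to definable isomorphism, the field of constants $\C$. Transporting the action along this isomorphism converts Hrushovski's abstract form into exactly the $\Gm(\C)\ltimes\Ga(\C)$ and $\operatorname{PSL}_2(\C)$ actions described in cases~(2) and~(3). Since the whole argument amounts to quoting these two theorems, I would present it in the paper as a short remark with pointers to~\cite{udi89} and~\cite{pillaydag} rather than reconstructing the classification. The only point needing a moment's care is that the interpretation of $K$ and the witnessing isomorphism may require additional parameters, but this is harmless for the statement as given, which allows such parameters implicitly.
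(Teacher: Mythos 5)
Your proposal is exactly the route the paper takes: the paper states this as a Fact with no proof, citing precisely Hrushovski's classification of faithful transitive actions on strongly minimal sets from~\cite{udi89} together with the identification of infinite finite-rank definable fields in $\operatorname{DCF}_0$ with the constants from~\cite[Corollary~1.6]{pillaydag}. Your sketch supplies the same two ingredients and assembles them in the same way, so there is nothing to add.
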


\begin{corollary}
\label{atmost3}
Suppose $p\in S(A)$ is a minimal type that is almost $\C$-internal.
Then $p^{(4)}$ is not weakly orthogonal to $\C$.
\end{corollary}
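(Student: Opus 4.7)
The plan is to reduce to the $\C$-internal case via Lemma~\ref{iai} and then apply the dimension bound in Fact~\ref{homspace}. First, I would use Lemma~\ref{iai} to obtain a $\C$-internal type $p'\in S(A)$ interalgebraic with $p$ over~$A$; since interalgebraicity preserves $U$-rank (and stationarity can be arranged), $p'$ is again a minimal type. I would then observe that, for every $m\geq 1$, $p^{(m)}$ is weakly $\C$-orthogonal if and only if $p'^{(m)}$ is: pair up Morley sequences $(a_1,\dots,a_m)\models p^{(m)}$ and $(a'_1,\dots,a'_m)\models p'^{(m)}$ with $a_i$ interalgebraic with $a'_i$ over $A$, so that the two tuples generate the same algebraic closure over~$A$, whence weak orthogonality to~$\C$ transfers between them.

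The main step is then a contrapositive argument. Suppose for contradiction that $p^{(4)}$, and hence $p'^{(4)}$, is weakly $\C$-orthogonal. By monotonicity of forking, $p'$ itself is then weakly $\C$-orthogonal, so the discussion preceding Lemma~\ref{dimliason} tells us that $p'(\U)$ is a strongly minimal set and that the binding group $G:=\Aut_A(p'/\C)$ acts on it faithfully and transitively. Fact~\ref{homspace} applied to $(G,p'(\U))$ then yields $\dim G\leq 3$. On the other hand, Corollary~\ref{dimliason-cor} gives $\dim G=\max\{m<\omega: p'^{(m)}\text{ is weakly orthogonal to }\C\}\geq 4$, a contradiction.

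I do not anticipate a serious obstacle here; the proof is essentially a packaging of the results already assembled in the section. The one technical point to be careful about is ensuring that the interalgebraic $p'$ produced by Lemma~\ref{iai} really is stationary of $U$-rank one so that Fact~\ref{homspace} applies with $p'(\U)$ as a strongly minimal set — and this is immediate from interalgebraicity with the minimal type~$p$.
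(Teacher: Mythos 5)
Your proposal is correct and follows essentially the same route as the paper: reduce to the $\C$-internal case via Lemma~\ref{iai}, invoke Corollary~\ref{dimliason-cor} to relate weak $\C$-orthogonality of powers to the dimension of the binding group, and apply Fact~\ref{homspace} to bound that dimension by~$3$. The only cosmetic differences are that you argue by contradiction and spell out the transfer of weak $\C$-orthogonality along the interalgebraicity, whereas the paper simply says ``we may assume $p$ is $\C$-internal'' and concludes directly.
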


\begin{proof}
By Lemma~\ref{iai} we may assume that $p$ is $\C$-internal.
Let $G=\Aut_A(p/\C)$ be the binding group.
By Corollary~\ref{dimliason-cor}, it suffices to show that $\dim G\leq 3$.
But if $\dim G>0$ then $p$ is weakly orthogonal to $\C$ so that $G$ acts transitively on $S:=p(\U)$.
Now apply Fact~\ref{homspace}.
\end{proof}

\medskip

Finally, the following is a useful consequence of stable embededness.

\begin{lemma}
\label{inc}
Suppose $\phi(z)$ is a formula over $A$ in variables $z=(z_1,\dots,z_m)$.
If $\phi(z)$ has a realisation in $\C^m$ then it has a realisation in $\big(\acl(A)\cap\C\big)^m$.
\end{lemma}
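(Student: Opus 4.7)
The plan is to deduce this from the fact that $\C$ is stably embedded in $(\U,\delta)$ as a pure algebraically closed field. Let $X := \phi(\U)\cap\C^m$, which is an $A$-definable subset of $\C^m$. By stable embeddedness, $X$ is definable in the pure field structure on $\C$, say by a field-language formula $\psi(z,c)$ with parameters $c$ from $\C$. Since $X$ is $\Aut(\U/A)$-invariant, its canonical parameter (computed in the pure field $\C$) is fixed by every element of $\Aut(\U/A)$, hence lies in $\dcl(A)\cap\C$. So after replacing $c$ by this canonical parameter we may assume $c\in\acl(A)\cap\C$.

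Next I would observe that $B:=\acl(A)\cap\C$ is relatively algebraically closed in $\C$: indeed, again by stable embeddedness together with the fact that algebraic closure in the pure ACF structure on $\C$ coincides with field-theoretic algebraic closure, one has $\acl_{\mathrm{ACF}}(B)\subseteq\acl_\U(B)\cap\C=B$. Thus $B$ is itself an algebraically closed field, and hence an elementary substructure of $\C$ in the pure field language.

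Finally, by hypothesis $X\neq\emptyset$, so $\psi(z,c)$ has a realisation in $\C^m$. By elementary equivalence of $B\preceq\C$ (and since the parameters $c$ already lie in $B$), there is a realisation in $B^m=(\acl(A)\cap\C)^m$. Any such realisation is a tuple from $\acl(A)\cap\C$ satisfying $\phi(z)$, which is what we wanted.

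There is no real obstacle here beyond carefully assembling the standard consequences of stable embeddedness of the constants; the statement is a soft model-theoretic fact rather than something requiring new ideas.
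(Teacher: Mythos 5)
Your proof is correct and follows essentially the same route as the paper: use stable embeddedness of $\C$ to reduce $\phi(\C)$ to a pure-field formula over parameters in $\dcl(A)\cap\C$ (you spell out the canonical-parameter argument the paper implicitly invokes), observe that $\acl(A)\cap\C$ is algebraically closed and hence an elementary substructure of $\C$, and conclude by elementarity.
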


\begin{proof}
By stable embeddeness, $\phi(\C)=\psi(\C)$ where $\psi(z)$ is an $L_{\operatorname{ring}}$-formula over $\dcl(A)\cap\C$.
As $K:=\acl(A)\cap\C$ is an algebraically closed field it is an elementary substructure of $\C$ and hence $\psi(K)$ is nonempty.
\end{proof}

\bigskip
\section{Counterexamples in dimension $3$}
\label{sect-3dim}

\noindent
We show in this section that it is possible for $\logd^{-1}(p)$ to be almost $\C$-internal without splitting.
Indeed, this happens near any minimal $\C$-internal $p$ for which $p^{(3)}$ is weakly $\C$-orthogonal.

First a preparatory lemma.

\begin{lemma}
\label{gmaction}
Suppose $F$ is a differential field with algebraically closed constants and $p\in S(F)$ is a $\C$-internal and weakly $\C$-orthogonal minimal type.
\begin{itemize}
\item[(a)]
The binding group of $p$ is definably isomorphic to $\Ga(\C)$ if and only if there are $a\models p$ and $b\in F\langle a\rangle \setminus F^{\alg}$ such that $\delta b\in F$.
\item[(b)]
The binding group of $p$ is definably isomorphic to $\Gm(\C)$ if and only if there are $a\models p$ and $b\in F\langle a\rangle \setminus F^{\alg}$ such that $\logd b\in F$.
\end{itemize}
\end{lemma}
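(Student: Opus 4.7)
\emph{Plan.} I would prove (a) and (b) in parallel: the arguments are structurally identical with $\Gm$ replacing $\Ga$ and $\logd$ replacing $\delta$. For definiteness I focus on (a).

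For the \textbf{reverse direction}, assume $b \in F\langle a\rangle \setminus F^{\alg}$ with $c := \delta b \in F$, and set $q := \tp(b/F)$. Every $b' \models q$ satisfies $\delta b' = c$, hence $b' - b \in \C$; conversely, for any $c_0 \in \C$ the element $b + c_0$ realizes $q$ (being transcendental over $F$ and a solution to $\delta x = c$), so $q(\U) = b + \C$. Weak $\C$-orthogonality of $p$ gives $a \ind_F \C$, and since $b \in \dcl(Fa)$ we also have $b \ind_F \C$, so $q$ is itself weakly $\C$-orthogonal. The binding group $H := \Aut_F(q/\C)$ thus acts transitively on $q(\U)$ by translation, forcing $H \cong \Ga(\C)$. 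Since $b \notin F^{\alg} = \acl(F)$ and $p$ is minimal, Lemma~\ref{dclliason} supplies a surjective $F$-definable homomorphism $\phi : G := \Aut_F(p/\C) \to H \cong \Ga(\C)$ with finite kernel, so $\dim G = 1$, and by Fact~\ref{homspace}(1) $G$ acts regularly on $p(\U)$. Among connected one-dimensional algebraic groups over $\C$, only $\Ga$ admits a nontrivial homomorphism onto $\Ga$ (as $\Gm$ is reductive and elliptic curves are complete), so $G^0 \cong \Ga(\C)$. I would conclude $G = G^0$ by noting that $G/G^0$ would induce an $F$-definable finite equivalence relation on the strongly minimal set $p(\U)$, contradicting minimality unless it is trivial.

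For the \textbf{forward direction}, assume $G \cong \Ga(\C)$. I would apply Kolchin's theory of strongly normal extensions to $L := F\langle p(\U)\rangle$, which is strongly normal over $F$ with Galois group $G \cong \Ga(\C)$. The classical classification identifies such extensions as Picard--Vessiot extensions by a single antiderivative: there is $b \in L$, transcendental over $F^{\alg}$, with $\delta b \in F$, and $L$ is generated over $F \cdot (L\cap \C)$ by $b$. To place $b$ inside $F\langle a\rangle$, I would use that $p(\U) = G \cdot a \subseteq \dcl(F, a, \C)$, whence $L \subseteq F\langle a\rangle\cdot\C$; writing $b = h(a,d_1,\dots,d_n)$ for some $F$-definable $h$ and constants $d_i$, and exploiting that any two solutions to $\delta x = \delta b$ in $L$ differ by an element of $L\cap \C$, I would modify $b$ by a suitable $c_0 \in L\cap\C$ to cut down the constant parameters $d_i$ to $\C \cap F$, which is algebraically closed by hypothesis and so already lies in $F$. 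This places $b\in F\langle a\rangle$, completing the forward direction.

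The \textbf{main obstacle} is the descent step in the forward direction: ensuring that the antiderivative $b$ produced by differential Galois theory genuinely lies in $F\langle a\rangle$ rather than in a constant extension thereof. The hypothesis that $\C \cap F$ is algebraically closed is precisely what powers this descent---via Lemma~\ref{inc}, one can successively replace constant parameters by $F$-rational ones. I expect the technical heart of the proof to lie in organizing this descent cleanly, perhaps via the binding groupoid picture of Lemma~\ref{normal}, so as to avoid an explicit appeal to Kolchin's structure theorem and instead derive the existence of $b\in F\langle a\rangle$ directly from the $F$-definability of the $\Ga(\C)$-torsor structure on $p(\U)$.
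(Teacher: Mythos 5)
Your \emph{reverse direction} is essentially the paper's: you pass to $q=\tp(b/F)$, identify its binding group with $\Ga(\C)$ (resp.\ $\Gm(\C)$), and pull back via the surjective finite-kernel map from Lemma~\ref{dclliason}. Your detour through $G^0$ is unnecessary -- the paper simply notes that $\Ga$ has no proper finite covers (and $\Gm$ only covers of the form $[n]$) -- but it reaches the same conclusion. Note that you should also make explicit the observation that $\delta x=\gamma$ has no solutions in $F^{\alg}$ (otherwise $b$ minus that solution would be a new constant in $F\langle a\rangle$, contradicting weak $\C$-orthogonality); this is what lets you identify $q(\U)$ as the whole torsor.

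Your \emph{forward direction} takes a genuinely different route, and it is there that the gap lies. The paper does \textbf{not} invoke the classification of strongly normal extensions of $F$. Instead it first shows, directly from the $\Ga(\C)$-torsor structure, that the one-variable differential function field $F\langle a\rangle$ admits infinitely many differential automorphisms over $F$ (because $p(\U)\subseteq\dcl(F,a,\C)$ forces $S(F\langle a\rangle)$ to be infinite, and any $e\in S(F\langle a\rangle)\setminus F^{\alg}$ gives an automorphism $a\mapsto e$). It then applies Kolchin's Theorem~7 of \cite{kolchin53} to $F\langle a\rangle/F$ -- a theorem about automorphism groups of differential algebraic function fields of a \emph{single} variable -- to produce $b\in F\langle a\rangle\setminus F^{\alg}$ satisfying one of the three Liouvillian normal forms, and finally rules out the wrong ones via Lemma~\ref{dclliason}. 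Crucially, this theorem requires that $F\langle a\rangle$ and $F$ have the \emph{same constants}, which holds precisely by the weak $\C$-orthogonality of $p$.

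Your version runs strongly normal Galois theory on $L:=F\langle p(\U)\rangle$ instead, and here the constants are the problem, not a technicality you can defer. Since $p$ is $\C$-internal, some $p^{(m)}$ is \emph{not} weakly $\C$-orthogonal (Lemma~\ref{aim}), so $L\cap\C\supsetneq F\cap\C$. Consequently $L/F$ is not strongly normal in the sense required; the Galois correspondence is for $L$ over $F\cdot(L\cap\C)$, and the classification yields $b$ with $\delta b\in F\cdot(L\cap\C)$, not $\delta b\in F$. Your descent step would therefore have to move \emph{both} $b$ into $F\langle a\rangle$ \emph{and} $\delta b$ down into $F$, while also preserving the condition $b\notin F^{\alg}$ -- and merely adding elements of $L\cap\C$ to $b$, or applying Lemma~\ref{inc} to the equation $\delta h(a,z)=\gamma$, does not control the last two requirements simultaneously. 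So while the broad instinct (that descent ought to be possible, powered by the algebraic closedness of $F\cap\C$) is reasonable, the argument as written does not close; the paper's choice to work inside $F\langle a\rangle$ from the start, where no new constants appear, is exactly what sidesteps this.
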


\begin{proof}
Let $S:=p(\U)$.
We know by weak $\C$-orthogonality that $S$ is a strongly minimal set acted upon transitively by the binding group $G:=\Aut_F(p/\C)$.

First, suppose there are $a\models p$ and $b\in F\langle a\rangle \setminus F^{\alg}$ such that $\gamma:=\delta b\in F$.
Since $p$ is weakly $\C$-othogonal, so is $q(x):=\tp(b/F)$.
If $\beta\in F^{\alg}$ satisfied $\delta \beta=\gamma$, then $b-\beta$ would be a constant that is clearly dependent on $b$ over $F$.
Hence the formula $\delta x=\gamma$ has no realisations in $F^{\alg}$.
So this formula isolates $q(x)$ and $\Aut_F(q/\C)$ is definably isomorphic to $\Ga(\C)$.
But by Lemma~\ref{dclliason} there is a surjective definable homomorphism $\phi:G\to\Aut_F(q/\C)$ with finite kernel.
As the additive group has no proper finite covers, $\phi$ must be an isomorphism, and we have that $G$ is definably isomorphic to $\Ga(\C)$, as desired.

Similarly, if $\gamma:=\logd b\in F$, then $q$ will be isolated by $\logd x=\gamma$ and will have binding group $\Gm(\C)$.
We obtain a definable homomorphism $\phi:G\to\Gm(\C)$ that is surjective and with finite kernel.
But as the only finite covers of the multiplicative group are those of the form $[n]:\Gm\to\Gm$ that raise to the $n$th power, it follows that $G$ is definably isomorphic to $\Gm(\C)$ in this case.

For the converse directions of both parts, we first show how the $1$-dimensionality of $G$ implies the existence of infinitely many automorphisms of the differential field $F\langle a\rangle$ that fix $F$ pointwise.
Indeed, the action of $G$ on $S$ is uniquely transitive by Fact~\ref{homspace}.
So if we fix $a\in S$ then $S\subseteq\dcl(F,a,\C)$.
We thus have an $F\langle a\rangle$-definable embedding $f:S\to\C^n$ for some $n>0$.
It follows that the generic type of $S$ over $F\langle a\rangle$ is not isolated and so $S(F\langle a\rangle^{\alg})$ is infinite.
But
$f\big(S(F\langle a\rangle^{\alg})\big)\subseteq \dcl\big(F\langle a\rangle^{\alg}\cap\C\big)=\dcl(F\cap\C)$
by the weak $\C$-orthogonality of~$p$ and the algebraic closedness of $F\cap\C$.
It follows that $S(F\langle a\rangle^{\alg})=S(F\langle a\rangle)$, and so the latter is infinite.
Note that if $e\in S(F\langle a\rangle)\setminus F^{\alg}$ then because $p$ is minimal we must have $a\in S(F\langle e\rangle^{\alg})=S(F\langle e\rangle)$.
In particular, $F\langle a\rangle= F\langle e\rangle$, and $a\mapsto e$ induces an automorphism of the differential field $F\langle a\rangle$ that fixes $F$ pointwise.
We have shown there are infinitely many such automorphisms, as desired.

This allows us to apply a theorem
of Kolchin~\cite[Theorem~7]{kolchin53} (see also~\cite{matsuda}),
to conclude\footnote{Note that $F$ is relatively algebraically closed in $F\langle a\rangle$ as $p$ is stationary, and the constants of $F\langle a\rangle$ are the same as those of $F$ as $p$ is weakly $\C$-orthogonal -- so Kolchin's theorem does apply.}
 that there exists $b\in F\langle a\rangle\setminus F^{\alg}$ such that
\begin{itemize}
\item[(1)]
$\gamma:=\delta b\in F$, or
\item[(2)]
$\gamma:=\logd b\in F$, or
\item[(3)]
$(\delta b)^2=\lambda b(b^2-1)(b-\kappa)$ for some nonzero $\lambda,\kappa\in F$.
\end{itemize}
Letting $q:=\tp(b/F)$  we have that $q(x)$ is the generic type of the formula
\begin{itemize}
\item[(1)]
$\delta x=\gamma$, or
\item[(2)]
$(\delta x=\gamma x)\wedge(x\neq 0)$, or
\item[(3)]
$(\delta x)^2=\lambda x(x^2-1)(x-\kappa)$,
\end{itemize}
respectively.
Moreover, $q$ is weakly $\C$-orthogonal.
As we have seen, it then follows that if we are in case~(1) the binding group of $q$ is definably isomorphic to $\Ga(\C)$, and if we are in case~(2) it is definably isomorphic to $\Gm(\C)$.
It is also well known that  in case~(3) the binding group of $q$ cannot be either $\Ga(\C)$ or $\Gm(\C)$, see for example the proof of Proposition~5.28 in~\cite{jinthesis} for an explicit argument.
Now, by Lemma~\ref{dclliason} there is a definable homomorphism $\phi:G\to\Aut_F(q/\C)$ that is surjective and has finite kernel.
If $G$ is definably isomorphic to $\Ga(\C)$ then $\phi$ is an isomorphism and so $\Aut_F(q/\C)$ is also $\Ga(\C)$ and we are in case~(1).
Similarly, if $G$ is $\Gm(\C)$ then so is $\Aut_F(q/\C)$ and we are in case~(2).
\end{proof}

Next we point out that case~(3) of Fact~\ref{homspace}, where $\operatorname{PSL}_2(\C)$ acts on $\mathbb P(\C)$ by projective transformations, is realised as a binding group in differentially closed fields.
It is the binding group of the equation $\delta x=f(x)$ where $f$ is the differentially generic degree $2$ polynomial.
Indeed, this is probably well-known and the following argument follows suggestions to the first author by Anand Pillay.

\begin{example}
\label{psl2}
Fix independent differentially transcendental $\alpha_0,\alpha_1,\alpha_2$, and let $p(z)$ be the generic type of $\delta z=\alpha_2z^2+\alpha_1z+\alpha_0$ over $\mathbb Q\langle\alpha_1,\alpha_2,\alpha_3\rangle$.
Then $p$ is $\C$-internal with binding group isomorphic to $\operatorname{PSL}_2(\C)$.
\end{example}

\begin{proof}
Consider first the generic type $q(x,y)\in S_2(F)$ of the following system of linear differential equations:
$
\begin{Bmatrix}
x'=\beta_{1,1}x+\beta_{1,2}y\\
y'=\beta_{2,1}x+\beta_{2,2}y
\end{Bmatrix}
$
where the $\beta_{i,j}$ are independent differentially transcendental elements and $F:=\mathbb Q\langle \beta_{1,1}, \beta_{1,2}, \beta_{2,1}, \beta_{2,2}\rangle$.
Then $q$ is $\C$-internal by linearity.
We show that $\Aut_F(q/\C)$ is isomorphic to $\operatorname{GL}_2(\C)$.
Fix a pair of independent realisations $(a_1,b_1),(a_2,b_2)$, and form the matrix
$V=
\begin{pmatrix}
a_1 &a_2\\
b_1 &b_2
\end{pmatrix}.
$
For notational convenience, we write that $V\models q^{(2)}$ and that $\delta V=BV$ where
$B=(\beta_{ij})$.
By genericity of $q$ and independence of the ralisations, the entries of $V$ form an algebraically independent set of transcendental elements over $F$.
In particular, $V$ is invertible.
Given $\sigma\in \Aut_F(q/\C)$, we have $\delta V^\sigma=BV^\sigma$.
It follows that $M_\sigma:=V^{-1}V^\sigma\in\operatorname{GL}_2(\C)$, and that $\sigma\mapsto M_\sigma$ is a homomorphism from $\Aut_F(q/\C)$ to $\operatorname{GL}_2(\C)$ defined over $F(V):=F(a_1,b_1,a_2,b_2)$.
It is injective because the columns of $V$ form a basis for the solution set to the linear differential system over $\C$, and so if $V^\sigma=V$ then $\sigma=\id$.
So much is true for any $2\times 2$ system of linear differential equations.
For surjectivity we will use the differentially-generic choice of the entries of $B$.
Namley, since $B=(\delta V)V^{-1}$, the entries of $V$ are also independent differential transcendentals, and so $q^{(2)}$ is weakly $\C$-orthogonal.
In particular, if $M\in\operatorname{GL}_2(\C)$ is arbitrary then $V$ is independent of $M$ over $F$, and so the columns of $W:=VM$, which are clearly solutions to the linear differential system, are in fact  independent generic solutions.
That is, $W\models q^{(2)}$.
By weak $\C$-orthogonality again, $W=V^\sigma$ for some $\sigma\in \Aut_F(q/\C)$, and hence $M=M_\sigma$.
This completes the proof that $\Aut_F(q/\C)$ is isomorphic to $\operatorname{GL}_2(\C)$.

Now, consider $(a,b)\models q$, set $e:=\frac{a}{b}$, and let $p(z):=\tp(e/F)$.
A direct computation shows that
$$\delta e=-\beta_{2,1}e^2+(\beta_{1,1}-\beta_{2,2})e+\beta_{1,2}.$$
Setting $\alpha_0:=\beta_{1,2}$, $\alpha_1:=\beta_{1,1}-\beta_{2,2}$, and $\alpha_2:=-\beta_{2,1}$, we have that $p(z)$ is the generic type of $\delta z=\alpha_2z^2+\alpha_1z+\alpha_0$ over $F$, and $\{\alpha_0,\alpha_1,\alpha_2\}$ are independent differentially transcendental elements.
By Lemma~\ref{dclliason} we have an induced $F$-definable surjective homomorphism $\phi:\Aut_F(q/\C)\to\Aut_F(p/\C)$.

It is clear that under the identification of $\Aut_F(q/\C)$ with $\operatorname{GL}_2(\C)$ given above, $\Gm(\C)\subseteq\ker(\phi)$.
For the reverse containment, suppose $\sigma\in\ker(\phi)$.
Then
\begin{eqnarray*}
\delta\left(\frac{\sigma(a)}{a}\right)
&=&
\frac{a(\beta_{1,1}\sigma(a)+\beta_{1,2}\sigma(b))-\sigma(a)(\beta_{1,1}a+\beta_{1,2}b)}{a^2}\\
&=&
\frac{\beta_{1,2}(a\sigma(b)-\sigma(a)b)}{a^2}\\
&=&
0\ \ \ \ \ \ \ \ \ \ \ \ \text{ since $\sigma\left(\frac{a}{b}\right)=\frac{a}{b}$.}
\end{eqnarray*}
Hence $\sigma(a)=ca$ for some constant $c\in\C$.
Since $\sigma\left(\frac{a}{b}\right)=\frac{a}{b}$, we have $\sigma(b)=cb$ as well.
In principle, this constant $c$ may depend on the choice of $(a,b)$.
So let $c_1,c_2\in\mathcal C$ be such that $\sigma(a_1,b_1)=c_1(a_1,b_1)$ and $\sigma(a_2,b_2)=c_2(a_2,b_2)$, where $(a_1,b_1),(a_2,b_2)$ are the independent realisations of $q$ chosen before.
Then $(a_1+a_2,b_1+b_2)\models q$ as well, and we get  $\sigma(a_1+a_2,b_1+b_2)=d(a_1+a_2,b_1+b_2)$ for some $d\in\mathcal C$.
The $\C$-linear independence of $(a_1,b_1), (a_2,b_2)$ implies that $d=c_1=c_2$.
Hence,
$V^\sigma=
d
\begin{pmatrix}
a_1 &a_2\\
b_1 &b_2
\end{pmatrix}
$
and so
$\displaystyle M_\sigma
=
V^{-1}V^\sigma
=
\begin{pmatrix}
d &0\\
0 &d
\end{pmatrix}
\in\Gm(\C)$.
We have shown that $\ker(\phi)=\Gm(\C)$.
It follows $\Aut_F(p/\C)$ is isomorphic to $\operatorname{PSL}_2(\C)$, as desired.
\end{proof}

Fix a differential field $F$ and a minimal type $p\in S_1(F)$ that is $\C$-internal and has a $3$-dimensional binding group.
By the above example such an $F$ and $p$ exist.

\begin{proposition}
\label{inf1}
Suppose $(a_1,a_2,a_3)\models p^{(3)}$.
There exists an extension $F'$ of $F$ by constants and $v\in F'\langle a_1,a_2,a_3\rangle\setminus F'\langle a_1\rangle^{\alg}$ such that  $\logd v\in F'\langle a_1\rangle$.
\end{proposition}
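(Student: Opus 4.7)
The idea is to exploit the solvable structure of the point-stabiliser in the $\operatorname{PSL}_2$-action: descend from $\operatorname{PSL}_2$ to its Borel subgroup $\Ga\rtimes\Gm$ by localising at $a_1$, and from there to $\Gm$ by modding out by translations, so as to apply Lemma~\ref{gmaction}(b).

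By Fact~\ref{homspace} and the hypothesis $\dim G=3$, where $G:=\Aut_F(p/\C)$, the action of $G$ on $p(\U)$ is, over possibly additional parameters, definably isomorphic to the standard projective action of $\operatorname{PSL}_2(\C)$ on $\mathbb P(\C)$. Set $F_1:=F\langle a_1\rangle$ and let $p_1:=\tp(a_2/F_1)$ be the nonforking extension of $p$; its binding group $G_1:=\Aut_{F_1}(p_1/\C)$ is the stabiliser of $a_1$ in $G$, which under the isomorphism above is a Borel subgroup, concretely $H:=\Ga\rtimes\Gm$ acting on $\mathbb P(\C)\setminus\{a_1\}$ by affine transformations. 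By Corollary~\ref{dimliason-cor} applied to $p$, the type $p^{(3)}$ is weakly $\C$-orthogonal, and hence so is $p_1^{(2)}$; since the affine group acts simply transitively on ordered pairs of distinct points, the induced $G_1$-action on $p_1^{(2)}(\U)$ is regular.

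The central step is to produce a $\C$-internal quotient type with binding group $\Gm(\C)$. The translation subgroup $\Ga\trianglelefteq H$ is a proper normal algebraic subgroup over the constants; by Lemma~\ref{normal} its image $L\subseteq G_1$ is definable over $F_1\cup\C$, hence definable over $F'\langle a_1\rangle$ for some extension $F':=F\cdot K$ of $F$ by (algebraically closed) constants. Applying Lemma~\ref{quotientliason} to the regular action of $G_1$ on $p_1^{(2)}(\U)$ together with the $F'\langle a_1\rangle$-definable normal subgroup $L$ produces $e\in\dcl(F'\langle a_1\rangle,a_2,a_3)$ whose type $q:=\tp(e/F'\langle a_1\rangle)$ has binding group $G_1/L\cong\Gm(\C)$. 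The type $q$ is weakly $\C$-orthogonal, inherited from $p_1^{(2)}$, and is minimal since its orbit under the $1$-dimensional binding group is $1$-dimensional, so Lemma~\ref{gmaction}(b) applies.

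Applying Lemma~\ref{gmaction}(b) to $q$ over $F'\langle a_1\rangle$, and transporting to our specific realisation $e$ via an automorphism over $F'\langle a_1\rangle$, we obtain $v\in F'\langle a_1\rangle\langle e\rangle\setminus F'\langle a_1\rangle^{\alg}$ with $\logd v\in F'\langle a_1\rangle$. Since $e\in\dcl(F'\langle a_1,a_2,a_3\rangle)$, this $v$ lies in $F'\langle a_1,a_2,a_3\rangle$, as required. The main technical point is the middle step: identifying the translations of $G_1$ as an $F'\langle a_1\rangle$-definable normal subgroup (via Lemma~\ref{normal} and a constant extension) and verifying regularity of the $G_1$-action on pairs, both of which ultimately rest on the dimension-$3$ hypothesis together with Corollary~\ref{dimliason-cor}.
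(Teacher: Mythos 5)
Your proposal is correct and follows essentially the same route as the paper: pass to $F\langle a_1\rangle$, recognise the binding group of the nonforking extension as the $2$-dimensional affine group, descend the translation subgroup over a constant extension via Lemma~\ref{normal}, apply Lemma~\ref{quotientliason} to the regular action on pairs to get $e$ with binding group $\Gm(\C)$, and finish with Lemma~\ref{gmaction}(b). The only cosmetic difference is that you obtain the $2$-dimensional affine structure by identifying $G_1$ directly as the point-stabiliser (Borel subgroup) in the $\operatorname{PSL}_2$-picture, whereas the paper derives $\dim G_1=2$ from the weak-orthogonality pattern of Morley powers via Lemma~\ref{dimliason} and then re-invokes Fact~\ref{homspace}.
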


\begin{proof}
Let $q$ be the nonforking extension of $p$ to $F\langle a_1\rangle$, and let $G=\Aut_{F\langle a_1\rangle}(q/\C)$ be the binding group of $q$.
Since $p^{(2)}$ is weakly orthogonal to~$\C$ we get that $q$ is weakly orthogonal to~$\C$, and hence $q^{\U}$ is a strongly minimal set that is acted upon transitively by $G$.
Since $p^{(3)}$ is weakly orthogonal to~$\C$ but $p^{(4)}$ is not (by~\ref{dimliason-cor}), we have that $q^{(2)}$ is weakly orthogonal to $\C$ but $q^{(3)}$ is not.
Lemma~\ref{dimliason} applied to~$q$ thus implies that $\dim G=2$.

Now, Fact~\ref{homspace} tells us that the action of $G$ on $q^{\U}$ is definably isomorphic to $\Gm(\C)\ltimes\Ga (\C)$ acting on $\C$ by affine transformations.
As this action is sharply $2$-transitive, we have that $G$ acts regularly on the set of realisations of $q^{(2)}$.
Moreover, we have a normal definable subgroup $L$ of $G$ such that $G/L$ is definably isomorphic to $\Gm(\C)$.
In principle the parameters from $L$ could come from anywhere.
But as $L$ is normal, Lemma~\ref{normal} applies to $q^{(2)}$ to give us that $L$ is defined over $F\langle a_1\rangle\cup\C$.
 
Let $F'$ be an extension of $F$ by constants such that $F'\cap\C$ is algebraically closed and $L$ is defined over $K:=F'\langle a_1\rangle$.
Let $r$ be the nonforking extension of $q^{(2)}$ to $K$.
Because $q^{(2)}$ is weakly orthogonal to $\C$, we have that $r=\tp(a_2,a_3/K)$ is implied by~$q^{(2)}$.
Hence $G=\Aut_{K}(r/\C)$ and it acts regularly on $r^{\U}$.
As $L$ is a $K$-definable subgroup of $G$, Lemma~\ref{quotientliason} applies to $r$, and we get $e\in K\langle a_2,a_3\rangle$ such that the binding group of $\tp(e/K)$ is $G/L=\Gm(\C)$.
Lemma~\ref{gmaction}(b) applies\footnote{Note that by weak $\C$-orthgonality of $p$ the constants of $K$ are those of $F'$ and hence algebraically closed.}
 to $\tp(e/K)$ and there must exist $v\in K\langle e\rangle \setminus K^{\alg}$ such that $\logd v\in K$.
As $K=F'\langle a_1\rangle^{\alg}$ and $K\langle e\rangle\subseteq F'\langle a_1,a_2,a_3\rangle$, this proves the proposition.
\end{proof}

We can now produce a $\C$-internal minimal $1$-type whose logarithmic-differential pullback is $\C$-internal but does not split.
Namely, with notation as in the proposition, set $b:=\logd v$ and consider $p':=\tp(b/F')$.
Then $b$ is in the definable closoure of $a_1$ over $F'$, and hence $p'$ is minimal and $\C$-internal.
The logarithmic-differential pullback $\logd^{-1}(p')=\tp(v/F')$ is also $\C$-internal as $v\in F'\langle  a_1,a_2,a_3\rangle$.
If $\logd^{-1}(p')$ splits then we can factor some nontrivial integer power of $v$ in $\Gm$ as $v^k=w_1w_2$ where $w_1\in F'\langle b\rangle$ and $\logd w_2\in F'$.
It follows that $w_2\in F'\langle v\rangle$, and so $r:=\tp(w_2/F')$ is $\C$-internal .
Note that if $w_2\in F'^{\alg}$, then as $w_1\in F'\langle b\rangle\subseteq F'\langle a_1\rangle$, we would have $v\in F'\langle a_1\rangle^{\alg}$.
As that is not the case, we must have $w_2\notin F'^{\alg}$.
So Lemma~\ref{gmaction}(b) implies that $\Aut_{F'}(r/\C)$ is definably isomorphic to $\Gm(\C)$.
But there is a surjective $F'$-definable homomorphism $\Aut_{F'}\big(\logd^{-1}(p')/\C\big)\to \Aut_{F'}(r/\C)$ by Lemma~\ref{dclliason}.
That is, $\Aut_{F'}\big(\logd^{-1}(p')/\C\big)$ has $\Gm(\C)$ as a definable quotient.
On the other hand, $v\in F'\langle  a_1,a_2,a_3\rangle$ implies that $\Aut_{F'}\big(\logd^{-1}(p')/\C\big)$ is in turn a definable quotient of $\Aut_{F'}\big(p^{(3)}|_{F'}/\C\big)=\Aut_{F}(p/\C)$, where the last equality uses the weak $\C$-orthogonality of $p^{(3)}$ and the fact that $F'$ is an extension of $F$ by constants.
We have shown that $\Gm(\C)$ is a definable quotient of $\Aut_{F}(p/\C)$.
But this is impossible as $\Aut_{F}(p/\C)$ is definably isomorphic to $\operatorname{PSL}_2(\C)$ by Fact~\ref{homspace}.
So $\logd^{-1}(p')$ does not split.

We have established the following.

\begin{theorem}
\label{3dim}
For any differential field $F$ and minimal $\C$-internal type $p\in S_1(F)$ with $\Aut_F(p/\C)$ $3$-dimensional, there exists a minimal $1$-type $p'$, in the definable closure of a nonforking extension of $p$, such that $\logd^{-1}(p')$ is $\C$-internal but does not split.
Moreover, such $p$ and $F$ exist.
\end{theorem}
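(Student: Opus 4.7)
The plan is to combine Example~\ref{psl2} with Proposition~\ref{inf1}. Example~\ref{psl2} immediately handles the existence statement: for $F = \mathbb Q\langle\alpha_0,\alpha_1,\alpha_2\rangle$ with independent differentially transcendental $\alpha_i$, and $p$ the generic type of $\delta x=\alpha_2 x^2+\alpha_1 x+\alpha_0$, that example shows $\Aut_F(p/\C)\cong\operatorname{PSL}_2(\C)$, which is three-dimensional.

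For the main assertion, fix any $F$ and minimal $\C$-internal $p\in S_1(F)$ whose binding group is three-dimensional, and pick a Morley sequence $(a_1,a_2,a_3)\models p^{(3)}$. Proposition~\ref{inf1} then supplies an extension $F'/F$ by constants and an element $v\in F'\langle a_1,a_2,a_3\rangle\setminus F'\langle a_1\rangle^{\alg}$ with $b:=\logd v\in F'\langle a_1\rangle$. I would set $p':=\tp(b/F')$. Since $b\in\dcl(F',a_1)$, the type $p'$ sits in the definable closure of the nonforking extension of $p$ to $F'$, and so is itself minimal and $\C$-internal. The pullback $\logd^{-1}(p')=\tp(v/F')$ is $\C$-internal because $v\in\dcl(F',a_1,a_2,a_3)$ and $(a_1,a_2,a_3)$ is a fundamental system of solutions for the $\C$-internal type~$p$.

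The main obstacle is showing that $\logd^{-1}(p')$ does not split; the plan there is to derive an impossible abelian quotient of $\operatorname{PSL}_2(\C)$. Assume for contradiction that $v^k=w_1w_2$ for some nonzero $k$, with $w_1\in\dcl(F',b)\subseteq F'\langle a_1\rangle$ and $\logd w_2\in F'$. If $w_2\in F'^{\alg}$, then $v\in F'\langle a_1\rangle^{\alg}$, contradicting the choice of $v$; hence $w_2\notin F'^{\alg}$, and by Lemma~\ref{gmaction}(b) the binding group of $\tp(w_2/F')$ is definably isomorphic to $\Gm(\C)$. Applying Lemma~\ref{dclliason} first to $w_2\in\dcl(F',v)$ and then to $v\in\dcl(F',a_1,a_2,a_3)$ — and using that $F'/F$ is generated by constants together with the weak $\C$-orthogonality of $p^{(3)}$, so that $\Aut_{F'}\big(p^{(3)}|_{F'}/\C\big)=\Aut_F(p/\C)$ — would exhibit $\Gm(\C)$ as a definable quotient of $\operatorname{PSL}_2(\C)$. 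This is impossible, since $\operatorname{PSL}_2(\C)$ is simple and non-abelian, so has no surjective homomorphism onto $\Gm(\C)$. The delicate point in this last step is keeping track of which base one is working over so that the chain of quotient homomorphisms genuinely lands back on $\Aut_F(p/\C)$; this is where the fact that $F'$ only extends $F$ by constants is essential.
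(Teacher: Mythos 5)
Your proposal is correct and follows essentially the same path as the paper: you invoke Example~\ref{psl2} for existence, Proposition~\ref{inf1} to produce $v$ and $F'$, set $p':=\tp(\logd v/F')$, and derive a contradiction from a splitting by exhibiting $\Gm(\C)$ as a definable quotient of $\operatorname{PSL}_2(\C)$ via the same two applications of Lemma~\ref{dclliason}. The paper's writeup is a bit more explicit about the chain $\Aut_{F'}\big(p^{(3)}|_{F'}/\C\big)\twoheadrightarrow\Aut_{F'}\big(\logd^{-1}(p')/\C\big)\twoheadrightarrow\Aut_{F'}(r/\C)\cong\Gm(\C)$, but the content is the same.
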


\bigskip
\section{Dimension not $3$}
\label{sect-not3dim}

\noindent
We know by Fact~\ref{homspace} that a minimal $\C$-internal type has binding group of dimension at most $3$.
We have already considered the case of dimension $3$.
What remains is to consider the remaining three possibilities.

\medskip
\subsection{The dimension $2$ case}
\label{sect-2dim}

\noindent
Fix a differential field $F$.
In this section we consider minimal $\C$-internal types $p\in S_1(F)$ with $2$-dimensional binding groups.
We will show that in that case, if $\logd^{-1}(p)$ is almost $\C$-internal then it splits.

We begin with a general lemma about dependence over logarithmic derivatives.

\begin{lemma}
\label{gu1u2}
Suppose $(u_1,\dots,u_n)$ is a sequence of nonzero elements, and $a_i:=\logd u_i$ for $i=1,\dots,n$.
If the elements $u_1,\dots,u_n$ are field-theoretically algebraically dependent over $\acl(F,\C,a_1,\dots,a_n)$ then they are multiplicatively dependent.
That is, for some $e \in\acl(F,\C,a_1,\dots,a_n)$ and integers $\rho_1,\dots,\rho_n$ not all zero, $e u_1^{\rho_1}\cdots u_n^{\rho_n}=1$.
\end{lemma}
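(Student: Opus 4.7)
The plan is to extract a nontrivial multiplicative relation from the binding group of the joint type $p := \tp(u_1, \dots, u_n/K)$, where $K := \acl(F, \C, a_1, \dots, a_n)$, by exploiting the classification of algebraic subgroups of $\Gm^n$. First, $p$ is $\C$-internal: since $\logd u_i = a_i \in K$, each $\tp(u_i/K)$ is either algebraic or the generic type of the strongly minimal equation $\delta x = a_i x$, whose binding group is $\Gm(\C)$ by Lemma~\ref{gmaction}(b). Moreover $p$ is weakly $\C$-orthogonal because $\C \subseteq K$. For any $\sigma \in \Aut(\U/K)$, the quotient $\sigma(u_i)/u_i$ has logarithmic derivative zero and so lies in $\C^\times$; this produces a homomorphism $\psi : G := \Aut_K(p/\C) \to \Gm(\C)^n$. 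It is injective because an element fixing each $u_i$ also fixes every realisation $(c_1 u_1, \dots, c_n u_n)$ of $p$ (the $c_i \in \C \subseteq K$ being fixed). Via stable embeddedness (using $\dcl(K) \cap \C = \C$), the image of $\psi$ is identified with $H(\C)$ for some algebraic subgroup $H$ of $\Gm^n$ defined over $\C$.

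Next comes the dimension count. Weak $\C$-orthogonality of $p$ implies that $G$ acts transitively on $p(\U)$, so $p(\U) = H(\C) \cdot (u_1, \dots, u_n)$; since the coordinate-wise multiplicative action of $\Gm^n$ on a nonzero tuple has trivial stabiliser, this orbit is algebraically isomorphic to $H$. Consequently $\dim H$ equals the transcendence degree of $K(u_1, \dots, u_n)$ over $K$, which by hypothesis is strictly less than $n$. Therefore $H$ is a proper algebraic subgroup of $\Gm^n$, and in particular its identity component $H^0$ is a proper subtorus. The classification of subtori of $\Gm^n$ then yields integers $\rho_1, \dots, \rho_n$, not all zero, such that $\prod c_i^{\rho_i} = 1$ for every $(c_1, \dots, c_n) \in H^0$.

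Finally, set $w := u_1^{\rho_1} \cdots u_n^{\rho_n}$. For any $\sigma \in \Aut(\U/K)$, writing $c_{i,\sigma} := \sigma(u_i)/u_i \in \C^\times$, we have $\sigma(w)/w = \prod c_{i,\sigma}^{\rho_i}$, which equals $1$ whenever $(c_{1,\sigma}, \dots, c_{n,\sigma}) \in H^0$ and is otherwise a root of unity, since the chosen character factors through the finite group $H/H^0$. Hence $w$ has finite $\Aut(\U/K)$-orbit, placing it in $\acl(K) = K$. Taking $e := w^{-1} \in K$ yields the required relation $e u_1^{\rho_1} \cdots u_n^{\rho_n} = 1$.

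The main technical obstacle I anticipate is the dimension identification $\dim H = \operatorname{tr.deg}_K K(u_1, \dots, u_n)$, which must juggle three ingredients at once: the transitivity of $G$ on $p(\U)$ coming from weak $\C$-orthogonality, the triviality of the coordinate-multiplication stabiliser, and a careful matching of dimensions between a $K$-definable subset of the universe and an algebraic group defined over $\C$. Each ingredient is standard, but their combination is the delicate heart of the argument.
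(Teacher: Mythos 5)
Your proof is correct, but it takes a genuinely different route from the one in the paper. The paper's argument is entirely elementary and self-contained: it picks a nonzero polynomial $P$ over $\acl(F,\C,a_1,\dots,a_n)$ vanishing at $(u_1,\dots,u_n)$ with the minimum number of terms, applies $\delta$, observes that $(\logd e_\nu + \nu\cdot a)P(u)-\delta P(u)$ is a shorter relation and hence must vanish identically, and reads off $\logd(e_\nu u^\nu)=\logd(e_\eta u^\eta)$ directly from the resulting coefficient identities, producing the multiplicative dependence from any two distinct monomials. Your approach instead recasts the statement in the language of binding groups: you embed $\Aut_K(\tp(\bar u/K)/\C)$ into $\Gm(\C)^n$, identify its image as the $\C$-points of an algebraic subgroup $H\leq\Gm^n$ via stable embeddedness, use transitivity (from weak $\C$-orthogonality) and the regularity of the coordinatewise action to match $\dim H$ with $\operatorname{tr.deg}_K K(\bar u)$, and then appeal to the classification of subtori to produce a character $\chi$ trivial on $H^0$; finiteness of $H/H^0$ then forces $\prod u_i^{\rho_i}\in\acl(K)=K$. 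What your approach buys is conceptual clarity and naturality within the binding-group framework that pervades the rest of the paper, plus an argument that visibly generalises (e.g.\ to other semiabelian ambient groups). What the paper's approach buys is that it needs nothing beyond the Leibniz rule and a minimality trick on polynomial supports — no stable embeddedness, no structure theory of tori, no careful matching of Morley rank against algebraic dimension (which you correctly flag as the delicate step). Both are valid; yours is a higher-powered rederivation of the same fact.
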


\begin{proof}
Let $u:=(u_1,\dots,u_n)$ and $a:=(a_1,\dots,a_n)$.
Let $P(x)$ be a nonzero polynomial over $\acl(F,\C,a)$ in variables $x=(x_1,\dots,x_n)$ such that $P(u)=0$, and which has the minimum number of terms among all nonzero polynomials over $\acl(F,\C,a)$ with this property.
Write $P(x)$ in multi-index notation as $\displaystyle P(x)=\sum_{\eta\in I}e_\eta x^\eta$ where the $e_\eta$ are nonzero elements of $\acl(F,\C,a)$.
Since
$$\logd(u^\eta)=\eta_1a_1+\cdots+\eta_na_n=:\eta\cdot a,$$
we compute
\begin{eqnarray*}
\delta P(u)
&=&
\sum_{\eta\in I}\big((\delta e_\eta) u^\eta+e_\eta \delta(u^\eta)\big)\\
&=&
\sum_{\eta\in I}\big((\logd e_\eta)e_\eta u^\eta+e_\eta \logd(u^\eta)u^\eta\big)\\
&=&
\sum_{\eta\in I}(\logd e_\eta+\eta\cdot a)e_\eta u^\eta.
\end{eqnarray*}
Now fix $\nu\in I$.
Then, as $P(u)=0$,
\begin{eqnarray*}
0
&=&
(\logd e_\nu+\nu\cdot a)P(u)-\delta P(u)\\
&=&
\sum_{\eta\in I}\big((\logd e_\nu+\nu\cdot a)-(\logd e_\eta+\eta\cdot a)\big)e_\eta u^\eta\\
\end{eqnarray*}
and $\displaystyle Q(x):=\sum_{\eta\in I}\big((\logd e_\nu+\nu\cdot a)-(\logd e_\eta+\eta\cdot a)\big)e_\eta x^\eta$ is a polynomial over $\acl(F,\C,a)$ with at least one less term than $P(x)$.
By minimality, $Q(x)=0$.
That is, $(\logd e_\nu+\nu\cdot a)=(\logd e_\eta+\eta\cdot a)$ for all $\eta\in I$.
In other words, $\logd(e_\nu u^\nu)=\logd(e_\eta u^\eta)$ for all $\eta,\nu\in I$.
Note that $|I|>1$ since each $u_i$ is nonzero as are the $e_\eta$ for $\eta\in I$.
Fixing distinct $\eta,\nu\in I$, let $c\in\C$ be such that $e_\nu u^\nu=ce_\eta u^\eta$.
Letting $\rho:=\nu-\eta$ and $e:=\frac{ce_\eta}{e_\nu}$ we have that $e u^\rho=1$, as desired.
\end{proof}

\begin{proposition}
\label{w3}
Suppose $p\in S_1(F)$ is a minimal type such that $p^{(3)}$ is not weakly $\C$-orthogonal and $\logd^{-1}(p)$ is almost $\C$-internal.
Let $(a_1,a_2)\models p^{(2)}$.
There exist $w\in F\langle a_1,a_2\rangle$ and a nonzero integer $k$ such that $\logd w=k(a_2-a_1)$.
\end{proposition}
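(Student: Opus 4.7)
The plan is, for independent realisations $u_1, u_2$ of $\logd^{-1}(p)$ with $a_i := \logd u_i$, first to establish that $v := u_1/u_2$ lies in $\dcl(F,\C,a_1,a_2)$, and then to descend via a norm-and-factorisation argument to produce $w\in F\langle a_1,a_2\rangle$ with $\logd w = k(a_2-a_1)$.

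To obtain the first step, I would use Lemma~\ref{iai} to pass to interalgebraic $\C$-internal types for both $p$ and $\logd^{-1}(p)$, so that the binding group $G=\Aut_F(p/\C)$ is (by the section's $2$-dimensional context and Fact~\ref{homspace}(2)) definably isomorphic to $\Gm(\C)\ltimes\Ga(\C)$ acting sharply $2$-transitively on $p(\U)$. Letting $\tilde G$ denote the binding group of $\logd^{-1}(p)$ and $\pi\colon\tilde G\to G$ the projection induced by $\logd$, an element $\sigma\in\ker\pi$ fixes $p(\U)$ pointwise and acts on the $\Gm$-fibre over each $a\models p$ by multiplication by a constant $c_\sigma(a)\in\C^*$. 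For each $a$ the assignment $\sigma\mapsto c_\sigma(a)$ defines a definable character $\chi_a\colon\ker\pi\to\Gm(\C)$, exhibiting $\ker\pi$ as a definable subgroup of $\Gm(\C)$ with discrete character group. The definable map $a\mapsto\chi_a$ from the strongly minimal set $p(\U)$ into this discrete set must therefore be generically constant, so $c_\sigma(a_1)=c_\sigma(a_2)$ for our generic $a_1,a_2$ and every $\sigma\in\ker\pi$. Since $G$ acts sharply $2$-transitively, the stabiliser of $(a_1,a_2)$ in $\tilde G$ equals $\ker\pi$; hence $\sigma(v)=(c_\sigma(a_1)/c_\sigma(a_2))v=v$ for all such $\sigma$, yielding $v\in\dcl(F,\C,a_1,a_2)=K\cdot\C$ where $K:=F\langle a_1,a_2\rangle$.

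For the descent, after extending $F$ by the algebraic closure of its constants (undone at the very end via a norm), I may assume $F$, and hence $K$ by weak $\C$-orthogonality of $p^{(2)}$, has algebraically closed constants. I would then choose a tuple $\bar c$ of constants algebraically independent over $K$ with $v\in K(\bar c)^{\alg}$, and take the norm $N:=N_{K(\bar c,v)/K(\bar c)}(v)\in K(\bar c)$, which satisfies $\logd N=m(a_1-a_2)\in K$ where $m=[K(\bar c,v):K(\bar c)]$. Factoring $N=\alpha\prod_i P_i^{n_i}$ in the UFD $K[\bar c]$ with $\alpha\in K^*$ and $P_i$ distinct irreducibles with $n_i\ne 0$, the condition $\logd N\in K$ (no pole along any irreducible of $K[\bar c]$) forces $\delta P_i=\gamma_i P_i$ for some $\gamma_i\in K$; a coefficient comparison then yields $P_i=\beta_i\tilde P_i(\bar c)$ with $\beta_i\in K^*$ and $\tilde P_i\in\C[\bar x]$. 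Combining, $N=\beta\cdot c_0$ for some $\beta\in K$ and $c_0\in\C$, so $\logd\beta=m(a_1-a_2)$, and setting $w:=\beta^{-1}$ and $k:=m$ completes the argument.

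The hardest part will be the binding-group analysis in the second paragraph, specifically justifying the generic constancy of $a\mapsto\chi_a$; this rests on the strong minimality of $p(\U)$, the weak $\C$-orthogonality of $p$, and the discreteness of the character group of the definable subgroup $\ker\pi\le\Gm(\C)$.
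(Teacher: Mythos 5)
Your proposal takes a genuinely different route from the paper's, and the binding-group idea is appealing, but there is a significant gap in generality that prevents it from establishing the proposition as stated.

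The proposition only assumes that $p^{(3)}$ is not weakly $\C$-orthogonal, i.e.\ (via Corollary~\ref{dimliason-cor} and Lemma~\ref{iai}) that the binding group has dimension \emph{at most} $2$, and it is applied in Section~\ref{sect-not3dim} to all of the dimension $0$, $1$, and $2$ cases (Theorems~\ref{thm-dim0}, \ref{thm-dim1}, \ref{thm-dim2}). Your argument, however, is built around the dimension-$2$ picture in two essential places: the first step invokes Fact~\ref{homspace}(2) and sharp $2$-transitivity of $\Gm(\C)\ltimes\Ga(\C)$ to identify the stabiliser of $(a_1,a_2)$ in $\tilde G$ with $\ker\pi$, and the descent step uses ``weak $\C$-orthogonality of $p^{(2)}$'' to deduce that $K=F\langle a_1,a_2\rangle$ has the same constants as $F$ — but $p^{(2)}$ is \emph{not} weakly $\C$-orthogonal when the binding group has dimension $\le 1$, and $p$ itself is not weakly $\C$-orthogonal in dimension $0$, so $G$ does not even act transitively there. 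One can rescue the stabiliser claim case-by-case (regularity in dimension $1$; generic triviality of stabilisers for a finite group acting faithfully on a strongly minimal set in dimension $0$), but then the argument about the constants of $K$ and the subsequent factorisation in $K[\bar c]$ need to be redone, and your proposal does not address this.

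There are two smaller soft spots worth flagging. First, ``exhibiting $\ker\pi$ as a definable subgroup of $\Gm(\C)$'' presupposes that some $\chi_a$ is injective, which is not automatic (an element of $\ker\pi$ could in principle fix the fibre over $a$ while moving other fibres); the generic-constancy conclusion can still be reached, but via a compactness argument showing that the definable partition of $p(\U)$ by $\{a: \chi_a=\chi\}$ (for $\chi$ ranging over homomorphisms to $\Gm$) has only finitely many nonempty pieces, one of which is cofinite. Second, passing to interalgebraic $\C$-internal types via Lemma~\ref{iai} is needed for $\tilde G$ to be a definable finite-rank group, but after doing so the fibre-over-$a$ description of $\logd^{-1}(p)(\U)$ no longer applies verbatim; this needs to be tracked.

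By contrast, the paper avoids binding groups entirely: it extends $(a_1,a_2)$ to a Morley sequence $(a_i)$ with lifts $(u_i)$, uses almost $\C$-internality and Lemma~\ref{gu1u2} to get a multiplicative dependence $e\,u_1^{\rho_1}\cdots u_n^{\rho_n}=1$ over $\acl(F,\C,a_1,\dots,a_n)$, applies two automorphisms of the Morley sequence to isolate $\alpha u_1^k = u_2^k$, then uses the hypothesis that $p^{(3)}$ is not weakly orthogonal (which holds uniformly in all relevant dimensions) to push $\alpha$ into $\acl(F,\C,a_1,a_2)$, and finally uses conjugate products together with Lemma~\ref{inc} (stable embeddedness) to land in $F\langle a_1,a_2\rangle$. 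This is entirely dimension-agnostic, which is exactly what the three subsequent applications require.
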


\begin{proof}
Extend $(a_1,a_2)$ to a Morley sequence $(a_i:i\geq 1)$ in $p$ and consider a Morley sequence $(u_i:i\geq 1)$ in $\logd^{-1}(p)$ such that $\logd u_i=a_i$ for all $i$.

Since $\logd^{-1}(p)$ is almost $\C$-internal, by Lemma~\ref{aim} there is some $n>0$ such that $u_1\in\acl(F,\C,u_2,\dots,u_n)$.
Since each $\delta u_i=a_iu_i$, this means that $u_1$ is in the field-theoretic algebraic closure of $\{u_2,\dots,u_n\}$ over $\acl(F,\C,a_1,\dots,a_n)$.
Lemma~\ref{gu1u2} therefore applies and we have $e \in\acl(F,\C,a_1,\dots,a_n)$ and integers $\rho_1,\dots,\rho_n$ that are not all zero, such that $e u_1^{\rho_1}\cdots u_n^{\rho_n}=1$.

Without loss of generality, we may assume that $\rho_1\neq 0$ and set $k:=\rho_1$.
Let $\sigma,\tau \in\Aut_F(\U)$ be such that
$$\sigma(u_1,u_2,\dots,u_n)=(u_1, u_3,u_4,\dots,u_{n+1})$$
and
$$\tau(u_1,u_2,\dots,u_n)=(u_2, u_3,u_4,\dots,u_{n+1}).$$
Applying $\sigma$ and $\tau$ to the identity $e u_1^{\rho_1}\cdots u_n^{\rho_n}=1$, we get that
$$\sigma(e) u_1^{k}\prod_{i=2}^n u_{i+1}^{\rho_i}=1$$ and 
$$\tau(e) u_2^{k}\prod_{i=2}^n u_{i+1}^{\rho_i}=1.$$
Hence $\alpha u_1^k=u_2^k$ where $\alpha:=\frac{\sigma(e)}{\tau(e)}$.
But we also have
$$\sigma(a_1,a_2,\dots,a_n)=(a_1,a_3,a_4,\dots,a_{n+1})$$
and
$$\tau(a_1,a_2,\dots,a_n)=(a_2, a_3,a_4,\dots,a_{n+1})$$
so that
$\alpha \in \acl(F,\C,a_1,\dots,a_{n+1})$.
We are given that $p^{(3)}$ is not weakly $\C$-orthogonal, so by minimality $a_3\in\acl(F,\C,a_1,a_2)$.
By indiscernibility, the same holds for all $a_m$ with $m\geq 3$.
Hence $\alpha\in\acl(F,\C,a_1,a_2)$.
And we have $\logd \alpha=k(a_2-a_1)$.
It remains only to push $\alpha$ down into $F\langle a_1,a_2\rangle$

Let $\beta$ be the product of all the distinct $(F,\C,a_1,a_2)$-conjugates of $\alpha$.
Then $\beta\in\dcl(F,\C,a_1,a_2)$ and $\logd(\beta)=nk(a_2-a_1)$ where $n\geq 1$ is the number of distinct conjugates.
Write $\beta=h(c)$ where $h(z)\in F\langle a_1,a_2\rangle(z)$ and $c$ is a tuple of constants.
Then $c$ is a solution to $\logd h(z)=nk(a_2-a_1)$, and so by Lemma~\ref{inc} there must be a solution with co-ordinates in $F\langle a_1,a_2\rangle^{\alg}\cap\C$, say~$c'$.
So $h(c')\in F\langle a_1,a_2\rangle^{\alg}$ and $\logd(h(c'))=nk(a_2-a_1)$.
Now, letting $w$ be the product of the $F\langle a_1,a_2\rangle$-conjugates  of $h(c')$, then $w\in F\langle a_1,a_2\rangle$ and $\logd(w)=mnk(a_2-a_1)$ where $m\geq 1$ is the number of distinct  $F\langle a_1,a_2\rangle$-conjugates  of $h(c')$.
That is, $w$ along with the nonzero integer $mnk$ witnesses the truth of the proposition.
\end{proof}

\begin{theorem}
\label{thm-dim2}
Suppose $F$ is an algebraically closed differential field and $p\in S_1(F)$ is a minimal $\C$-internal type whose binding group is $2$-dimensional.
If $\logd^{-1}(p)$ is almost $\C$-internal then it splits.
\end{theorem}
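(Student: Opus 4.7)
The plan is to combine Proposition~\ref{w3} with the $2$-dimensional affine structure of the binding group $G = \Aut_F(p/\C) \cong \Gm(\C) \ltimes \Ga(\C)$. First, the dimension-$2$ hypothesis implies, via Corollary~\ref{dimliason-cor}, that $p^{(2)}$ is weakly $\C$-orthogonal while $p^{(3)}$ is not, so Proposition~\ref{w3} applies: for a Morley pair $(a_1,a_2) \models p^{(2)}$ we obtain an $F$-definable function $h$ and a nonzero integer $k$ with $w := h(a_1,a_2) \in F\langle a_1,a_2\rangle$ and $\logd w = k(a_2-a_1)$. Splitting of $\logd^{-1}(p)$ reduces to producing $v \in F\langle a\rangle$ and $\gamma \in F$ with $\logd v = ka+\gamma$, since then $u^k = v(a) \cdot (u^k/v(a))$ is a splitting with $\logd(u^k/v(a)) = -\gamma \in F$. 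Equivalently, one seeks to factor $w$ as $C \cdot v(a_2)/v(a_1)$ with $C \in F$ and $v \in F\langle a\rangle$, the scalar $C$ being forced into $F$ by the weak $\C$-orthogonality of $p^{(2)}$.

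To manufacture such a $v$, I would pass to the nonforking extension $q := p|_{F\langle a_1\rangle}$. By Corollary~\ref{dimliason-cor} the binding group of $q$ is $1$-dimensional, and in the affine-action model it is identified with the stabilizer of $a_1$ in $G$, which is isomorphic to $\Gm(\C)$. Lemma~\ref{gmaction}(b) then produces $V \in F\langle a_1,a_2\rangle \setminus F\langle a_1\rangle^{\alg}$ with $\logd V \in F\langle a_1\rangle$. One thus has two structured elements of $F\langle a_1,a_2\rangle$: $w$ with $\logd w = k(a_2 - a_1)$, and $V$ with $\logd V \in F\langle a_1\rangle$. An alternative but parallel construction applies Lemma~\ref{normal} together with Lemma~\ref{quotientliason} to $p^{(2)}$ (on which $G$ acts regularly by sharp $2$-transitivity) and then Lemma~\ref{gmaction}(b) to extract a similar auxiliary element, possibly after base change by finitely many constants.

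The hard part will be to combine $w$ and $V$ via Galois-theoretic symmetry arguments --- swapping $a_1 \leftrightarrow a_2$, iterating the construction along a longer Morley sequence, and exploiting the stationarity and weak $\C$-orthogonality of $p^{(2)}$ --- to disentangle the $a_1$- and $a_2$-dependence of $w$ and produce the single-variable element $v$. A useful framework is the $\C$-valued $2$-cocycle $c(a_1,a_2,a_3) := w(a_1,a_2)\,w(a_2,a_3)/w(a_1,a_3)$: the factorization $w = C\,v(a_2)/v(a_1)$ exists precisely when this cocycle is cohomologically trivial in an appropriate sense, and one expects the affine structure of $G$ (in contrast to the $\operatorname{PSL}_2$ case of Theorem~\ref{3dim}, where counterexamples arise) to force exactly this triviality. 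Any residual descent from an intermediate constant extension back to $F$ should then follow from Lemma~\ref{inc} and the algebraic closedness of $F$.
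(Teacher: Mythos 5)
Your setup is right and matches the paper: Proposition~\ref{w3} produces $w \in F\langle a_1,a_2\rangle$ with $\logd w = k(a_2-a_1)$, and you correctly identify that splitting reduces to showing $w$ factors as (constant in $F$) $\cdot v(a_2)/v(a_1)$ for some $\delta$-rational $v$ over $F$ --- equivalently, that the ``cocycle'' $g(a_1,a_2)g(a_2,a_3)g(a_3,a_1)$ is trivial. But this is exactly where the proof has to do its work, and your proposal leaves it as a gap. ``Galois-theoretic symmetry arguments'' and ``exploiting the affine structure'' name the direction without supplying the argument; there is no reason, from what you have written, that the cocycle --- which is a priori just a constant in $F(a_1,a_2,a_3)\cap\C$ --- must be in $F$, let alone equal to $1$.

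The auxiliary element $V$ you manufacture from the $\Gm(\C)$-stabilizer of $a_1$ via Lemma~\ref{gmaction}(b) does not obviously connect to $w$. What you get is $\logd V = \lambda(a_1) \in F\langle a_1\rangle$ for some $\delta$-rational $\lambda$, but nothing forces $\lambda$ to be the linear form $x\mapsto kx + \text{const}$ that you need. The paper instead works directly with the cocycle: after normalizing $g$ so that $g(x,y)g(y,x)=1$, it introduces two concrete elements of the binding group in its affine-linear incarnation --- the translation $\sigma$ by $a_2-a_1$ and the involution $\tau\colon x\mapsto -x+\ell a_2-(\ell-2)a_1$ --- applies them to the constant $g_{1,2}g_{2,\ell}g_{\ell,1}$, and deduces that it equals $\pm 1$. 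After fixing the sign, a specialization argument using the algebraic closedness of $F$ (choosing an $F$-point of the curve $\locus(a_3,\delta a_3/F)$) converts $g(a_1,a_2)=h_1(a_1)/h_2(a_2)$ into the required form with $h=h_1h_2$. These two steps --- the explicit $\sigma,\tau$ computation and the $F$-point specialization --- are the content of the proof; your proposal identifies the target but does not reach it, and I do not see how to complete it along the $V$-route without essentially redoing the cocycle computation anyway.
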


\begin{proof}
Our strategy is to show that for some nonzero integer $k$ and some $\delta$-rational function $h$ over $F$, if $a\models p$ then $k a+\logd h(a)\in F$.
From this a splitting would follow: Let $u\models \logd^{-1}(p)$ with $a=\logd (u)$.
Set $\displaystyle w_1:=\frac{1}{h(a)}$ and $\displaystyle w_2:=u^{k}h(a)$.
Then $u^{k}=w_1w_2$, $w_1\in F\langle a\rangle$, and $\logd(w_2)=k a+\logd h(a)\in F$, witnessing the splitting of $\logd^{-1}(p)$.

Since the binding group $G:=\Aut_F(p/\C)$ is $2$-dimensional, it follows from Fact~\ref{homspace} that there is a definable field structure, say $\K$, on $p(\U)$, induced by a definable bijection with $\C$, and that with respect to this field structure the action of $G$ is that of $\Gm(\K)\ltimes\Ga(\K)$ acting by affine linear transformations.
Note that the action is $2$-transitive.
Hence, in particular, if $a_1\neq a_2$ realise $p$ then $(a_1,a_2)\models p^{(2)}$.

Fix $(a_1,a_2)\models p^{(2)}$.
By Proposition~\ref{w3} there exist a nonzero integer $k$ and $\delta$-rational function $g(x,y)$ over $F$ such that $\logd g(a_1,a_2)=k(a_2-a_1)$.
Replacing $g(x,y)$ by $\displaystyle\frac{g(x,y)}{g(y,x)}$, and $k$ by $2k$, we may assume that $g(a_1,a_2)=g(a_2,a_1)^{-1}$.

Consider now the element $\sigma\in G$ which is the affine linear transformation that translates by ``$a_2-a_1$" -- here the translation and subtraction is in the sense of the field $\K$.
Note that $\sigma(a_1)=a_2$.
Extend $(a_1,a_2)$ to an infinite sequence by setting $a_{\ell+1}:=\sigma^\ell a_1$ for all $\ell\geq 1$.
Note that as the elements in this sequence are all distinct (as $a_1\neq a_2$), we have that $(a_i,a_j)\models p^{(2)}$ for all $i\neq j$.
For notational convenience, let us write $g_{i,j}:=g(a_i,a_j)$.
In particular,  for all $\ell>2$, $g_{1,2}g_{2,\ell}g_{\ell,1}$ is a constant since it has logarithmic derivative zero.

We claim that in fact
$g_{1,2}g_{2,\ell}g_{\ell,1}=\pm 1$.
Observe that $g_{1,2}g_{2,3}\cdots g_{\ell-1,\ell}g_{\ell,1}\in\C$ for all $\ell>2$ as well.
Hence it is fixed by $\sigma$ and we get
\begin {eqnarray*}
g_{1,2}g_{2,3}\cdots g_{\ell-1,\ell}g_{\ell,1}
&=&
\sigma(g_{1,2}g_{2,3}\cdots g_{\ell-1,\ell}g_{\ell,1})\\
&=&
g_{2,3}g_{3,4}\cdots g_{\ell,\ell+1}g_{\ell+1,2}
\end{eqnarray*}
and so, dividing by $g_{2,3}\cdots g_{\ell-1,\ell}$, we get $g_{1,2}g_{\ell,1}=g_{\ell,\ell+1}g_{\ell+1,2}$,
and so,
\begin{equation}
\label{applysigma}
g_{1,2}g_{2,\ell}g_{\ell,1}=g_{2,\ell}g_{\ell,\ell+1}g_{\ell+1,2}.
\end{equation}
On the other hand, let $\tau\in G$ be the affine linear transformation
$$``x\mapsto -x+\ell a_2-(\ell-2)a_1"$$
where again the linear operations here are in the sense of the field $\K$.
Then we see that $\tau$ swaps $a_1$ with $a_{\ell+1}$, and swaps $a_2$ with $a_\ell$.
So applying $\tau$ to the constant $g_{1,2}g_{2,\ell}g_{\ell,1}$ we get
$g_{1,2}g_{2,\ell}g_{\ell,1} = g_{\ell+1,\ell}g_{\ell,2}g_{2,\ell+1}$, and recalling that $g_{i,j}=g_{j,i}^{-1}$ we deduce that
 \begin{equation}
 \label{applytau}
g_{1,2}g_{2,\ell}g_{\ell,1}=(g_{2,\ell}g_{\ell,\ell+1}g_{\ell+1,2})^{-1}
\end{equation}
Equations~(\ref{applysigma}) and~(\ref{applytau}) together imply that $g_{1,2}g_{2,\ell}g_{\ell,1}=g_{1,2}g_{2,\ell}g_{\ell,1}^{-1}$, so that
$$g_{1,2}g_{2,\ell}g_{\ell,1}=\pm 1$$
for all $\ell>2$, as desired.

We may assume that $g_{1,2}g_{2,\ell}g_{\ell,1}=1$ for infinitely many $\ell>2$, as the other case can be handled similarly.
That is, infinitely many realisations of $p$ satisfy
$$g(a_1,a_2)g(a_2,x)g(x,a_1)=1.$$
By minimality, the generic realisation over $F\langle a_1,a_2\rangle$ satisfies it too.
That is, renaming $a_3$, if we take $(a_1,a_2,a_3)\models p^{(3)}$ then $g(a_1,a_2)g(a_2,a_3)g(a_3,a_1)=1$.

Since $g(a_3,a_1)^{-1}=g(a_1,a_3)$, we get that  $\displaystyle g(a_1,a_2)=\frac{g(a_1,a_3)}{g(a_2,a_3)}$.
Now, $p$ being minimal and almost $\C$-internal, it is of order one, and hence, for $i=1,2$, we can write $g(a_i,a_3)=H_i(a_3,\delta a_3)$ where $H_i(y,z)$ is a rational function over $F\langle a_i\rangle$.
That is, $(a_3,\delta a_3)$ is a solution to the algebraic equation
\begin{equation}
\label{algeq}
g(a_1,a_2)=\frac{H_1(y,z)}{H_2(y,z)}
\end{equation}
over $F\langle a_1,a_2\rangle$.
The algebraic locus of $(a_3,\delta a_3)$ over $F$ is a plane curve $C$, and it agrees with the algebraic locus over $F\langle a_1,a_2\rangle$ (as $a_3$ is generic over $F\langle a_1,a_2\rangle$).
Hence all but finitely many points on $C$ are solutions to~(\ref{algeq}).
In particular, as $F$ is algebraically closed and $C$ is defined over $F$, there is an $F$-point of $C$, say $(u,v)$, satisfying~(\ref{algeq}).
Writing $H_i(u,v)=h_i(a_i)$ for some $\delta$-rational function $h_i$ over $F$, we have that $\displaystyle g(a_1,a_2)=\frac{h_1(a_1)}{h_2(a_2)}$.
By total indiscernibility we have also $\displaystyle g(a_2,a_1)=\frac{h_1(a_2)}{h_2(a_1)}$.
But then,
\begin{eqnarray*}
g(a_1,a_2)^2
&=&
g(a_1,a_2)g(a_2,a_1)^{-1}\\
&=&
\frac{h_1(a_1)h_2(a_1)}{h_2(a_2)h_1(a_2)}.
\end{eqnarray*}
Letting $h=h_1h_2$ and taking logarithmic derivatives we get
$$2k(a_2-a_1)=\logd h(a_1)-\logd h(a_2),$$
and hence $2ka_2+\logd h(a_2)=2ka_1+\logd h(a_1)$.
As $a_1,a_2$ are independent over $F$, this element must be in $F$.
That is, $2ka+\logd h(a)\in F$ for all $a\models p$.
As explained in the first paragraph of the proof, this implies that $\logd^{-1}(p)$ splits.
\end{proof}

The following example shows that Theorem~\ref{thm-dim2} is not vacuous.

\begin{example}
\label{exdim2}
Let $F=\mathbb Q(t)^{\alg}$ where $\delta t=1$.
Consider $\delta x=tx+1$ and let $p$ be the generic type of this equation over $F$.
Then $p$ is a minimal $\C$-internal type whose binding group is $2$-dimensional.
\end{example}

\begin{proof}
Any equation of the form $\delta x=ax+b$ with $a,b\in F$, being an (inhomogeneous) linear equation of order~$1$, has generic type minimal and $\C$-internal.
Now, if the equation has no solution in $F$, its binding group will be the semidirect product of the binding group of the associated linear homogeneous equation $\delta x=ax$ with $\Ga(\C)$.
Indeed, see~\cite[$\S$5.4]{jinthesis} for a direct computation, but it also follows from differential Galois theory -- see for example the proof of Proposition~2.1 of~\cite{singerberman}.
In our case, the homogeneous equation $\delta x=tx$ has binding group $\Gm(\C)$.
So it suffices to show that $\delta x=tx+1$ has no solutions in $F$, as then
$\Aut_F(p/\C)=\Gm(\C)\ltimes\Ga (\C)$.

Suppose $\gamma\in F$ is a solution.
Let $\epsilon$ be the sum of the conjugates of $\gamma$ over $\mathbb Q^{\alg}(t)$.
Then $\epsilon=f(t)$ for some rational function $f$ over $\mathbb Q^{\alg}$,
and
$f'(t)=tf(t)+\ell$.
Writing $f(t)=\frac{P(t)}{Q(t)}$ with $P,Q\in \mathbb Q^{\alg}[t]$ coprime,
a straightforward degree computation shows that $Q$ must be nonconstant.
On the other hand, computing $\frac{f'(t)}{f(t)}$ yields
$$\frac{P'(t)}{P(t)}-\frac{Q'(t)}{Q(t)}=t+\ell\frac{Q(t)}{P(t)}.$$
But this is impossible as every root of $Q$ is a (simple) pole of the left-hand side but not a pole of the right.
\end{proof}

\medskip
\subsection{The dimension $1$ case}
\label{sect-1dim}

\noindent
In this section we focus on when the binding group is of dimension $1$.
Under the additional assumption that it is not an elliptic curve, we are able to show that $\logd^{-1}(p)$ being almost $\C$-internal does imply that it splits.
This is Theorem~\ref{thm-dim1} below.
Actually, we work in the slightly more general setting where we drop the assumption that $p$ is $\C$-internal and instead impose a condition, labelled~$(*)$ in~\ref{thm-dim1}, which amounts to saying that $p$ is interalgebraic with a $\C$-internal type whose binding group is either $\Ga(\C)$ or $\Gm(\C)$.
This increased generality will be used in our intended application, namely for the proof of Theorem~B in Section~\ref{sect-main}.

We begin with the following elementary fact about polynomial algebra.

\begin{lemma}
\label{polyadd}
Suppose $F$ is a field and $g(x,y)\in F(x,y)$ is nonzero.
Let $\hat g(x,y):=g(x,x+y)$ and write $\displaystyle \hat g(x,y)=\frac{\sum_{i=0}^m\alpha_i(y)x^i}{\sum_{i=0}^n\beta_i(y)x^i}$ with $\alpha_0,\dots, \alpha_m, \beta_0,\dots,\beta_n\in F(y)$ and both $\alpha_m(y)$ and $\beta_n(y)$ nonzero.
If $x_1,x_2,x_3$ are indeterminates and $u_1:=x_2-x_1, u_2:=x_3-x_2, u_3:=x_1-x_3$, then
$$g(x_1,x_2)g(x_2,x_3)g(x_3,x_1)=\frac{\alpha_m(u_1)\alpha_m(u_2)\alpha_m(u_3)\big(x_1^{3m}+P(x_1)\big)}{\beta_n(u_1)\beta_n(u_2)\beta_n(u_3)\big(x_1^{3n}+Q(x_1)\big)}$$
where $P(x)$ and $Q(x)$ are polynomials over $F(u_1,u_2,u_3)$ of degree strictly less than $3m$ and $3n$ respectively.
\end{lemma}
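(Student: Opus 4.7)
The plan is to reduce the three-factor product to a rational expression in the single variable $x_1$ by exploiting the identity $u_1+u_2+u_3=0$, and then read off the top-degree terms.

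First, using the defining relation $g(a,b)=\hat g(a,b-a)$, I would rewrite the three factors as
\begin{align*}
g(x_1,x_2) &= \hat g(x_1,u_1),\\
g(x_2,x_3) &= \hat g(x_2,u_2) = \hat g(x_1+u_1,u_2),\\
g(x_3,x_1) &= \hat g(x_3,u_3) = \hat g(x_1-u_3,u_3),
\end{align*}
where the last rewriting uses $x_3=x_1+u_1+u_2=x_1-u_3$. Each of these is now expressed as a rational function whose coefficients lie in $F(u_1,u_2,u_3)=F(u_1,u_2)$ and whose ``polynomial variable'' is $x_1$.

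Next, I would view each factor with respect to $x_1$ and track its leading behaviour. The hypothesis on $\hat g$ says that $\hat g(x,y)$, as a polynomial in $x$, has numerator of degree $m$ with leading coefficient $\alpha_m(y)$ and denominator of degree $n$ with leading coefficient $\beta_n(y)$. The substitutions $x \mapsto x_1$, $x \mapsto x_1+u_1$, $x \mapsto x_1-u_3$ are all translations by constants relative to $x_1$, so each preserves the degree in $x_1$ and the leading coefficient. Hence the three numerators are polynomials in $x_1$ of degree $m$ with leading coefficients $\alpha_m(u_1),\alpha_m(u_2),\alpha_m(u_3)$, and the three denominators are polynomials of degree $n$ with leading coefficients $\beta_n(u_1),\beta_n(u_2),\beta_n(u_3)$, all with coefficients in $F(u_1,u_2,u_3)$.

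Finally, multiplying the three factors, the resulting numerator is a polynomial in $x_1$ of degree $3m$ with leading coefficient $\alpha_m(u_1)\alpha_m(u_2)\alpha_m(u_3)$, and the resulting denominator has degree $3n$ with leading coefficient $\beta_n(u_1)\beta_n(u_2)\beta_n(u_3)$. Factoring these leading coefficients out of the numerator and denominator yields precisely the claimed form, with the tail polynomials $P(x_1)$ and $Q(x_1)$ lying in $F(u_1,u_2,u_3)[x_1]$ and of degrees strictly less than $3m$ and $3n$ respectively. There is essentially no obstacle here: the argument is pure bookkeeping once one notes that $u_1+u_2+u_3=0$ allows all three $\hat g$-arguments to be written as translates of $x_1$ by elements of $F(u_1,u_2,u_3)$.
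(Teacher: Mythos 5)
Your proposal is correct and follows essentially the same route as the paper's own proof: rewrite each factor as $\hat g$ evaluated at a translate of $x_1$ by an element of $F(u_1,u_2,u_3)$, observe that translation preserves the $x_1$-degree and leading coefficient of both numerator and denominator, and then multiply. The paper spells out the three factor identities as explicit displayed claims, but the underlying bookkeeping (including the use of $x_3 = x_1 - u_3$, equivalently $u_1+u_2+u_3=0$) is the same.
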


\begin{proof}
We first claim that 
$\displaystyle g(x_1,x_2)=\frac{\alpha_m(u_1)\big(x_1^{m}+P_1(x_1)\big)}{\beta_n(u_1)\big(x_1^{n}+Q_1(x_1)\big)}$
where $P_1(x)$ and $Q_1(x)$ are polynomials over $F(u_1)$ of degree strictly less than $m$ and $n$ respectively.
Indeed,
\begin{eqnarray*}
g(x_1,x_2)
&=&
g(x_1,x_1+u_1)\\
&=&
\hat g(x_1,u_1)\\
&=&
\frac{\alpha_m(u_1)\big(x_1^m+\sum_{i=0}^{m-1}\frac{\alpha_i(u_1)}{\alpha_m(u_1)}x_1^i\big)}{\beta_n(u_1)\big(x_1^n+\sum_{i=0}^{n-1}\frac{\beta_i(u_1)}{\beta_n(u_1)}x_1^i\big)}.
\end{eqnarray*}

Next we claim that 
$\displaystyle g(x_2,x_3)=\frac{\alpha_m(u_2)\big(x_1^{m}+P_2(x_1)\big)}{\beta_n(u_2)\big(x_1^{n}+Q_2(x_1)\big)}$
where $P_2(x)$ and $Q_2(x)$ are polynomials over $F(u_1,u_2)$ of degree strictly less than $m$ and $n$ respectively.
Indeed,
\begin{eqnarray*}
g(x_2,x_3)
&=&
g(x_1+u_1,x_1+u_1+u_2)\\
&=&
\hat g(x_1+u_1,u_2)\\
&=&
\frac{\alpha_m(u_2)\big((x_1+u_1)^m+\sum_{i=0}^{m-1}\frac{\alpha_i(u_2)}{\alpha_m(u_2)}(x_1+u_1)^i\big)}{\beta_n(u_2)\big((x_1+u_1)^n+\sum_{i=0}^{n-1}\frac{\beta_i(u_2)}{\beta_n(u_2)}(x_1+u_1)^i\big)}.
\end{eqnarray*}
The second claim follows by noting that $(x_1+u_1)^i$ is of the form $x_1^i+\alpha(x_1)$ where $\alpha(x)$ is a polynomial over $F(u_1)$ of degree $<i$.

Thirdly, we claim that
$\displaystyle g(x_3,x_1)=\frac{\alpha_m(u_3)\big(x_1^{m}+P_3(x_1)\big)}{\beta_n(u_3)\big(x_1^{n}+Q_3(x_1)\big)}$
where $P_3(x)$ and $Q_3(x)$ are polynomials over $F(u_1,u_2,u_3)$ of degree strictly less than $m$ and $n$ respectively.
Indeed,
\begin{eqnarray*}
g(x_3,x_1)
&=&
g(x_1-u_3,x_1)\\
&=&
\hat g(x_1-u_3,u_3)\\
&=&
g_m(u_3)\big((x_1-u_3)^m+\sum_{i=0}^{m-1}\frac{g_i(u_3)}{g_m(u_3)}(x_1-u_3)^i\big)
\end{eqnarray*}
and we finish as in the second claim.

Multiplying the three proven identities together yields the Lemma.
\end{proof}

The following is the multiplicative analogue of the above lemma.

\begin{lemma}
\label{polymult}
Suppose $F$ is a field and $g(x,y)\in F(x,y)$ is nonzero.
Let $\hat g(x,y):=g(x,xy)$ and write $\displaystyle \hat g(x,y)=\frac{\sum_{i=0}^m\alpha_i(y)x^i}{\sum_{i=0}^n\beta_i(y)x^i}$ with $\alpha_0,\dots, \alpha_m, \beta_0,\dots,\beta_n\in F(y)$ and both $\alpha_m(y)$ and $\beta_n(y)$ nonzero.
If $x_1,x_2,x_3$ are indeterminates and $u_1:=\frac{x_2}{x_1}, u_2:=\frac{x_3}{x_2}, u_3:=\frac{x_1}{x_3}$, then
$$g(x_1,x_2)g(x_2,x_3)g(x_3,x_1)=\frac{\alpha_m(u_1)\alpha_m(u_2)\alpha_m(u_3)\frac{u_1^m}{u_3^m}\big(x_1^{3m}+P(x_1)\big)}{\beta_n(u_1)\beta_n(u_2)\beta_n(u_3)\frac{u_1^n}{u_3^n}\big(x_1^{3n}+Q(x_1)\big)}$$
where $P(x)$ and $Q(x)$ are polynomials over $F(u_1,u_2,u_3)$ of degree strictly less than $3m$ and $3n$ respectively.
\end{lemma}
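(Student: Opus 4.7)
The plan is to mirror the proof of Lemma polyadd exactly, replacing additive shifts $x_2 = x_1 + u_1$, $x_3 = x_2 + u_2$, $x_1 = x_3 + u_3$ by the multiplicative rescalings $x_2 = x_1 u_1$, $x_3 = x_2 u_2$, $x_1 = x_3 u_3$. In each case I will rewrite $g(x_i, x_j)$ as $\hat g(x_i, u_k)$, substitute back to express everything in terms of $x_1$, extract the leading coefficient and the monic polynomial in $x_1$ that remains, and finally multiply the three factors.

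Concretely, from $x_2 = x_1 u_1$ I get
$g(x_1,x_2) = \hat g(x_1,u_1) = \dfrac{\alpha_m(u_1)(x_1^m + P_1(x_1))}{\beta_n(u_1)(x_1^n + Q_1(x_1))}$
with $P_1, Q_1 \in F(u_1)[x_1]$ of degrees less than $m,n$. From $x_3 = x_2 u_2$ and $x_2 = x_1 u_1$ I get $g(x_2,x_3) = \hat g(x_2,u_2)$; the numerator becomes $\sum_{i=0}^m \alpha_i(u_2) u_1^i x_1^i = \alpha_m(u_2)\, u_1^m (x_1^m + P_2(x_1))$ with $\deg P_2 < m$, and symmetrically for the denominator. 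From $x_1 = x_3 u_3$, i.e.\ $x_3 = x_1/u_3$, I get $g(x_3,x_1) = \hat g(x_3,u_3)$; the numerator becomes $\sum_{i=0}^m \alpha_i(u_3) u_3^{-i} x_1^i = \alpha_m(u_3)\, u_3^{-m}(x_1^m + P_3(x_1))$ with $\deg P_3 < m$, and similarly for the denominator.

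Multiplying the three expressions, the leading-coefficient factors collect to
$\alpha_m(u_1)\alpha_m(u_2)\alpha_m(u_3) \cdot u_1^m/u_3^m$
in the numerator and $\beta_n(u_1)\beta_n(u_2)\beta_n(u_3) \cdot u_1^n/u_3^n$ in the denominator, while the three monic pieces $x_1^m + P_i(x_1)$ multiply to a monic polynomial $x_1^{3m} + P(x_1)$ in $x_1$ of degree less than $3m$ (over $F(u_1,u_2,u_3)$), and symmetrically for the denominator. This is exactly the claimed identity.

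I do not anticipate any real obstacle: the proof is routine bookkeeping. The only point that needs care is the origin of the asymmetric factor $u_1^m / u_3^m$, which arises because substituting $x_2^i = u_1^i x_1^i$ and $x_3^i = u_3^{-i} x_1^i$ pulls out $u_1^m$ from the leading term of $g(x_2,x_3)$ and $u_3^{-m}$ from that of $g(x_3,x_1)$, whereas no analogous factor appears for $u_2$ (since $u_2$ enters $g(x_2,x_3)$ only via the second argument of $\hat g$, after the multiplicative substitution).
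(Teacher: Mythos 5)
Your proposal is correct and follows essentially the same route as the paper, which merely states the three factor identities (by analogy with Lemma~\ref{polyadd}) and leaves the verification to the reader. Your computations of $g(x_1,x_2)=\hat g(x_1,u_1)$, $g(x_2,x_3)=\hat g(x_2,u_2)$ with $x_2=u_1x_1$, and $g(x_3,x_1)=\hat g(x_3,u_3)$ with $x_3=x_1/u_3$ fill in exactly the intended details, including the source of the $u_1^m/u_3^m$ factor.
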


\begin{proof}
The analogous claims are:
\begin{eqnarray*}
g(x_1,x_2)
&=&
\frac{\alpha_m(u_1)\big(x_1^{m}+P_1(x_1)\big)}{\beta_n(u_1)\big(x_1^{n}+Q_1(x_1)\big)}\\
g(x_2,x_3)
&=&
\frac{\alpha_m(u_2)\big(x_1^{m}u_1^m+P_2(x_1)\big)}{\beta_n(u_2)\big(x_1^{n}u_1^n+Q_2(x_1)\big)}\\
g(x_3,x_1)
&=&
\frac{\alpha_m(u_3)\big(\frac{x_1^{m}}{u_3^m}+P_3(x_1)\big)}{\beta_n(u_3)\big(\frac{x_1^{n}}{u_3^n}+Q_3(x_1)\big)}
\end{eqnarray*}
We leave the proofs, which are similar to those of Lemma~\ref{polyadd}, to the reader.
\end{proof}

Returning to our differential-algebraic context, we obtain the following main technical result of this section.

\begin{proposition}
\label{aorm}
Let $F$ be an algebraically closed differential field and $\gamma\in F$ such that the formula $\delta x=\gamma$ has no realisations in $F$.
Suppose there exist rational functions $g\in F(x,y)$ and $f\in F(x)$, and a nonzero integer $k$, such that for $b_1,b_2$ independent realisation of $\delta x=\gamma$ over $F$,
$$\logd g(b_1,b_2)=k\big(f(b_2)-f(b_1)\big).$$
Then there exist a rational function $h\in F(y)$ and a nonzero integer $\ell$, such that
$$\ell kf(b)-\logd h(b)\in F$$
whenever $\delta b=\gamma$.

The same result holds for the formula $\logd x=\gamma$ in place of $\delta x=\gamma$.
\end{proposition}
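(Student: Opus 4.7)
The plan is to translate the hypothesis into a formal identity of rational functions in $F(x,y)$ and then perform a pole analysis that determines the shapes of $\hat g$ and $f$, from which the conclusion follows directly. In the additive case, setting $\hat g(x,y):=g(x,x+y)$ and using that $b_2-b_1\in\C$ whenever $b_1,b_2$ satisfy $\delta x=\gamma$, the hypothesis becomes the formal identity
\begin{equation*}
\frac{\gamma\,\hat g_x + \hat g^\delta}{\hat g} \;=\; k\bigl(f(x+y) - f(x)\bigr)
\end{equation*}
in $F(x,y)$ (extended by $\delta x=\delta y=0$), where $\hat g_x$ is the formal partial derivative in $x$ and $\hat g^\delta$ denotes $\delta$ applied to coefficients. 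The multiplicative case is identical with $\hat g(x,y):=g(x,xy)$ and $\gamma x\,\hat g_x$ in place of $\gamma\,\hat g_x$.

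Factoring $\hat g(x,y)=\alpha(y)\prod_i(x-a_i(y))^{n_i}$ over $F(y)^{\alg}$, the left-hand side equals $\delta\alpha/\alpha+\sum_i n_i(\gamma-\delta a_i(y))/(x-a_i(y))$, which has only simple $x$-poles and, in the additive case, no $x$-polynomial part. Matching with the right-hand side forces $f(x)=p_0+\sum_j r_j/(x-c_j)$ with $p_0,c_j\in F$ and $r_j\in F^{\times}$. The hypothesis that $\delta x=\gamma$ has no solution in $F$ is used precisely to rule out any ``spurious'' $a_i(y)$ with $\delta a_i(y)=\gamma$: specialising $y$ to a generic element of $F$ would otherwise produce such an $F$-solution. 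Hence every $a_i(y)$ is either $c_j$ or $c_j-y$, with opposite multiplicities (by matching residues), giving $\hat g(x,y)=\alpha(y)\,H(x)/H(x+y)$ with $H(x):=\prod_j(x-c_j)^{n_j}\in F(x)$; and matching the constant-in-$x$ parts yields $\delta\alpha/\alpha=0$, so $\alpha\in C_F(y)$ where $C_F:=F\cap\C$. Evaluating at $(b_1,b_2)$ with $u:=b_2-b_1\in\C$ gives $\alpha(u)\in\C$, and hence $\logd g(b_1,b_2)=\logd H(b_1)-\logd H(b_2)$. Rearranging the hypothesis produces $\logd H(b_1)+kf(b_1)=\logd H(b_2)+kf(b_2)$, whose common value lies in $F$ by independence and the algebraic closedness of $F$, proving the proposition with $h:=1/H$ and $\ell:=1$.

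The multiplicative case proceeds analogously, but the polynomial-in-$x$ part of the left-hand side now acquires an extra term $\gamma N+\delta\alpha/\alpha$ where $N:=\sum_j n_j$, forcing $\delta\alpha/\alpha=-\gamma N$; after verifying that $f$ has no pole at $0$ (otherwise a genuine pole of the right-hand side at $x=0$ would go unmatched on the left), one obtains $\hat g(x,y)=\alpha(y)\,y^N H(x)/H(xy)$ and an intermediate identity $\rho(b_1)-\rho(b_2)=\gamma N$ for $\rho(b):=\logd H(b)+kf(b)$. Applying this to both orderings $(b_1,b_2)$ and $(b_2,b_1)$ of any independent pair gives $2\gamma N=0$; since $\gamma\neq 0$ (otherwise $1\in F$ would realise $\logd x=\gamma$) we conclude $N=0$ and finish as before. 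The main technical hurdle is executing the pole analysis precisely enough to match every zero and pole of $\hat g$ with those of $f$; Lemmas~\ref{polyadd} and~\ref{polymult} provide a parallel starting point, since the hypothesis applied to three independent realisations gives $g(b_1,b_2)g(b_2,b_3)g(b_3,b_1)\in\C$, and the lemmas' explicit presentation then immediately forces equal numerator and denominator degrees of $\hat g$ in $x$ (i.e.~$m=n$) and supplies the structural constraints that parallel the pole analysis.
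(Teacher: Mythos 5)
Your approach is genuinely different from the paper's. The paper works model-theoretically: it forms the triple product $g(b_1,b_2)g(b_2,b_3)g(b_3,b_1)$ (which lies in $\C$), uses Lemmas~\ref{polyadd}/\ref{polymult} together with weak $\C$-orthogonality to write this constant as $\theta(e,u_1)\theta(e,u_2)\theta(e,u_3)$ with $\theta$ over $F\cap\C$, pushes the parameter $e$ into $F\cap\C$ by a linear-disjointness argument, normalizes $g$ to satisfy a cocycle identity, and finally specializes $b_3$ to a point of $F$. You instead promote the hypothesis to a literal identity of rational functions in $F(x,y)$ and analyse poles. This is a shorter, more transparent route and yields $\ell=1$ rather than the paper's $\ell=2$. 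Both use Lemmas~\ref{polyadd}/\ref{polymult}, but for you they are a degree sanity check, whereas in the paper they carry the structural load.

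The additive case is essentially right, modulo two small gaps in the ``spurious root'' step: the specialization of $y$ must be at a point of $F^\delta=F\cap\C$, not merely a generic point of $F$ (one needs $\delta y_0=0$ for $\delta$ to commute with evaluation), and if $a_i\in\overline{F(y)}\setminus F(y)$ one should pass to the trace over $F(y)$ before specializing. Both are easy to repair.

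The multiplicative case contains a genuine computational error. The polynomial-in-$x$ part of the left side --- its value as $x\to\infty$ --- equals $\gamma\bigl(\sum_i n_i\bigr)+\delta\alpha/\alpha$, with the sum over \emph{all} $x$-roots and $x$-poles of $\hat g$ with signed multiplicity. After matching, the $a_i$'s are the $c_j$'s (multiplicity $n_j$), the $c_j/y$'s (multiplicity $-n_j$), and possibly $0$ (multiplicity $m$, carrying \emph{zero} residue in the multiplicative case and hence unconstrained by pole matching). So $\sum_i n_i=m$, not $N=\deg H$. Concretely: $f(x)=1/(x-c)$ and $g(x,y)=(x-c)/(y-c)$ give $\hat g=(x-c)/(xy-c)$ with $N=1$, yet the left side is a proper fraction in $x$ with polynomial part $0$ and $\alpha=1/y\in F^\delta(y)$, directly falsifying $\delta\alpha/\alpha=-\gamma N$; your swap-trick deduction $N=0$ is false here as well. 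What actually saves the argument is that the correct constraint $\gamma m+\delta\alpha/\alpha=0$ forces $m=0$: if $m\neq 0$, writing $\alpha=c\,\tilde\alpha$ with $\tilde\alpha\in F^\delta(y)$ and $\logd c=-\gamma m$ and then taking an $m$th root $c^{-1/m}\in F$ would realize $\logd x=\gamma$ in $F$. With $m=0$ one gets $\alpha\in F^\delta(y)$ and the extra constant in $\logd g(b_1,b_2)$ vanishes with no swap at all. So your final identity $\rho(b_1)=\rho(b_2)$ is correct, but the route through $\gamma N+\delta\alpha/\alpha=0$ is unsound, you need to account for (and then rule out) the possible $x^m$ factor of $\hat g$, and the swap trick should be dropped.
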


\begin{proof}
Suppose $q$ is the generic type of $\delta x=\gamma$.
Note that $q$ is minimal, $\C$-internal, and weakly $\C$-orthogonal.
Let $(b_1,b_2,b_3)\models q^{(3)}$.
Our assumption on $g$ implies that $g(b_1,b_2)g(b_2,b_3)g(b_3,b_2)$ has logarithmic derivative zero and hence is a constant.
Our first goal is to modifying $g$ so that $g(b_1,b_2)g(b_2,b_3)g(b_3,b_1)=1$.

Note that $b_1,b_2,b_3$ are algebraic indeterminates over $F$, as are $u_1:=b_2-b_1, u_2:=b_3-b_2, u_3:=b_1-b_3$.
Applying Lemma~\ref{polyadd} we get that
$$g(b_1,b_2)g(b_2,b_3)g(b_3,b_1)=\frac{\alpha_m(u_1)\alpha_m(u_2)\alpha_m(u_3)\big(b_1^{3m}+P(b_1)\big)}{\beta_n(u_1)\beta_n(u_2)\beta_n(u_3)\big(b_1^{3n}+Q(b_1)\big)}$$
where $P(x)$ and $Q(x)$ are polynomials over $F(u_1,u_2,u_3)$ of degree strictly less than $3m$ and $3n$, respectively.
But note that $u_1,u_2,u_3\in \C$ as they are differences of elements having the same derivative.
It follows from the above identity that $\displaystyle \frac{\big(b_1^{3m}+P(b_1)\big)}{\big(b_1^{3n}+Q(b_1)\big)}\in\dcl(F,\C)$.
The fact that $q$ is weakly orthogonal to $\C$ means that $b_1\notin\acl(F,\C)$, and so the only possibility is that $m=n$ and $\displaystyle \frac{\big(b_1^{3m}+P(b_1)\big)}{\big(b_1^{3n}+Q(b_1)\big)}=1$.
We therefore conclude that
$$g(b_1,b_2)g(b_2,b_3)g(b_3,b_1)=\frac{\alpha_m(u_1)\alpha_m(u_2)\alpha_m(u_3)}{\beta_m(u_1)\beta_m(u_2)\beta_m(u_3)}$$
Since $\frac{\alpha_m}{\beta_m}$ is a rational function over $F$ we can write $\frac{\alpha_m(y)}{\beta_m(y)}=\theta(e,y)$ where $\theta$ is a rational function over $F\cap\C$ and $e$ is a tuple from $F$.
We therefore have
\begin{equation}
\label{sepu}
g(b_1,b_2)g(b_2,b_3)g(b_3,b_1)=\theta(e,u_1)\theta(e,u_2)\theta(e,u_3).
\end{equation}

On the other hand, we claim that
\begin{equation}
\label{inb}
g(b_1,b_2)g(b_2,b_3)g(b_3,b_1)\in F^\delta(b_1,b_2,b_3)^{\alg}.
\end{equation}
Indeed, note that $g(b_1,b_2)g(b_2,b_3)g(b_3,b_1)\in F(b_1,b_2,b_3)\cap\C$.
Now, $F(b_1,b_2,b_3)\cap\C$ is a transcendence degree at most two extension of $F(b_1)\cap\C$ since $F(b_1,b_2,b_3)/F(b_1)$ is of transcendence degree two.
But $u_1$ and $u_3$ are algebraically independent over $F(b_1)$ and contained in $F(b_1,b_2,b_3)\cap\C$.
Hence
$$g(b_1,b_2)g(b_2,b_3)g(b_3,b_1)\in\acl(F(b_1)\cap\C, u_1,u_3)\subseteq \acl(F(b_1)\cap\C, b_1,b_2,b_3).$$
Finally, as $q$ is weakly orthogonal to $\C$, $F(b_1)\cap\C=F\cap\C$.

From~(\ref{sepu}) and~(\ref{inb}) we get that the formula $\phi(z)$ given by
$$g(b_1,b_2)g(b_2,b_3)g(b_3,b_1)=\theta(z,u_1)\theta(z,u_2)\theta(z,u_3)$$
is in the $L_{\operatorname{ring}}$-type of $e$ over $F^\delta(b_1,b_2,b_3)^{\alg}$ where $F^\delta:= F\cap\C$.
As $(b_1,b_2,b_3)$ is of transcendence degree $3$ over $F$, we have that $F^\delta(b_1,b_2,b_3)^{\alg}$ is linearly disjoint from $F$ over the algebraically closed field $F^\delta$.
Since $e$ is a tuple from $F$, the formula $\phi(z)$ must have realisation in $F^\delta$, say $\tilde e$.
So each $\theta(\tilde e,u_i)\in\C$ and
$$\frac{g(b_1,b_2)}{\theta(\tilde e,u_1)}\frac{g(b_2,b_3)}{\theta(\tilde e,u_2)}\frac{g(b_3,b_1)}{\theta(\tilde e,u_3)}=1$$
Let $\displaystyle \tilde g(x,y):=\frac{g(x,y)}{\theta(\tilde e,y-x)}$.
Then we have that $\tilde g$ is a rational function over $F$,
$\logd \tilde g(b_1,b_2)=\logd g(b_1,b_2)=k\big(f(b_2)-f(b_1)\big)$,
and
$\tilde g(b_1,b_2)\tilde g(b_2,b_3)\tilde g(b_3,b_1)=1$.
That is, replacing $g$ by $\tilde g$, we can now assume that $g(b_1,b_2)g(b_2,b_3)g(b_3,b_1)=1$.

Next we modify $g$ and $k$ so as to be able to assume that $g(b_1,b_2)^{-1}=g(b_2,b_1)$.
By total indiscernibility of $(b_2,b_3)$ over $F(b_1)$ we have that
$g(b_1,b_3)g(b_3,b_2)g(b_2,b_1)=1$ also.
Hence, letting $\displaystyle \tilde g(x,y):=\frac{g(x,y)}{g(y,x)}$ we retain the fact that $\tilde g(b_1,b_2)\tilde g(b_2,b_3)\tilde g(b_3,b_1)=1$
but now have $\tilde g(b_1,b_2)\tilde g(b_2,b_1)=1$.
Moreover, indiscernibility of $(b_1,b_2)$ over $F$ implies that $\logd\tilde g(b_1,b_2)=2k\big(f(b_2)-f(b_1)\big)$.
So replacing $g$ with $\tilde g$ and $k$ with $2k$, we obtain the desired property that $g(b_1,b_2)^{-1}=g(b_2,b_1)$.

It follows that $\displaystyle g(b_1,b_2)=\frac{g(b_3,b_2)}{g(b_3,b_1)}$.
Since $b_3$ is transcendental over $F(b_1,b_2)$, the equation $\displaystyle g(b_1,b_2)=\frac{g(x,b_2)}{g(x,b_1)}$ has cofinitely many realisations.
In particular it is realised in $F$, say by $e\in F$.
Hence $h(y):=g(e,y)$ is a rational function over $F$ and $\displaystyle g(b_1,b_2)=\frac{h(b_2)}{h(b_1)}$.
It follows that
$$k\big(f(b_2)-f(b_1)\big)=\logd g(b_1,b_2)=\logd h(b_2)-\logd h(b_1)$$
so that 
$kf(b_2)-\logd h(b_2)=kf(b_1)-\logd h(b_1)$.
Since $b_1$ and $b_2$ are independent over $F$ this forces $kf(b)-\logd h(b)\in F$ for all $b\models q$, as desired.

The other case, when $q$ is the generic type of $\logd x=d$, is proved similarly using Lemma~\ref{polymult} rather than~\ref{polyadd}.
We leave the details to the reader.
 \end{proof}

The following theorem was obtained originally, and independently, by the first author in his PhD thesis~\cite[$\S$5]{jinthesis}.

\begin{theorem}
\label{thm-dim1}
Suppose $F$ is an algebraically closed differential field and $p\in S_1(F)$ is a weakly $\C$-orthogonal minimal type satisfying
\begin{itemize}
\item[($*$)]
For $a\models p$ there is $b\in F\langle a\rangle\setminus F$ such that either $\delta b\in F$ or $\logd b\in F$.
\end{itemize}
If $\logd^{-1}(p)$ is almost $\C$-internal then it splits.
\end{theorem}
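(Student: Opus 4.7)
\noindent
The plan is to reduce to Proposition~\ref{aorm} by passing to the auxiliary type $q := \tp(b/F)$, where $b$ is the element witnessing $(*)$. Because $F$ is algebraically closed and $b \in F\langle a\rangle \setminus F$, the formula $\delta x = \gamma$ (or $\logd x = \gamma$) has no realisations in $F$, so $q$ is the generic type of that formula and is $\C$-internal with binding group $\Ga(\C)$ or $\Gm(\C)$ by Lemma~\ref{gmaction}. Since $p$ and $q$ are interalgebraic over $F$, Corollary~\ref{dimliason-cor} yields that $p^{(2)}$, and hence $p^{(3)}$, is not weakly $\C$-orthogonal. Combined with the hypothesis that $\logd^{-1}(p)$ is almost $\C$-internal, I can then apply Proposition~\ref{w3} to obtain, for $(a_1, a_2) \models p^{(2)}$, an element $w \in F\langle a_1, a_2\rangle$ and a nonzero integer $k$ with $\logd w = k(a_2 - a_1)$.

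Next, I would let $b_i \in F\langle a_i\rangle$ witness $(*)$ for $a_i$; the additive case $\delta b_i = \gamma$ is handled explicitly, the multiplicative one being analogous via Lemma~\ref{polymult}. Since $a_i$ is algebraic over $F\langle b_i\rangle = F(b_i)$, the element $w$ lies in an algebraic extension of $F(b_1, b_2)$ of degree $N^2$, where $N := [F\langle a\rangle : F(b)]$. Forming the field-theoretic norm
\[
W := N_{F\langle a_1, a_2\rangle/F(b_1, b_2)}(w) \in F(b_1, b_2)
\]
and summing log-derivatives over Galois conjugates gives $\logd W = kN\bigl(f(b_2) - f(b_1)\bigr)$, where $f(y) \in F(y)$ is the trace of $a$ over $F(b)$. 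Proposition~\ref{aorm} then produces a rational function $h \in F(y)$ and a nonzero integer $\ell$ such that $m f(b) - \logd h(b) \in F$ for every $b \models q$, where $m := \ell k N$.

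The final step is to deduce the splitting of $\logd^{-1}(p)$. Writing $f(b) = R(a)$ and $h(b) = H(a)$ as $\delta$-rational expressions in $a$ over $F$, the conclusion reads $m R(a) - \logd H(a) \in F$. When $N = 1$ this is immediate: $R(a) = a$, and then $u^m = H(a) \cdot (u^m/H(a))$ is the desired splitting with $w_1 := H(a) \in F\langle a\rangle$ and $\logd(u^m/H(a)) = ma - \logd H(a) \in F$. The main obstacle is the case $N > 1$, where $R(a) = a + T(a)$ with $T(a)$ the sum of the other $F(b)$-conjugates of $a$; one must either show that the almost $\C$-internality of $\logd^{-1}(p)$ forces $N = 1$ (which appears plausible from the examples I have considered, where inequivalent covers like $\delta a = 1/(2a)$ violate almost $\C$-internality of the log-pullback), or else construct $G \in F\langle a\rangle$ with $\logd G = m T(a)$, so that $w_1 := H(a)/G$ and the exponent $m$ witness the splitting. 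I expect this concluding step to exploit essentially the weak $\C$-orthogonality of $p$ together with the Galois-theoretic structure of $F\langle a\rangle$ over $F(b)$.
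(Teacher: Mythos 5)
Your proposal tracks the paper's argument faithfully up through the application of Proposition~\ref{aorm}: passing to $q=\tp(b/F)$, noting $q^{(2)}$ (hence $p^{(3)}$) fails weak $\C$-orthogonality, extracting $w$ and $k$ from Proposition~\ref{w3}, and then taking a conjugate product to land in $F(b_1,b_2)$ --- your use of the field norm $N_{F\langle a_1,a_2\rangle/F(b_1,b_2)}(w)$ is a cosmetic variant of the paper's explicit double product $\prod_{r,s}\alpha(a_{1,r},a_{2,s})$ and yields the same kind of identity $\logd W=kN'\bigl(f(b_2)-f(b_1)\bigr)$.

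The gap you flag at the end, however, is a genuine one, and neither of your two suggested routes is carried through. Route (a), showing $N=1$, cannot work in general: it does hold in the additive case, since a surjection from $\Aut_F(p/\C)$ onto $\Ga(\C)$ with finite kernel must be an isomorphism and hence the $\phi$-equivariant cover $a\mapsto b$ is a bijection; but in the multiplicative case $\Aut_F(p/\C)\to\Gm(\C)$ can be the $n$-th power isogeny $[n]:\Gm\to\Gm$, making the cover $n$-to-$1$. Route (b), constructing $G\in F\langle a\rangle$ with $\logd G=mT(a)$, is the crux and is left entirely unaddressed. The paper sidesteps this by a partial-product device that you do not have: instead of working with the full $m^2$-fold product, it considers $\prod_{s=1}^m\alpha(a_1,a_{2,s})^\ell/h(b_2)$. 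Because $b_2$ is an additive (resp. multiplicative) translate of $b_1\in F\langle a_1\rangle$ by a constant, this element lies in $F\langle a_1\rangle(c)$ for some constant tuple $c$, and a short computation shows $\ell km\,a_1+\logd\beta(a_1,c)=e\in F$. Lemma~\ref{inc} together with the weak $\C$-orthogonality of $p$ (which forces $F\langle a_1\rangle\cap\C=F^{\delta}$) then lets one replace $c$ by $\tilde c\in F$, producing $\theta\in F\langle x\rangle$ with $na_1+\logd\theta(a_1)\in F$ for $n:=\ell km$; the splitting of $\logd^{-1}(p)$ follows as you already observe in the $N=1$ case. You would need this --- or some equivalent construction of $G$ --- to complete the argument; as it stands, your proof does not go through.
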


\begin{remark}
If $\logd^{-1}(p)$ is almost $\C$-internal then so is $p$.
So by Lemma~\ref{iai} $p$ is interalgebraic with a $\C$-internal type $p'$.
Condition~($*$) says that $\Aut_F(p'/\C)$ is either $\Ga(\C)$ or $\Gm(\C)$.
That is, the binding group is of dimension $1$ but is not an elliptic curve.
\end{remark}

\begin{proof}
Fix $b$ is as in~($*$) and let $q:=\tp(b/F)$.
Note that $q$ is weakly $\C$-orthogonal.
On the other hand, as any two realisations of $q$ either differ by a constant or one is a constant multiple of the other, $q^{(2)}$ is not weakly $\C$-orthogonal.
Since $a\in\acl(F,b)$ by minimality, $p^{(2)}$ is also not weakly orthogonal to $\C$.
In particular, $p^{(3)}$ isn't, and Proposition~\ref{w3} applies.
That is, assuming $\logd^{-1}(p)$ is almost $\C$-internal, and fixing $(a_1,a_2)\models p^{(2)}$, we get $w\in F\langle a_1,a_2\rangle$ and nonzero integer $k$ such that $\logd w=k(a_2-a_1)$.

Let $b_1,b_2$ be such that $(a_1b_1,a_2b_2)$ is a Morley sequence in $\tp(ab/F)$.
Note that if in fact $a\in\dcl(F,b)$ then we would have $a_i=f(b_i)$ and $w=g(b_1,b_2)$ for some $f$ and $g$, and we could apply Proposition~\ref{aorm} directly.
But as it is, we have to first deal with the conjugates of $a$.

Write $w=\alpha(a_1,a_2)$ for some $\delta$-rational function $\alpha$ over $F$.
Let $\{a_{i,1},\dots,a_{i,m}\}$ be the set of $(F,b_i)$-conjugates of $a_i$, for $i=1,2$.
In particular, we have a $\delta$-rational function $f(x)$ over $F$ such that $\sum_{r=1}^ma_{i,r}=f(b_i)$.
(Note that $f$ does not depend on $i=1,2$ as by automorphisms the same function will work.)
Also, $\prod_{r=1}^m\prod_{s=1}^m\alpha(a_{1,r},a_{2,s})=g(b_1,b_2)$ for some $\delta$-rational function $g(x,y)$ over~$F$.
And we have that
\begin{eqnarray*}
\logd g(b_1,b_2)
&=&
\logd\left(\prod_{r=1}^m\prod_{s=1}^m\alpha(a_{1,r},a_{2,s})\right)\\
&=&
k\left(\sum_{s=1}^ma_{2,s}-\sum_{r=1}^ma_{1,r}\right)\\
&=&
k\big(f(b_2)-f(b_1)\big).
\end{eqnarray*}
The second equality above uses that for any $r,s\in\{1,\dots,m\}$, the pair $(a_{1,r},a_{2,s})$ realises $p^{(2)}$ and hence $\logd\alpha(a_{1,r},a_{2,s})=k(a_{2,s}-a_{1,r})$.
Indeed, independence follows from the fact that each $a_{i,r}$ is algebraic over $F$ and $b_i$, and $b_1$ is independent of $b_2$ over $F$.
In any case, we see that Proposition~\ref{aorm} applies.
Hence, there exist a rational function $h$ over $F$ and a nonzero integer $\ell$ such that $\logd h(b)=\ell kf(b)-e$ for some $e\in F$.

Consider, now, $\displaystyle \frac{\prod_{s=1}^m\alpha(a_1,a_{2,s})^\ell}{h(b_2)}$.
Note that this is in $F\langle a_1\rangle(c)$ for some constant~$c$.
Indeed, it is clearly in $F\langle a_1,b_2\rangle$, but $b_2$ is an additive or multiplicative translate of $b_1$ by a constant and $b_1\in F\langle a_1\rangle$.
So we can write this element as $\beta(a_1,c)$ where $\beta\in F\langle x\rangle(z)$.
Now compute
\begin{eqnarray*}
\ell k ma_1+\logd\big(\beta(a_1,c)\big)
&=&
\ell k ma_1+\ell\left(\sum_{s=1}^m\logd\alpha(a_1,a_{2,s})\right)-\logd h(b_2)\\
&=&
\ell k ma_1+\ell k\left(\sum_{s=1}^m(a_{2,s}-a_1)\right)-\ell kf(b_2)+e\\
&=&
e\\
&\in&
F.
\end{eqnarray*}
By Lemma~\ref{inc} we can find $\tilde c\in F\langle a_1\rangle\cap \C$ such that $\ell k ma_1+\logd\big(\beta(a_1,\tilde c)\big)=e$.
But by weak $\C$-orthogonality, $F\langle a_1\rangle\cap \C=F^\delta$.
So,  $\theta(x):=\beta(x,\tilde c)$ is over $F$ and we have that $na_1+\logd\theta(a_1)\in F$ for $n:=\ell k m$.

From this, as in the case of Theorem~\ref{thm-dim2}, a splitting follows:
Let $u\models \logd^{-1}(p)$ with $a=\logd (u)$.
Set $\displaystyle w_1:=\frac{1}{\theta(a)}$ and $\displaystyle w_2:=u^{n}\theta(a)$.
Then $u^{n}=w_1w_2$, $w_1\in F\langle a\rangle$, and $\logd(w_2)=na+\logd \theta(a)\in F$.
\end{proof}

\begin{example}
That Theorem~\ref{thm-dim1} is not vacuous is witnessed for example by taking $p$ to be the generic type of $\delta x=x^2$ over $F:=\mathbb Q^{\alg}$.
Then $p$ is minimal and weakly $\C$-orthogonal, and if $a\models p$ then $b:=\frac{1}{a}$ satisfies $\delta b=-1\in F$.
So condition~($*$) holds.
We argued in the Introduction that in this case $\logd^{-1}(p)$ is $\C$-internal (and that it splits).

Another example would be to take the formula $\delta x=x(x-1)$ instead.
If $a\models p$ then this time $b:=\frac{a-1}{a}$ has the property that $\logd b=1\in F$, and so again~($*$) holds but in the other, multiplicative, way.
One can show that in this case too $\logd^{-1}(p)$ is $\C$-internal (and that it splits).
\end{example}

\bigskip
\subsection{The dimension $0$ case}
\label{sect-0dim}

\begin{theorem}
\label{thm-dim0}
Suppose $F$ is an algebraically closed differential field and $p\in S_1(F)$ is a minimal type that is not weakly orthogonal to $\C$.
If $\logd^{-1}(p)$ is almost $\C$-internal then it splits.
\end{theorem}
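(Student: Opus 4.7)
The plan, as in Theorems~\ref{thm-dim2} and~\ref{thm-dim1}, is to establish the existence of a nonzero integer $k$ and a rational function $H\in F(x)$ with $ka-\logd H(a)\in F$ for every $a\models p$. From this the splitting follows immediately by taking $w_1:=H(a)\in F\langle a\rangle$ and $w_2:=u^k/H(a)$, giving $u^k=w_1w_2$ with $\logd w_2=ka-\logd H(a)\in F$.

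Since $p$ is minimal and not weakly $\C$-orthogonal, every $a\models p$ lies in $\acl(F,\C)$, and so in particular $p^{(3)}$ is not weakly $\C$-orthogonal. Combining this with the hypothesis that $\logd^{-1}(p)$ is almost $\C$-internal, Proposition~\ref{w3} applies and yields, for $(a_1,a_2)\models p^{(2)}$, a rational function $W\in F(x,y)$ and a nonzero integer $k$ such that $\logd W(a_1,a_2)=k(a_2-a_1)$.

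The key step would be to show that $W$ can be taken to have the separated form $W(x,y)=A(x)B(y)$ for some $A,B\in F(x)$, possibly after multiplying $W$ by an element of $F(a_1,a_2)\cap\C$, which leaves the logarithmic derivative unchanged. To do this I would extend to $(a_1,a_2,a_3)\models p^{(3)}$ and note that both $W(a_1,a_2)W(a_2,a_1)$ and $W(a_1,a_2)W(a_2,a_3)W(a_3,a_1)$ have vanishing logarithmic derivative, hence lie in $\C$. Manipulating these two cocycle-type identities together with the total indiscernibility of Morley sequences should yield that $W(a_1,x)/W(a_2,x)$ is independent of generic $x$ (with $a_1,a_2,x$ forming a Morley sequence), which is precisely the condition that forces the multiplicative separation $W(x,y)=A(x)B(y)$. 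Once separation is secured, $\logd A(a_1)+\logd B(a_2)=k(a_2-a_1)$ together with the independence of $a_1,a_2$ over $F$ implies that $\logd A(a)+ka$ is a common element of $F$ for all $a\models p$; setting $H:=1/A$ then completes the argument.

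The main obstacle is establishing the multiplicative separation of $W$. In contrast to the proof of Proposition~\ref{aorm}, weak $\C$-orthogonality is unavailable here since $a_1\in\acl(F,\C)$ by hypothesis; as a consequence, the two ``constants'' above may lie in $F(a_1,a_2)\cap\C$ and $F(a_1,a_2,a_3)\cap\C$ rather than in $F$, so one cannot simply normalize them to $1$ as in the weakly $\C$-orthogonal case. I would handle this by tracking these factors carefully, using the freedom to modify $W$ multiplicatively by elements of $F(a_1,a_2)\cap\C$, and invoking Lemma~\ref{inc} together with the algebraic closedness of $F$ to descend them appropriately.
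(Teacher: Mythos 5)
Your reduction via Proposition~\ref{w3} to finding a one-variable $h$ with $ka+\logd h(a)\in F$ is correct and matches the paper. But the route you propose from there — forcing the multiplicative separation $W(x,y)=A(x)B(y)$ via cocycle identities — has a genuine gap, and it is one you essentially flag yourself. The cocycle manipulations that achieve separation in Proposition~\ref{aorm} rely crucially on weak $\C$-orthogonality (to force the ``unknown'' factor from Lemma~\ref{polyadd} to be trivial and to descend parameters by linear disjointness of $F^\delta(b_1,b_2,b_3)^{\alg}$ from $F$ over $F^\delta$), while the analogous step in Theorem~\ref{thm-dim2} additionally exploits the explicit affine action of the two-dimensional binding group. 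In the present case neither ingredient is available: $a_1\in\acl(F,\C)$, the constants arising from $W(a_1,a_2)W(a_2,a_3)W(a_3,a_1)$ and $W(a_1,a_2)W(a_2,a_1)$ may genuinely live in $F\langle a_1,a_2,a_3\rangle\cap\C\setminus F$, and Lemma~\ref{inc} only brings such constants down to $\acl(\text{parameters})\cap\C$, not to $F$. ``Tracking these factors carefully'' is not a proof, and I do not see how to close the hole along this line.

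The paper avoids the separation problem entirely by turning the hypothesis that $p$ is not weakly $\C$-orthogonal into an \emph{advantage}. Writing $w=g(a_1,a_2)$, indiscernibility gives that $a_3$ satisfies $\logd g(a_1,z)+ka_1=\logd g(a_2,z)+ka_2$ over $F\langle a_1,a_2\rangle$, hence cofinitely many points of the strongly minimal set $D$ (whose generic type is $p$) satisfy it. Since $p$ is not isolated, $D(F)$ is \emph{infinite}, so one simply picks $e\in D(F)$ realizing the formula and sets $h(x):=g(x,e)$; independence of $a_1,a_2$ over $F$ then gives $\logd h(a)+ka\in F$ directly, with no separation needed. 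This is the key observation your proposal misses: in the dimension-$0$ case the failure of weak orthogonality is exactly what supplies enough $F$-points to specialise the second variable.
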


\begin{proof}
Let $(a_1,a_2,a_3)\models p^{(3)}$.
Since $p^{(3)}$ is not weakly orthogonal to $\C$, if $\logd^{-1}(p)$ is almost $\C$-internal then by Proposition~\ref{w3} there is $w\in F\langle a_1,a_2\rangle$ and a nonzero integer $k$ such that $\logd w=k(a_2-a_1)$.
Write $w=g(a_1,a_2)$ for some $\delta$-rational function $g(x,y)$.
So $\logd g(a_1,a_3)+ka_1=\logd g(a_2,a_3)+ka_2$.
That is, $a_3$ realises the formula $\phi(z)$ over $F\langle a_1,a_2\rangle$ given by
$\logd g(a_1,z)+ka_1=\logd g(a_2,z)+ka_2$.
Since $p$ is of order one (being minimal and nonorthogonal to $\C$) it is the generic type of some strongly minimal set $D$ over $F$.
As $a_3$ is generic in $D$ over $F\langle a_1,a_2\rangle$, we have that cofinitely many points of $D$ realise $\phi(z)$.
But as $p$ is not even weakly $\C$-orthogonal, it cannot be isolated.
Hence $D(F)$ is infinite.
Let $e$ be an $F$-point of $D$ that realises $\phi(z)$.
Then, setting $h(x):=g(x,e)$, we have that $h$ is $\delta$-rational over $F$ and $\logd h(a_1)+ka_1=\logd h(a_2)+ka_2$.
Since $a_1$ and $a_2$ are independent over $F$, we have that $\logd h(a)+ka\in F$ for all $a\models p$.
From this, as we have seen twice before, the splitting of $\logd^{-1}(p)$ follows:
For $u\models \logd^{-1}(p)$ with $a=\logd (u)$, set $\displaystyle w_1:=\frac{1}{h(a)}$ and $\displaystyle w_2:=u^{k}h(a)$.
Then $u^{k}=w_1w_2$, $w_1\in F\langle a\rangle$, and $\logd(w_2)=ka+\logd h(a)\in F$.
\end{proof}

\begin{example}
We give an example of a type satisfying the assumptions of Theorem~\ref{thm-dim0}.
Consider again the formula $\delta x=x^2$ but this time let $p$ be the generic type over $F:=\mathbb Q(t)^{\alg}$ where $\delta t=1$.
Then $p$ is not weakly $\C$-orthogonal, as witnessed by the fact that $\frac{1}{a}+t\in\C$ for $a\models p$.
But it is still the case that $\logd^{-1}(p)$ is $\C$-internal (and splits).
\end{example}

\bigskip
\section{The proofs of Theorems~A and~B}
\label{sect-main}

\noindent
We put things together and deduce our main theorems.

\begin{thmA}
Suppose $F$ is an algebraically closed differential field and $f\in F(x)$ is such that the generic type $p\in S_1(F)$ of $\delta x=f(x)$ is $\C$-internal with binding group not of dimension~$3$.
Then $\logd^{-1}(p)$ is almost $\C$-internal if and only if it splits.
\end{thmA}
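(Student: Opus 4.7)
The plan is to deduce Theorem~A from the three main results of Section~\ref{sect-not3dim}, which together handle the remaining possibilities for $d:=\dim \Aut_F(p/\C)$. The ``if'' direction has already been established in the discussion immediately following the definition of splitting in Section~\ref{sect-splitting}, so the task reduces to the converse: assuming $\logd^{-1}(p)$ is almost $\C$-internal, I want to show it splits, and I will do this by case analysis on $d\in\{0,1,2\}$.

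The cases $d=0$ and $d=2$ will be dispatched immediately. For $d=0$, Corollary~\ref{dimliason-cor} tells me that $p$ itself is not weakly $\C$-orthogonal, and Theorem~\ref{thm-dim0} applies. For $d=2$, I simply invoke Theorem~\ref{thm-dim2}. The real content lies in the case $d=1$, where I aim to apply Theorem~\ref{thm-dim1}. That theorem requires $p$ to be weakly $\C$-orthogonal (immediate from Corollary~\ref{dimliason-cor} since $d=1$) and to satisfy condition~$(*)$: the existence of some $b\in F\langle a\rangle\setminus F$ with $\delta b\in F$ or $\logd b\in F$.

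To produce such a $b$, I will rerun the Kolchin-theorem portion of the proof of Lemma~\ref{gmaction}. Since $d=1$ and the binding group acts regularly on $p(\U)$ by Fact~\ref{homspace}, that argument exhibits $b\in F\langle a\rangle\setminus F^{\alg}$ satisfying one of three alternatives: $\delta b\in F$, or $\logd b\in F$, or else a Weierstrass-type equation $(\delta b)^2=\lambda b(b^2-1)(b-\kappa)$ for some nonzero $\lambda,\kappa\in F$. The first two alternatives yield condition~$(*)$ immediately.

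The main obstacle, and the only place in the entire argument where the specific shape $\delta x=f(x)$ of the equation is used, is to rule out the third (elliptic) alternative. I will do this by a genus argument. Since the equation is of order one, $a$ is transcendental over $F$ with $\delta a=f(a)\in F(a)$, so $F\langle a\rangle=F(a)$, and both $b$ and $\delta b$ are rational functions of $a$ over $F$. The pair $(b,\delta b)$ therefore defines a rational map from $\mathbb A^1_F$ (parametrised by $a$) into the elliptic curve $E:y^2=\lambda x(x^2-1)(x-\kappa)$, which extends to a morphism $\mathbb P^1_F\to\overline{E}$ and must be constant, as there is no nonconstant morphism from a curve of genus~$0$ to one of genus~$1$. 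This forces $b\in F\subset F^{\alg}$, contradicting $b\notin F^{\alg}$. So alternative~(3) is impossible, condition~$(*)$ holds, and Theorem~\ref{thm-dim1} supplies the desired splitting.
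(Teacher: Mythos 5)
Your proposal is correct and the overall skeleton matches the paper exactly: dispose of the ``if'' direction by the discussion following the definition of splitting, then split the converse by $\dim\Aut_F(p/\C)\in\{0,1,2\}$ and invoke Theorems~\ref{thm-dim0}, \ref{thm-dim1}, \ref{thm-dim2}, with the only substance in the $d=1$ case being verification of condition~$(*)$. The one place you diverge from the paper's write-up is in how the elliptic scenario is eliminated. The paper argues \emph{before} invoking Lemma~\ref{gmaction}: a $1$-dimensional binding group must be (definably isomorphic to) $\Ga(\C)$, $\Gm(\C)$ or $E(\C)$, and in the last case regularity makes $p(\U)$ a torsor for $E(\C)$, so over an extension $K$ one would get $K\langle a\rangle=K\langle e\rangle$ with $K\langle a\rangle=K(a)$ of genus $0$ (using $\delta a=f(a)$) equal to the genus-$1$ field $K(e)$, a contradiction; having thus pinned down the group as $\Ga(\C)$ or $\Gm(\C)$, the paper then cites the relevant direction of Lemma~\ref{gmaction} to get $(*)$. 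You instead re-open the proof of Lemma~\ref{gmaction}, run the Kolchin argument directly to get $b\in F(a)\setminus F$ satisfying one of the three normal forms, and rule out the third alternative $(\delta b)^2=\lambda b(b^2-1)(b-\kappa)$ by observing that $(b,\delta b)\in F(a)^2$ would define a nonconstant morphism $\mathbb P^1\to\overline E$, impossible by genus. The two are the same obstruction---the order-one equation forces $F\langle a\rangle=F(a)$ to have genus $0$---expressed once as a mismatch of function-field genera and once as the nonexistence of a rational map between curves; yours is marginally more self-contained in that it avoids the step of trivialising the torsor over an extension $K$, but the effort is comparable.
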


\begin{proof}
We have already observed in Section~\ref{sect-splitting}, when introducing splitting, that if $p$ is (almost) $\C$-internal and $\logd^{-1}(p)$ splits then $\logd^{-1}(p)$ is almost $\C$-internal.
For the converse, assume $\logd^{-1}(p)$ is almost $\C$-internal.
Since $p$ is minimal, Fact~\ref{homspace} implies that $\dim\Aut_F(p/\C)\leq 3$.

If $\Aut_F(p/\C)$ is $0$-dimensional then $p$ is not weakly $\C$-orthogonal and Theorem~\ref{thm-dim0} applies, telling us that $\logd^{-1}(p)$ splits.

If $\Aut_F(p/\C)$ is $1$-dimensional then it is $\Ga(\C)$ or $\Gm(\C)$ or $E(\C)$ for some elliptic surve $E$ over the constants.
We first note that the last case is impossible.
Indeed, it would imply that $p(\U)$ is definably isomorphic to $E(\C)$ over some extension $K$ of the base field $F$.
In particular, taking $a$ to realise the nonforking extension of $p$ to~$K$ and taking $e$ to be a generic point of $E(\C)$ over $K$, we would have $K\langle a\rangle=K\langle e\rangle$.
But $K\langle a\rangle=K(a)$ as $\delta a=f(a)$, and this is a genus $0$ function field over~$K$ as $a$ is a transcendental singleton.
Whereas $K\langle e\rangle=K(e)$ as $e$ is a tuple of constants, and $K(e)$ being the rational function field of the elliptic curve is of genus $1$.
So, $\Aut_F(p/\C)$ is either $\Ga(\C)$ or $\Gm(\C)$.
By Lemma~\ref{gmaction}, condition~($*$) of Theorem~\ref{thm-dim1} is satisfied, and that theorem yields the splitting of $\logd^{-1}(p)$.

Finally, if $\Aut_F(p/\C)$ is $2$-dimensional then Theorem~\ref{thm-dim2} applies and tells us that $\logd^{-1}(p)$ splits.
\end{proof}

We specialise to constant parameters and obtain the following application.

\begin{thmB}
Suppose $F$ is an algebraically closed field of constants and $f\in F(x)$.
The rational vector field defined by
$\begin{Bmatrix}
y'=xy\\
x'=f(x)
\end{Bmatrix}$
is almost internal to the constants if and only if the following conditions on $f$ are satisfied:
\begin{itemize}
\item[(i)]
$f\neq 0$, and
\item[(ii)]
$\displaystyle \frac{1}{f}=\frac{d}{dx}(g)$ or $\displaystyle \frac{1}{f}=\frac{c\frac{d}{dx}(g)}{g}$ for some $c\in F$ and $g\in F(x)$, and
\item[(iii)]
$\displaystyle \frac{kx-e}{f}=\frac{\frac{d}{dx}(h)}{h}$ for some nonzero $k\in\mathbb Z$, $e\in F$ and $h\in F(x)$.
\end{itemize}
\end{thmB}

\begin{proof}
Outside of $y=0$ the given vector field corresponds to the second order differential equation $\displaystyle \delta\left(\logd y\right)=f\left(\logd y\right)$.
This is the equation for $\logd^{-1}(D)$ where $D\subseteq\mathbb A^1$ is defined by $\delta x=f(x)$.
The vector field being almost internal to the constants is therefore equivalent to $\logd^{-1}(D)$ being almost $\C$-internal.
Note that if $f=0$ then $D=\C$ and $\logd^{-1}(\C)$ is well known to not be almost $\C$-internal.
Hence, it suffices to assume $f\neq 0$ and show that $\logd^{-1}(D)$ is almost $\C$-internal if and only if~(ii) and~(iii) hold.

Assume~(ii) and~(iii) hold.
Condition~(ii) already ensures that $D$ is almost $\C$-internal, see~\cite[Theorem 2.8]{mcgrail}.
We show that $\logd^{-1}(D)$ is also almost $\C$-internal.

Let $u\in\logd^{-1}(D)$ and set $a:=\logd u$.
If $a\in F$ then $\tp(u/F)$ is $\C$-internal as $\logd$ has cosets of $\Gm(\C)$ as its fibres.
So we may assume $a\notin F$, and in particular the rational functions $f$ and $h$ are defined and nonzero at $a$.

Let $w_1:=h(a)$.
Since $a\in D$, $\tp(a/F)$, and hence $\tp(w_1/F)$, is almost $\C$-internal.

Let $\displaystyle w_2:=\frac{u^k}{h(a)}$.
We compute that
\begin{eqnarray*}
\logd w_2
&=&
ka-\logd h(a)\\
&=&
ka-\left(\frac{\dx(h)(a)\delta(a)}{h(a)}\right)\ \ \ \ \text{ as $h$ is over constant parameters}\\
&=&
ka-\left(\frac{\dx(h)(a)f(a)}{h(a)}\right)\ \ \ \ \text{ as $a\in D$}\\
&=&
ka-\left(\frac{ka-e}{f(a)}f(a)\right) \ \ \ \ \ \ \text{ by~(iii)}\\
&=&
e
\end{eqnarray*}
and so $\logd w_2\in F$.
Hence $\tp(w_2/F)$ is $\C$-internal.

Since $u^k=w_1w_2$, we have that $\tp(u/F)$ is almost $\C$-internal, as desired.

For the converse, assume that $\logd^{-1}(D)$ is almost $\C$-internal.
In particular, $D$ is nonorthogonal to $\C$.
This means that after passing to a differentially closed field $K$ extending $F$, which we may assume has $F$ as its field of constants, and letting $a\in D$ be generic over $K$ (i.e., not in $K$), it will be the case that $K(a)$ has new constants -- that is constants not in $F$.
Now, Rosenlicht's theorem~\cite[Proposition 2]{rosenlicht}, tells us exactly that condition~(ii) holds.

Let $p\in S_1(F)$ be the generic type of $D$.
Using~(ii), we show that $p$ satisfies condition~($*$) of Theorem~\ref{thm-dim1}.
Indeed, suppose $\displaystyle \frac{1}{f}=\frac{c\frac{d}{dx}(g)}{g}$ for some $c\in F$ and $g\in F(x)$.
Then we compute, that for $a\models p$,
\begin{eqnarray*}
\logd g(a)
&=&
\frac{\dx(g)(a)\delta(a)}{g(a)}\ \ \ \ \text{ as $g$ is over constant parameters}\\
&=&
\frac{\dx(g)(a)}{g(a)}f(a)\ \ \ \ \text{ as $a\in D$}\\
&=&
\frac{1}{cf(a)}f(a)\\
&=&
\frac{1}{c}.
\end{eqnarray*}
So $b:=g(a)$ satisfies $\logd b\in F$.
If, on the other hand, (ii) takes the form of $\displaystyle \frac{1}{f}=\frac{d}{dx}(g)$, a similar computation shows that $\delta b\in F$.
Hence~$(*)$ is satisfied.

Applying Theorem~\ref{thm-dim1} or Theorem~\ref{thm-dim0} to $p$, depending on whether $p$ is weakly $\C$-orthogonal or not, we have that for $u\models\logd^{-1}(p)$, some integer power of $u$ factors in $\Gm$ as $u^k=w_1w_2$ where $w_1\in\dcl(F,\logd u)$ and $\logd w_2\in F$.
We have that $k\logd u=\logd w_1+\logd w_2$.
Let $a:=\logd u$, and write $w_1=h(a)$ for some $h\in F(x)$.
(Note that as $a\in D$, $\dcl(F,a)=F(a)$.)
Let $e:=\logd w_2$.
Then,
\begin{eqnarray*}
\frac{ka-e}{f(a)}
&=&
\frac{k\logd u-\logd w_2}{\delta(a)}\\
&=&
\frac{\logd w_1}{\delta(a)}\\
&=&
\frac{\dx(h)(a)\delta(a)}{h(a)\delta(a)}\\
&=&
\frac{\dx(h)(a)}{h(a)}.
\end{eqnarray*}
Since $a$ is a transcendental, we have that
$\displaystyle \frac{kx-e}{f}=\frac{\dx(h)}{h}$ in $F(x)$, as desired.
\end{proof}

\vfill
\pagebreak

%\bibliographystyle{plain}
%\bibliography{../principal}

\end{document}